\documentclass{amsart}

\usepackage[utf8]{inputenc}
\usepackage[english]{babel}
\usepackage{mathabx}
\usepackage{amsthm}
\usepackage[shortlabels]{enumitem}
\usepackage{amssymb}
\usepackage{amsfonts}
\usepackage{accents}
\usepackage[dvipsnames]{xcolor}
\usepackage{mathtools}
\usepackage{hyperref}
\usepackage{bbm}
\usepackage{amsmath}

\theoremstyle{plain}
\newtheorem{prop}{Proposition}[section]

\newtheorem{teo}[prop]{Theorem}

\newtheorem{lem}[prop]{Lemma}
\newtheorem{cor}[prop]{Corollary}
\newtheorem{theorem}{Theorem}

\newtheorem{corollary}[theorem]{Corollary}

\theoremstyle{definition}

\newtheorem{set}[prop]{Setup}

\newtheorem{rk}[prop]{Remark}

\newtheorem{exa}[prop]{Example}

\newtheorem{quest}[prop]{Question}
\newtheorem{df}[prop]{Definition}

\theoremstyle{remark}

\newcommand{\F}{\mathcal{F}}

\newcommand{\N}{\mathbb{N}}

\newcommand{\R}{\mathbb{R}}
\newcommand{\Z}{\mathbb{Z}}
\newcommand{\G}{\Gamma}

%TAGS

\newcommand{\scl}{\text{\normalfont{scl}}}
\newcommand{\cl}{\text{\normalfont{cl}}}
\newcommand{\X}[1]{\(\mathfrak{X}^{\textup{#1sep}}\)}

\title[Bounded cohomology and scl of verbal wreath products]{bounded cohomology and scl of\\ verbal wreath products}

\author{Elena Bogliolo}
\date{\today.\text{ The results in this paper are part of the Ph.D. thesis of the author}}
\keywords{bounded acyclicity, wreath product, stable commutator length}
\subjclass[2020]{20F65, 20J05, 20E22, 46M18}

\pagestyle{headings}

\begin{document}

\begin{abstract}
We study the bounded cohomology and the stable commutator length of verbal wreath
products $\Gamma \wr^{_W}A$, where $A$ has trivial
bounded cohomology for a sufficiently large class of coefficients.\\
We prove that the stable commutator length always vanishes, and that the bounded cohomology vanishes in positive degrees for some such verbal wreath products; including the standard restricted wreath products (extending a recent result by Monod for lamplighters groups), as well as verbal wreath products arising from n-solvable, $n$-nilpotent, and $k$-Burnside $(k = 2, 3, 4, 6)$ verbal products.\ As an application, we show that every group of type $F_p$ isometrically embeds into a group of type $F_p$ with vanishing bounded cohomology in positive degrees for a large class of coefficients. 
\end{abstract}

\maketitle

\section*{Introduction}

Bounded cohomology of groups encodes many of their algebraic and geometric properties.\ For instance, a celebrated theorem by Johnson \cite{joh} shows that the vanishing of bounded cohomology  of a group $\Gamma$ in all positive degrees, for all \emph{dual} Banach $\Gamma$-modules~$V$, $H_b^{n \geq 1}(\Gamma; V)=0$, is equivalent to the amenability of $\Gamma$. 

A recent breakthrough result by Monod shows that Thompson group $F$ has vanishing bounded cohomology in positive degrees for all separable coefficient modules~\cite{lamp}.\ 
Of course, this class of coefficients is smaller than the one of dual Banach coefficients, but still the previous result brought renewed interest in the understanding of the classes of coefficients for which the vanishing of bounded cohomology holds.\ 
A fundamental result following from Monod's construction is the proof that all restricted permutational wreath products \[\Gamma \wr_X A \coloneqq \big(\bigoplus_{x \in X} \Gamma\big) \rtimes A\] have vanishing bounded cohomology in positive degrees for all separable coefficients provided that $A$ is amenable and the action of $A$ on $X$ has infinite orbits \cite{criterion}.

Given a class \(\mathfrak{X}\) of coefficient modules, we say that a group \(\G\) is \emph{boundedly \(\mathfrak{X}\)-acyclic} if its bounded cohomology vanishes in all positive degrees for every module in~\(\mathfrak{X}\).\ In particular we denote by \X{} the class of separable coefficient modules.\
As we mentioned above, \(\mathfrak{X}^{\textup{dual}}\)-boundedly acyclic groups are amenable groups; moreover, \(\mathfrak{X}^{\textup{all}}\)-boundedly acyclic groups are finite groups \cite[Theorem 3.12]{fri}.\ Monod showed that there exist non-amenable groups that are \X{}-boundedly acyclic \cite[Theorem 3 and Section 4.3]{lamp}.\ We refer the readers to groups with commuting cyclic conjugates \cite{criterion} for other instances of \X{}-boundedly acyclic groups that are not amenable.

In this paper we aim to extend Monod's result on restricted permutational wreath products by removing the constraint on $A$ to be amenable and replacing this requirement with the weaker one of being boundedly acyclic for a large class of coefficients.\ 
This result will enable us to prove the bounded acyclicity of certain iterated wreath products.\ 
One of the main problems in extending Monod's construction, leaving the comfortable setting of \emph{amenable groups}, is that we cannot use the theory of Zimmer-amenable actions in its full power anymore \cite[Chapter 5]{Mon}.\ 
We solve this issue by considering restricted permutational wreath products $\Gamma \wr_X A$ where $A$ is not only boundedly acyclic but has vanishing bounded cohomology in positive degrees for all \emph{semi-separable} coefficients.\ 
This requirement is sufficient to enable us to deal with spectral sequences and acyclic resolutions instead of using the classical theory by Burger and Monod for computing bounded cohomology \cite{BurMon}.\medskip
  
We say that a coefficient \(\G\)-module \(V\) is \emph{semi-separable} if there exists a separable coefficient \(\G\)-module \(U\) and an injective dual \(\G\)-morphism \(i\colon V\to U\).\ Let \X{s} denote the class of all semi-separable coefficient \(\G\)-modules.\
The class of \X{s}-boundedly acyclic groups is knowns to be larger that the class of amenable groups.\ On the other hand we point out that, to this day, there are no known examples of groups that are \(\R\)-boundedly acyclic but not \X{s}-boundedly acyclic (\cite[Problem 4.1]{disp} and  Question \ref{quest}).\

We prove the following:

\begin{theorem}[Theorem \ref{acyclicwr}]\label{A}
    Let \(\Gamma\) and \(A\) be discrete groups.\ Let \(n\geq 1\) and suppose that \(A\) is \X{s}-boundedly \(n\)-acyclic and countable.\
    If \(X\) is a countable \(A\)-set such that all the \(A\)-orbits are infinite, then, 
    the restricted permutational wreath product \(\Gamma\wr_X A\) is \X{s}-boundedly \(n\)-acyclic.
\end{theorem}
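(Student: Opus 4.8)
The plan is to analyze the group $G \coloneqq \Gamma \wr_X A$ through the short exact sequence
\[ 1 \longrightarrow B \longrightarrow G \xrightarrow{\ \pi\ } A \longrightarrow 1, \qquad B \coloneqq \bigoplus_{x\in X}\Gamma, \]
and to show that, in the range $1 \le k \le n$, the bounded cohomology of $G$ with semi-separable coefficients is detected entirely by $A$. First I would fix a semi-separable $G$-module $V$ and observe that its restriction along $\pi$ is again semi-separable over $A$: separability of the underlying Banach space is independent of the action, and an injective dual $G$-morphism $i\colon V\to U$ into a separable module restricts to an injective dual $A$-morphism. I would then set up the Lyndon--Hochschild--Serre spectral sequence for bounded cohomology associated to the extension, whose second page reads $E_2^{p,q} = H_b^p\!\big(A;\, H_b^q(B;V)\big)$ and converges to $H_b^{p+q}(G;V)$. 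Since the paper proposes to work with spectral sequences and acyclic resolutions rather than Zimmer-amenable actions, I would realize this as an explicit double complex of relatively injective modules, keeping track of the semi-separability of every term that appears.

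The two contributing regions of the $E_2$ page are treated by different mechanisms. Along the bottom row $q=0$ one has $E_2^{p,0} = H_b^p(A; V^B)$; the module of $B$-invariants $V^B$ is a dual $A$-submodule of the semi-separable module $V$, hence itself semi-separable over $A$, so the hypothesis that $A$ is \X{s}-boundedly $n$-acyclic yields $E_2^{p,0}=0$ for all $1 \le p \le n$. The remaining rows $q \ge 1$ are where the assumption that every $A$-orbit on $X$ is infinite enters. Fixing $x\in X$, the infinitude of $A\cdot x$ lets me choose elements $t_1,t_2,\dots \in A$ with the points $t_i x$ pairwise distinct, so that the conjugated lamp subgroups $t_i \Gamma_x t_i^{-1} = \Gamma_{t_i x}$ pairwise commute inside $B$. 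This \emph{commuting conjugates} configuration is precisely the structural feature that in the amenable case was supplied by a Zimmer-amenable action; I would use it to run a displacement (Eilenberg-swindle type) argument producing a contracting homotopy on the relevant subcomplex, and thereby show $E_2^{p,q}=0$ whenever $q \ge 1$ and $p+q \le n$.

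Granting the two vanishing statements, the spectral sequence has no nonzero entries in total degrees $1,\dots,n$, so $H_b^k(G;V)=0$ for $1\le k\le n$ and every semi-separable $V$, which is the assertion. The hard part will be the second region: carrying out the swindle on the base group while remaining inside the category of semi-separable coefficients, so that the spectral sequence is available and the acyclicity of $A$ can legitimately be invoked. Concretely, the displacement must be implemented at the level of an acyclic resolution whose terms are $\ell^\infty$-type spaces built from $V$, and one has to verify that these terms stay semi-separable and that the homotopy operators are bounded and compatible with the $A$-action. This bookkeeping---rather than the geometric idea of pushing lamps to infinity---is the genuine obstacle, and it is exactly what the semi-separable hypothesis on $A$ is designed to overcome.
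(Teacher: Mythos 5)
There is a genuine gap, and it sits exactly where you yourself locate ``the genuine obstacle'': the rows \(q\geq 1\). First, a structural problem: Monod's Hochschild--Serre spectral sequence (Theorem \ref{spec}) admits the identification \(E_2^{pq}\cong H_b^p(A;H_b^q(B;V))\) only when the semi-norm on \(H_b^q(B;V)\) is a norm for every \(q\), and for an arbitrary group \(\G\) and an arbitrary semi-separable \(V\) nothing guarantees this. The paper never applies the spectral sequence to \(V\) itself: it first resolves the given module \(E\) by the modules \(L^{\infty}_{_W*}(Y^k,E)\), where \(Y=Y_0^X\) is the Bernoulli \(G\)-space of Setup \ref{setup}, and runs Hochschild--Serre only with these coefficients. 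Since the action of \(B=\bigoplus_{x\in X}\G\) on \(Y\) is Zimmer-amenable (Proposition \ref{zam}) --- so, contrary to your premise, Zimmer-amenability is not discarded; what is dropped is only its use on the quotient via the Burger--Monod theory, which would force \(A\) to be amenable --- each \(L^{\infty}_{_W*}(Y^k,E)\) is relatively injective over \(B\), hence \(H_b^q\big(B;L^{\infty}_{_W*}(Y^k,E)\big)=0\) for \(q>0\) (Corollary \ref{relinj0}); the norm issue disappears and the spectral sequence collapses onto its bottom row, which vanishes in degrees \(1\leq p\leq n\) because \(L^{\infty}_{_W*}(Y^k,E)^B\) is a semi-separable coefficient \(A\)-module (Remark \ref{Hssep}).

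Second, your proposed mechanism for \(E_2^{p,q}=0\) with \(q\geq1\) cannot succeed as described. The coefficient \(H_b^q(B;V)\) is typically nonzero: \(\G\) is arbitrary, and for \(\G=F_2\) and \(V=\R\) the coordinate projection exhibits \(F_2\) as a retract of \(B\), so \(H_b^2(B;\R)\neq 0\). Your commuting conjugates \(t_i\G_x t_i^{-1}\) are conjugates by elements of \(A\), which lie \emph{outside} \(B\), so no internal displacement or swindle can be run on the cochain complex of \(B\); what you actually need is the vanishing of \(H_b^p\big(A;H_b^q(B;V)\big)\) for \(q\geq 1\) --- including, at \(p=0\), the space of \(A\)-invariant classes --- and you offer no mechanism for this, nor is one available in such generality. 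In the paper the infinite-orbit hypothesis enters through a completely different door: it makes the Bernoulli action of \(G\) on \(Y\) highly ergodic (Proposition \ref{erg}), and Monod's ergodicity lemma for semi-separable coefficients (Proposition \ref{erg semi-sep}) then gives \(L^{\infty}_{_W*}(Y^k;E)^G\cong E^G\); combined with the collapse above, the resolution \(0\to E\to L^{\infty}_{_W*}(Y,E)\to\cdots\) computes \(H_b^p(G;E)\) for \(p\leq n\) as the cohomology of \(0\to E^G\to E^G\to\cdots\), which vanishes in positive degrees \cite[Proposition 2.5.4]{m.rap}. So the correct repair of your sketch is not bookkeeping of semi-separability through a swindle on \(B\), but the substitution of \(V\) by an acyclic resolution whose terms kill the rows \(q\geq 1\) outright; only the bottom-row part of your argument (semi-separability of \(V^B\) via Lemma \ref{ssep inv} and the hypothesis on \(A\)) survives unchanged.
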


We emphasize that our approach, in contrast to previous results, allows for degree-wise statements, as stated above.\
This leads to many new examples of boundedly acyclic groups, as we show in Section \ref{5}.\medskip

In recent years there has been an increasing interest in \emph{verbal wreath products}  (denoted by \(\G\wr_X^{_W}A\)) \cite{Brude} and their generalizations, e.g.\ halo products \cite{Halo}.\
In particular Brude and Sasyk studied the following families of restricted verbal wreath products:
\begin{enumerate}
    \item \(j\)-nilpotent for \(j\in \N\);
    \item \(j\)-solvable for \(j\in \N\);
    \item \(k\)-Burnside for \(k\in\{2,3,4,6\}\).
\end{enumerate}

Here we prove that in the cases listed above the restricted verbal wreath product with \X{s}-boundedly \(n\)-acyclic groups is \X{s}-boundedly \(n\)-acyclic (see Section \ref{6} for the definition of verbal wreath products):

\begin{corollary}[Corollary \ref{verbalcase}]\label{B}
    Let \(A\) be a countable \X{s}-boundedly \(n\)-acyclic group and let \(X\) be a countable \(A\)-set with infinite \(A\)-orbits.\ Then, for any group \(\G\) and every \(j\in \N\) and \(k\in \{2,3,4,6\}\), the restricted \(j\)-nilpotent, \(j\)-solvable, and \(k\)-Burnside wreath product \(\G\wr_X^{_W} A\) is \X{s}-boundedly \(n\)-acyclic.
\end{corollary}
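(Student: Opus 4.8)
The plan is to reduce the verbal case to Theorem \ref{A} by realizing the verbal wreath product as an extension of the \emph{ordinary} restricted wreath product by an \emph{amenable} normal subgroup, and then invoking the invariance of bounded cohomology under amenable normal subgroups. Fix the variety $\mathcal V$ in question (nilpotent of class $\le j$, solvable of derived length $\le j$, or of exponent $k$) with verbal subgroup functor $V$, and write $P=\ast_{x\in X}\Gamma_x$ for the free product of copies of $\G$ indexed by $X$, with cartesian subgroup $D=\ker\!\big(P\to\bigoplus_{x\in X}\Gamma_x\big)$. Recall (Section \ref{6}, following \cite{Brude}) that the base of $\G\wr_X^{_W}A$ is the restricted verbal product $N=P/(V(P)\cap D)$, on which $A$ acts by permuting the factors according to its action on $X$. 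Since the projection $P\to\bigoplus_{x\in X}\Gamma_x$ is equivariant under permutations of $X$, it descends to an $A$-equivariant surjection $N\twoheadrightarrow\bigoplus_{x\in X}\Gamma_x$ with kernel $C:=D/(V(P)\cap D)$, hence to a short exact sequence of groups
\[
1\longrightarrow C\longrightarrow \G\wr_X^{_W}A\longrightarrow \G\wr_X A\longrightarrow 1,
\]
in which $C$ is normal (being $A$-invariant and normal in $N$) and the quotient is the ordinary restricted permutational wreath product.

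The key point, which isolates exactly the three families in the statement, is that $C$ is amenable. By the second isomorphism theorem $C\cong D\,V(P)/V(P)$ is a subgroup of the relatively free group $P/V(P)$ of the variety $\mathcal V$, and this quotient is amenable in each case: it is nilpotent of class $\le j$ for the $j$-nilpotent variety, solvable of derived length $\le j$ for the $j$-solvable variety, and of exponent dividing $k$ for the $k$-Burnside variety. In the first two cases amenability is immediate; in the Burnside case one uses that for $k\in\{2,3,4,6\}$ every finitely generated group of exponent $k$ is finite (Burnside, Sanov, and M.\ Hall), so that $P/V(P)$ is locally finite and therefore amenable. This is precisely where the restriction $k\in\{2,3,4,6\}$ enters. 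I emphasize that $N$ itself is typically non-amenable, since it contains full copies of the arbitrary group $\G$; it is essential that the amenable part is concentrated in the cartesian subgroup $C$ that we quotient out.

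With the extension in hand, I would conclude as follows. By Theorem \ref{A}, applied to the same data $\G$, $A$, $X$, the ordinary wreath product $\G\wr_X A$ is \X{s}-boundedly $n$-acyclic. Since $C$ is an amenable normal subgroup of $G:=\G\wr_X^{_W}A$, the inflation $H_b^p(G/C;V^{C})\to H_b^p(G;V)$ is an isometric isomorphism for every dual Banach $G$-module $V$ and every $p$ \cite{Mon}. It then remains to check that if $V$ is semi-separable over $G$ then $V^{C}$ is semi-separable over $G/C=\G\wr_X A$: given an injective dual $G$-morphism $i\colon V\to U$ into a separable coefficient module $U$, equivariance forces $i(V^{C})\subseteq U^{C}$, and $U^{C}$ is a norm-closed (hence separable) dual $G/C$-submodule, so $i$ restricts to an injective dual $G/C$-morphism $V^{C}\to U^{C}$. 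Consequently $H_b^p(G;V)\cong H_b^p(\G\wr_X A;V^{C})=0$ for all $1\le p\le n$ and all semi-separable $V$, which is exactly the assertion that $\G\wr_X^{_W}A$ is \X{s}-boundedly $n$-acyclic.

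The main obstacle is the structural input of the first two paragraphs: identifying the ``difference'' between the verbal and the ordinary wreath product with a single amenable normal subgroup, namely the cartesian subgroup $C\cong D\,V(P)/V(P)$ of the verbal product, and verifying that this subgroup is amenable for exactly the listed varieties. Once this is in place the bounded-cohomology part is routine, the only delicate bookkeeping being the verification that passing to $C$-invariants keeps the coefficients inside the semi-separable class \X{s}, so that the conclusion of Theorem \ref{A} transfers verbatim to the quotient.
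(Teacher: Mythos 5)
Your argument is correct, and its overall skeleton is the same as the paper's: exhibit the natural surjection \(p\colon \G\wr_X^{_W}A\to\G\wr_XA\), prove its kernel amenable, combine Gromov's mapping theorem with coefficients \cite[Theorem 3.1.4]{m.rap} with Theorem \ref{acyclicwr} applied to \(\G\wr_XA\), and check that passing to kernel-invariants keeps the coefficients in \X{s} (your final verification is exactly Lemma \ref{ssep inv} of the paper; the only small adjustment is that duality of the restricted morphism \(V^C\to U^C\) should be argued via weak-* closedness of \(U^C\), Lemma \ref{closed}, and weak-* continuity, as that lemma does, rather than via norm-closedness). Where you genuinely depart from the paper is in the amenability of the kernel (Proposition \ref{amker}). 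The paper quotes two results of Brude and Sasyk: that the kernel \([\G_x]^{_W}\) embeds into the verbal product \(\bigast_{x\in X}^{_W}\G_x/W(\G_x)\) \cite[Theorem 2.16 and Corollary 2.17]{Brude}, and that countable nilpotent, solvable, and \(k\)-Burnside verbal products of amenable groups are amenable \cite[Corollary 1.6]{Brude}. You replace both citations by a one-line application of the second isomorphism theorem: \(C=D/(D\cap W(P))\cong DW(P)/W(P)\leq P/W(P)\), and \(P/W(P)\) satisfies all the laws in \(W\), hence is itself nilpotent of class \(\leq j\), solvable of derived length \(\leq j\), or of exponent \(k\); amenability is then immediate, using local finiteness of exponent-\(k\) groups for \(k\in\{2,3,4,6\}\) \cite{Burnside} — exactly the point where the restriction on \(k\) enters in both proofs. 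Your route is more elementary and self-contained; in fact, since the factors \(\G_x/W(\G_x)\) already lie in the variety, the verbal product appearing in the paper's citation coincides with the variety coproduct \(P/W(P)\), so the two embeddings have essentially the same target, and you reach it without the Brude--Sasyk machinery. What the paper's phrasing buys is consistency with the verbal-product framework of \cite{Brude}, whose finer structure theorem it reuses (also in the proof of Theorem \ref{sclcase}); your shortcut establishes the same amenable-kernel statement with less overhead, after which the bounded-cohomology transfer is identical in the two arguments.
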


\textbf{Applications:}
In the last two sections of this article, we give two applications of Theorem \ref{A}. The first one concerns stable commutator length of restricted verbal wreath products.\ Stable commutator length is an invariant of groups that is related to second bounded cohomology through Bavard’s Duality Theorem \cite{ Bavard,Cal}.\ We will use the results above, together with a result by Calegari on the stable commutator length of groups admitting a law \cite{cal2}, to show the vanishing of stable commutator length of \emph{any} verbal wreath product by \X{s}-boundedly acyclic groups:

\begin{theorem}[Theorem \ref{sclcase}]\label{C}
    Let \(A\) be a countable group, and let \(X\) be a countable \(A\)-set such that all the \(A\)-orbits are infinite.\
    If \(A\) is \X{s}-boundedly \(2\)-acyclic, then for any group \(\G\) the stable commutator length of any permutational verbal wreath product \(\G\wr_X^{_W} A\) vanishes. 
\end{theorem}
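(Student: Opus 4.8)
The plan is to deduce the statement from Bavard's Duality Theorem together with Theorem \ref{A} and Calegari's vanishing result for groups admitting a law. By Bavard duality it suffices to show that every homogeneous quasimorphism $\phi\colon \G\wr_X^{_W} A\to\R$ is a homomorphism, equivalently that its image $[\phi]$ under the comparison map $Q\to H^2_b(\cdot;\R)$ vanishes. I would set up notation as follows. Write $G=\G\wr_X^{_W} A=N\rtimes A$, where $N$ is the base group — the $W$-verbal product of the copies $\{\G_x\}_{x\in X}$ — which I would realise as $N=P/(V(P)\cap[P])$ with $P=\ast_{x\in X}\G_x$, where $V(P)$ is the verbal subgroup associated with the law $W$ and $[P]=\ker\bigl(P\twoheadrightarrow\bigoplus_{x\in X}\G\bigr)$ is the cartesian subgroup. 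Setting $C=[P]/(V(P)\cap[P])$, this is an $A$-invariant normal subgroup of $N$, hence $C\trianglelefteq G$, and $G/C\cong\G\wr_X A$ is the ordinary restricted permutational wreath product.

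First I would record the two inputs. Since $A$ is countable and \X{s}-boundedly $2$-acyclic and $X$ is a countable $A$-set with infinite orbits, Theorem \ref{A} (applied with $n=2$ and the separable, hence semi-separable, coefficient $\R$) gives $H^2_b(\G\wr_X A;\R)=0$. On the other hand $C$ admits the law $W$: indeed $V([P])\subseteq V(P)\cap[P]$, so $C$ is a quotient of $[P]/V([P])$, which lies in the variety defined by $W$; a variety being closed under quotients, $C$ satisfies $W$. By Calegari's theorem \cite{cal2} this forces $\scl_C\equiv 0$, so by Bavard duality every homogeneous quasimorphism on $C$ is a homomorphism.

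Now I would combine the two. Restricting $\phi$ to $C$ yields a homogeneous quasimorphism $\phi|_C$ which, by the previous paragraph, is a homomorphism; hence its coboundary vanishes and, by naturality of the comparison map under restriction, the restriction of $[\phi]$ to $H^2_b(C;\R)$ is zero. Feeding this into the low-degree inflation--restriction exact sequence in bounded cohomology for the extension $1\to C\to G\to \G\wr_X A\to 1$,
\[
0\longrightarrow H^2_b(\G\wr_X A;\R)\xrightarrow{\ \mathrm{infl}\ } H^2_b(G;\R)\xrightarrow{\ \mathrm{res}\ } H^2_b(C;\R)^{\G\wr_X A},
\]
exactness at the middle term gives $[\phi]\in\ker(\mathrm{res})=\mathrm{infl}\bigl(H^2_b(\G\wr_X A;\R)\bigr)=0$. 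Thus $[\phi]=0$, so $\phi$ is a homomorphism, and Bavard duality yields $\scl_G\equiv 0$. Note that the infinite-orbit hypothesis enters only through Theorem \ref{A}, and the law only through Calegari's theorem, so this argument is uniform in the variety $W$ (for the amenable varieties one could alternatively invoke the invariance of $H^2_b$ under amenable normal subgroups, and for the Burnside varieties the torsion of $C$ makes $\mathrm{Hom}(C,\R)=0$, but neither shortcut is needed).

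The main obstacle is the last step: justifying and applying the degree-$2$ inflation--restriction sequence in bounded cohomology. Unlike the full Hochschild--Serre machinery — whose unavailability is precisely the difficulty circumvented elsewhere in the paper — the five-term sequence in low degree is at hand, but I would need to confirm its exactness with the correct $\G\wr_X A$-invariants in the target and check that restriction behaves naturally on the comparison classes coming from quasimorphisms. The remaining points are structural bookkeeping: verifying that the base of the verbal wreath product is the verbal product $N$ above, that $C$ is normal in $G$ with quotient the ordinary wreath product, and that $W$ defines a proper variety so that $C$ genuinely carries a (possibly trivial) law; if $W$ is the trivial law the whole statement collapses to Theorem \ref{A}.
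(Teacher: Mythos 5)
Your proposal is correct and shares the paper's skeleton --- the same kernel \(C=[\G_x]^{\F}/([\G_x]^{\F}\cap W(\F))\) of the projection \(p\colon \G\wr_X^{_W}A\to\G\wr_XA\), the same verification that \(C\) obeys the law \(W\) (your inclusion \(W([\G_x]^{\F})\subseteq W(\F)\cap[\G_x]^{\F}\) is exactly the mechanism behind the paper's citation of Brude--Sasyk), and the same two external inputs (Theorem \ref{A} applied with the separable coefficient \(\R\), and Calegari's theorem) --- but it combines them by a genuinely different mechanism at the decisive step. The paper first upgrades Calegari's theorem to the full vanishing \(H_b^2(C;\R)=0\) (Proposition \ref{law0}, via the central-extension trick and Remark \ref{prodlaw}, finishing with injectivity of \(H^2_b\) under epimorphisms), and then transfers \(H_b^2(\G\wr_XA;\R)=0\) across \(p\) using the mapping theorem for surjections with boundedly \(2\)-acyclic kernel \cite[Theorem 4.1.1]{m.rap}, obtaining \(H_b^2(\G\wr_X^{_W}A;\R)=0\) outright (and injectivity in degree \(3\)). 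You instead use only the weaker consequence \(\scl_C\equiv 0\), i.e.\ \(Q^h(C;\R)=\operatorname{Hom}(C;\R)\), and kill each class \([\delta\phi]\) individually through exactness of inflation--restriction, so you never need Proposition \ref{law0}; in exchange you prove only \(EH_b^2(\G\wr_X^{_W}A;\R)=0\), which is all that \(\scl\) requires. The step you flag as the main obstacle is not actually a gap: the five-term exact sequence in bounded cohomology with trivial real coefficients is a theorem of Bouarich, and it also falls out of the Hochschild--Serre spectral sequence of Theorem \ref{spec}, since \(H^1_b(C;\R)=0\) always and the seminorm on \(H^2_b(\cdot;\R)\) is a norm by Matsumoto--Morita, so the \(E_2\)-identification holds in the range you use (and your restriction step is sound, since \(\delta\phi\) restricted to \(C\) is literally \(\delta(\phi|_C)=0\)). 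One caveat, which matches an implicit assumption in the paper's own phrase ``non-trivial law'': you need \(W\) to contain a non-trivial word --- if \(W\) consists only of the trivial word, then \(C\) is the cartesian subgroup of a free product, which is a free group, and both arguments genuinely fail; your reading of the law \(x=1\), for which the statement collapses to Theorem \ref{A}, is consistent with this.
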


The second application allows to construct an embedding of any group \(\G\) into a \X{s}-boundedly acyclic group that preserves the finiteness properties of \(\G\).\ This extends a recent embedding result by Wu, Wu, Zhao and Zhou who proved the analogue for \(\R\)-boundedly acyclic groups \cite[Theorem 0.1]{embed2}.

\begin{theorem}[Theorem \ref{finitenesscase}]\label{D}
    Let \(\G\) be a group of type \(FP_m\) (respectively \(F_m\)).\ Then \(\G\) injects isometrically into an \X{s}-boundedly acyclic group of type \(FP_m\) (respectively \(F_m\)).
\end{theorem}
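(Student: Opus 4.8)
The plan is to realize the overgroup as a single permutational wreath product \(G\coloneqq \G\wr_X A\), where the pair \((A,X)\) is fixed once and for all, independently of \(\G\). Following the strategy of Wu--Wu--Zhao--Zhou \cite{embed2}, I would take \(A\) to be a countable group of type \(F_\infty\) acting on a countable set \(X\) with two features: all \(A\)-orbits on \(X\) are infinite, and the action is \emph{oligomorphic in the relevant range}, i.e.\ for each \(k\le m\) the diagonal \(A\)-action on the \(k\)-element subsets of \(X\) has only finitely many orbits, with point stabilizers again of type \(F_\infty\). The first feature is exactly the hypothesis of Theorem \ref{A}; the second is exactly what is needed to feed a finiteness theorem for permutational wreath products into the argument. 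Crucially, \(A\) must moreover be \emph{fully} \X{s}-boundedly acyclic, which is the strengthening of the \(\R\)-bounded acyclicity used in \cite{embed2} that our Theorem \ref{A} is designed to exploit.

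Granting such a pair \((A,X)\), the bounded acyclicity of \(G\) is then immediate. Since \(A\) is countable and \X{s}-boundedly \(n\)-acyclic for every \(n\ge 1\), and \(X\) is countable with infinite \(A\)-orbits, Theorem \ref{A} applies in every degree and yields that \(G=\G\wr_X A\) is \X{s}-boundedly \(n\)-acyclic for all \(n\), i.e.\ fully \X{s}-boundedly acyclic. This step uses Theorem \ref{A} as a black box, degree by degree, and it is precisely the place where the upgrade from \(\R\)-coefficients to the class \X{s} takes place.

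For the finiteness properties I would invoke the finiteness analysis for permutational wreath products developed in \cite{embed2}: if \(\G\) is of type \(F_m\) (resp.\ \(FP_m\)), \(A\) is of type \(F_\infty\), and \(X\) satisfies the oligomorphy-in-range condition above with stabilizers of type \(F_\infty\), then \(G=\G\wr_X A\) is again of type \(F_m\) (resp.\ \(FP_m\)); the two regimes are handled uniformly by the homotopical resp.\ homological version. Finally, \(\G\) embeds into \(G\) as the copy of \(\G\) supported at a single coordinate \(x_0\in X\) of the base \(\bigoplus_{x\in X}\G\). With respect to the word metrics for generating sets of the form \(S_\G\cup S_A\) (with \(S_\G\) generating the coordinate at \(x_0\)), this coordinate inclusion is isometric: an element supported at \(x_0\) already realizes its \(\G\)-word length inside \(G\), and the lower bound on its length comes from the coordinate retraction \((\bigoplus_{x}\G)\rtimes \mathrm{Stab}_A(x_0)\to \G\). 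This is the standard isometric embedding of a coordinate subgroup into a permutational wreath product.

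I expect the main obstacle to be the construction, or identification in the literature, of the universal pair \((A,X)\) meeting all constraints simultaneously: countability and type \(F_\infty\), an action with infinite orbits that is oligomorphic in the relevant range with type-\(F_\infty\) stabilizers, and---most delicately---full \X{s}-bounded acyclicity rather than mere \(\R\)-bounded acyclicity. Oligomorphic groups of Thompson or Houghton type are the natural candidates for the first three requirements, so the crux is verifying that such an \(A\) can be chosen \X{s}-boundedly acyclic; in view of Question \ref{quest}, this is exactly the point at which the distinction between \(\R\)- and \X{s}-coefficients must be controlled for the specific group \(A\) in play. Everything else is assembly of Theorem \ref{A}, the wreath-product finiteness theorem, and the isometric coordinate embedding.
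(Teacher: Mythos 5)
Your architecture coincides with the paper's proof --- a single permutational wreath product \(\G\wr_X A\) with \((A,X)\) fixed independently of \(\G\), Theorem \ref{A} applied degree by degree for the \X{s}-bounded acyclicity, a wreath-product finiteness theorem, and the isometric coordinate embedding --- but the proposal stops exactly at the point that carries all the remaining content: you never exhibit the pair \((A,X)\), and you explicitly defer its existence as ``the main obstacle.'' As written this is a conditional argument, not a proof. Moreover, the step you flag as most delicate is already settled in the literature: Monod proved that Thompson's group \(F\) is \X{s}-boundedly acyclic (for semi-separable coefficients, not merely real or separable ones) \cite[Section 4.3]{lamp}, which is precisely what this paper records in Example \ref{Thompson F}. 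So no new control of the distinction raised in Question \ref{quest} is needed for the specific \(A\) in play; that question is open in general but irrelevant here, because the semi-separable statement for \(F\) is what Monod establishes.

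The paper's proof of Theorem \ref{finitenesscase} instantiates your template with \(A=F\) acting on \(X=(0,1)\cap\Z[1/2]\): all orbits are infinite, the diagonal action on \(X^i\) has finitely many orbits for every \(i\), and point stabilizers satisfy \(F_x\cong F\times F\), hence are of type \(F_\infty\) --- exactly your ``oligomorphic in the relevant range with \(F_\infty\) stabilizers'' condition. The finiteness input is then Bartholdi--de Cornulier--Kochloukova \cite[Proposition 5.1]{fini}; your attribution of the finiteness analysis to \cite{embed2} is off, since that paper is the source of the embedding statement being generalized, not of the wreath-product finiteness criterion. A second small discrepancy: the paper does not use a homotopical version of the wreath finiteness theorem for the \(F_m\) case; it reduces type \(F_m\) (\(m\geq 2\)) to type \(FP_m\) plus finite presentability of \(\G\wr_X F\), the latter supplied by \cite[Theorem 1.1]{finpres}, with \(m=1\) handled directly. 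Your sketch of the isometric embedding via the retraction \(\big(\bigoplus_{x\in X}\G\big)\rtimes \mathrm{Stab}_A(x_0)\to\G\) is correct and is the standard argument the paper leaves implicit. In short: fill the gap by taking \((A,X)=\big(F,(0,1)\cap\Z[1/2]\big)\) and citing Monod, and your outline becomes the paper's proof.
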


 \textbf{Plan of the paper:} We start by laying out the necessary background on
coefficient modules and bounded cohomology.\ In Sections \ref{2} and \ref{3} we look into the class of semi-separable modules and recall results on Zimmer-amenable and ergodic group actions.\ Section \ref{4} is dedicated to the proof Theorem \ref{A} that is then extended in Section \ref{6} to the case of verbal wreath products (Corollary \ref{B}).\ In Section \ref{5} we present some examples of applications of Theorem \ref{A}. Finally, in Sections \ref{7} and \ref{8} we discuss two applications regarding stable commutator length of verbal wreath products (Theorem \ref{C}) and isometric embeddings into boundedly acyclic groups respectively (Theorem \ref{D}).\medskip

\textbf{Acknowledgements:}
I am very grateful to my advisor Marco Moraschini for his guidance throughout this project and to Francesco Fournier-Facio for the ideas and suggestions he shared with me.\ I would like to thank Xiaolei Wu for bringing the Houghton groups to my attention.\ This work also benefited from conversations with Giuseppe Bargagnati, Caterina Campagnolo, Alberto Casali, Giorgio Mangioni, Filippo Sarti and the topology group in Pisa.

\section{Preliminaries}
Unless otherwise stated, every group in this article is assumed to be \emph{discrete}.\
In this section we recall definitions and preliminary results that we will need later.

\subsection{Coefficient modules}

 The notion of \emph{coefficient modules} was introduced by Monod as a convenient class of modules in the study of bounded cohomology \cite[Definition 1.2.1]{Mon}.\ We recall here the definition and we prove that under some closedness assumptions, submodules and quotients of coefficient modules are still coefficient modules.
\medskip\\
    As usual a map \(\phi\colon V\to U\)
between normed vector spaces is \emph{bounded} when it has finite operator norm
\[||\phi||_{\textup{op}} \coloneqq \sup_{x\in V\setminus \{0\}}
\frac{||\phi(x)||}{||x||}.\] 

\begin{df}[normed \(\G\)-module]
     Let \(\G\) be a group.\
A \emph{normed \(\G\)-module} is a normed \(\R\)-module \(V\) together
with a \(\G\)-action by linear isometric isomorphisms.\
When \(V\) is complete we say that \(V\) is a \emph{Banach} \(\G\)-module.\\
A morphism of normed \(\G\)-modules, simply called \emph{\(\G\)-morphism}, is a \(\G\)-equivariant bounded linear
map between normed \(\G\)-modules.
\end{df}

\begin{df}[coefficient \(\G\)-module {\cite[Definition 1.2.1]{Mon}}]
    Let \(\G\) be a group. A \emph{dual \(\G\)-module} \(V\) is the topological  dual of a Banach \(\G\)-module \(V^b\) endowed with the operator norm and the \(\G\)-action induced by the action of \(\G\) on \(V^b\).\ That is for every \(f\in V\) and \(g\in \G\), the action of \(g\) on \(f\) is defined as \((g\cdot f)(u)\coloneqq f(g^{-1}\cdot u)\) for every \(u\in V^b\).\\ A \emph{coefficient \(\G\)-module} is a dual \(\G\)-module \(V\) such that the predual \(V^b\) is separable.\\
    A \emph{dual \(\G\)-morphism} between dual \(\G\)-modules is a \(\G\)-morphism that is dual to some morphism between the preduals. 
\end{df}

\begin{df}[separable module]
    Let \(\G\) be a group.\ A coefficient \(\G\)-module \(V\) is \emph{separable} if there exists a countable dense subset in \(V\) with respect to the topology induced by the norm.\ We denote by \X{} the class of all separable coefficient \(\G\)-modules.
\end{df}

For every group \(\G\) the real line \(\R\) endowed with the trivial \(\G\)-action is a separable Banach module.\ Given a standard probability space \((\Omega,\mu)\) with a measurable, measure preserving \(\G\)-action, the space \(L^2(\Omega;\R)\) is also a separable Banach \(\G\)-module.\ We will see later an important example of coefficient \(\G\)-module that is not separable (Example \ref{Linfcoef}).

\begin{rk}\label{dual}
    A morphism of coefficient \(\G\)-modules \(\phi\colon V\to U\) is dual if and only if it is \emph{weak}-\(^*\) continuous \cite[Definition 1.2.1]{Mon}.
\end{rk}

\begin{rk}
    Observe that the induced action on a dual \(\G\)-module \(V\) is still by isometries.
\end{rk}

 As usual, when \(H\) is a subgroup of a discrete group \(\G\) and \(V\) is a coefficient \(\G\)-module, we denote the \emph{submodule of \(H\)-invariants} of \(V\) as:
\[V^H\coloneqq\{v\in V\vert\; h\cdot v=v \text{ for all } h\in H\}.\]

\begin{df}\label{quotmod}
    Let \(H\) be a subgroup of a group \(\G\) and let \(\iota:H\hookrightarrow G\) be the inclusion.\ Suppose that \(V\) is a coefficient \(\G\)-module.\ Then, the action of \(\G\) on \(V\) induces a \emph{restriction} action of \(H\) on \(V\) defined by \begin{eqnarray*}
   \phi_H\colon  H\times V&\to& V\\
   (h,v)&\mapsto &\iota(h)\cdot v.
\end{eqnarray*} If \(H\) is normal, let \(V^H\) be the submodule of \(H\)-invariants in \(V\).\ The action of \(\G\) on \(V^H\) passes to the quotient \(Q\cong {\G}/{\iota(H)}\) and induces an action of \(Q\) on \(V^H\) defined by
\begin{eqnarray*}
   \phi_Q\colon Q\times V^H&\to& V^H\\
   (gH,v)&\mapsto &g\cdot v.
\end{eqnarray*} 
\end{df}
We recall the following: 
\begin{lem}\label{closed}
    Let \(H\) be a subgroup of a discrete group \(\G\) and let \(V\) be a coefficient \(\G\)-module.\ Then the subspace \(V^{H}\) is closed and weak-* closed in \(V\).
\end{lem}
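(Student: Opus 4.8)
The plan is to prove the stronger statement that $V^H$ is weak-* closed and then deduce norm closedness for free: since $V=(V^b)^*$ carries the weak-* topology as a topology \emph{coarser} than the norm topology, every weak-* open set is norm open, hence every weak-* closed set is automatically norm closed. So all the content lies in the weak-* assertion, and I would reduce to it immediately.

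The key input is that the $\G$-action on a coefficient module is by \emph{dual} morphisms. For each $h\in H$ the map $v\mapsto h\cdot v$ is by definition dual to the isometry $u\mapsto h^{-1}\cdot u$ of the predual $V^b$, so by Remark \ref{dual} it is weak-* continuous. Concretely, for $v\in V$ and $u\in V^b$ we have $(h\cdot v)(u)=v(h^{-1}\cdot u)$, whence the condition $h\cdot v=v$ is equivalent to $v(h^{-1}\cdot u-u)=0$ for every $u\in V^b$. This rewriting is the whole trick: it turns the defining equations of $V^H$ into vanishing conditions against fixed vectors of the predual, which are exactly the functionals that the weak-* topology makes continuous.

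I would then write
\[
V^H=\bigcap_{h\in H}\ \bigcap_{u\in V^b}\ \ker\big(\mathrm{ev}_{\,h^{-1}\cdot u-u}\big),
\]
where $\mathrm{ev}_{w}\colon V\to\R$, $v\mapsto v(w)$, denotes evaluation against the fixed predual vector $w=h^{-1}\cdot u-u\in V^b$. Each $\mathrm{ev}_{w}$ is weak-* continuous by the very definition of the weak-* topology, so its kernel is a weak-* closed hyperplane; an arbitrary intersection of weak-* closed sets is weak-* closed. Hence $V^H$ is weak-* closed, and the reduction of the first paragraph completes the proof.

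The argument is essentially formal once the correct topology is chosen, so there is no serious obstacle to overcome. The only point that genuinely must be used, rather than a generic property of isometric actions, is that the action is \emph{dual} and therefore weak-* continuous: a general isometric $\G$-action on a dual space need not preserve weak-* closedness of the invariant subspace, so it is worth isolating Remark \ref{dual} as the crucial ingredient and flagging that the norm-closedness half is a soft consequence of the weak-* one.
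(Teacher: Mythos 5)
Your proof is correct and takes essentially the same route as the paper, which writes \(V^H=\bigcap_{h\in H}\phi_h^{-1}(0)\) for the norm- and weak-*-continuous maps \(\phi_h(v)=hv-v\); you merely unwind the weak-* continuity of \(\phi_h\) into vanishing against the predual vectors \(h^{-1}\cdot u-u\) and deduce norm-closedness for free from the coarseness of the weak-* topology instead of invoking norm-continuity separately. If anything your version is slightly more careful on one point: the paper's one-line proof nominally attributes the weak-* continuity of \(\phi_h\) to the action being by isometries, whereas, as you correctly flag, it is the \emph{duality} of the action on a coefficient module (Remark \ref{dual}) that yields weak-* continuity.
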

\begin{proof}
    Since \(\G\) acts by linear isometries, for every \(g\in \G\) the map \(\phi_g\colon V\to V\) that sends \(v\) to \(gv-v\) is continuous and weak-* continuous.\ Then \(V^H=\bigcap_{g\in H}\phi_g^{-1}(0)\) is closed as it is an intersection of closed subspaces.
\end{proof}

     Given a Banach space \(X\) and its \emph{topological dual} \(X'\), we denote by \(M^{\perp}\) the \emph{annihilator} of a subspace \(M\) of \(X\) (i.e.\ the set \(\{f\in X' \; \vert\; \langle f,x\rangle=0 \text{ for all } x\in M\}\)).
     Similarly for a subspace \(N\) of \(X'\) we denote by \(^{\perp}N\) its \emph{annihilator} (i.e.\ the set \(\{x\in X\; \vert\; \langle f,x\rangle=0 \text{ for all } f\in N\}\)).\ 
     Recall the following classical result of functional analysis:

\begin{teo}[{\cite[Theorem 4.7 and 4.9]{rudin}}]\label{rudindual}
Let \(M\) be a norm-closed subspace of a Banach space \(X\).\ The following isometric isomorphisms hold:
\begin{enumerate}
    \item  \( X'/M^{\perp}\cong M',\)
    \item \( M^{\perp} \cong(X/M)'.\)
    \item \((^{\perp}N)^{\perp}\) is the weak-* closure of \(N\) in \(X'.\)
\end{enumerate}
\end{teo}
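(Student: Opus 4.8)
The plan is to treat the three statements independently, each by exhibiting a natural candidate map and invoking the Hahn--Banach theorem, together with the definition of the quotient norm.

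For part (1) I would consider the restriction map \(\rho\colon X'\to M'\), \(f\mapsto f|_M\). Its kernel is by definition exactly \(M^{\perp}\), and it is surjective: given any \(\phi\in M'\), the Hahn--Banach theorem extends it to some \(f\in X'\) with \(\|f\|=\|\phi\|\). Thus \(\rho\) descends to a linear isomorphism \(\bar\rho\colon X'/M^{\perp}\to M'\). To see it is isometric I would compute the quotient norm \(\|[f]\|=\inf_{g\in M^{\perp}}\|f+g\|\). The inequality \(\|f|_M\|\le\|[f]\|\) is immediate, since restriction is norm-nonincreasing and \((f+g)|_M=f|_M\) for every \(g\in M^{\perp}\); the reverse inequality follows because the Hahn--Banach extension of \(f|_M\) is an element of the coset \([f]\) whose norm equals \(\|f|_M\|\).

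For part (2) I would use the quotient projection \(\pi\colon X\to X/M\) (well defined as a map to a normed space precisely because \(M\) is norm-closed) and the precomposition map \(\pi^{*}\colon (X/M)'\to X'\), \(\phi\mapsto\phi\circ\pi\). Since \(\pi\) is surjective, \(\pi^{*}\) is injective, and its image is exactly the set of functionals vanishing on \(M=\ker\pi\), that is, \(M^{\perp}\). For the isometry, \(\|\pi\|\le 1\) gives \(\|\pi^{*}\phi\|\le\|\phi\|\), while the reverse inequality follows from the fact that the quotient norm is an infimum, so every \(\bar x\in X/M\) admits representatives \(x\in X\) with \(\|x\|\) arbitrarily close to \(\|\bar x\|\).

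Parts (1) and (2) are routine once Hahn--Banach and the quotient norm are in hand; I expect the only genuine subtlety to lie in part (3), the bipolar-type identity. Here I would first record the easy inclusion: \(N\subseteq({}^{\perp}N)^{\perp}\) holds tautologically, and \(({}^{\perp}N)^{\perp}=\bigcap_{x\in{}^{\perp}N}\ker(\mathrm{ev}_x)\) is an intersection of weak-* closed sets, hence weak-* closed, so it contains the weak-* closure \(\overline{N}^{w*}\). For the reverse inclusion I would argue by contradiction: if some \(f_0\in({}^{\perp}N)^{\perp}\) lay outside the weak-* closed subspace \(\overline{N}^{w*}\), the Hahn--Banach separation theorem applied in the locally convex space \((X',\text{weak-*})\) would yield a weak-* continuous functional separating \(f_0\) from \(\overline{N}^{w*}\). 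The crucial step, and the one most likely to require care, is the identification of the weak-* continuous linear functionals on \(X'\) with the evaluations \(\mathrm{ev}_x\) at points \(x\in X\). Such an \(x\) would satisfy \(\langle f,x\rangle=0\) for all \(f\in N\), so \(x\in{}^{\perp}N\), yet \(\langle f_0,x\rangle\neq 0\), contradicting \(f_0\in({}^{\perp}N)^{\perp}\). This forces \(({}^{\perp}N)^{\perp}=\overline{N}^{w*}\), completing the proof.
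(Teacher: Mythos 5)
Your proposal is correct and takes essentially the same route as the source: the paper does not prove this statement itself but cites it from Rudin, and your argument --- the restriction map plus Hahn--Banach for (1), precomposition with the quotient map for (2), and the weak-* separation argument for (3), hinging on the identification of weak-* continuous functionals on \(X'\) with evaluations at points of \(X\) --- is precisely the standard proof of Rudin's Theorems 4.7 and 4.9. The only point worth making explicit in (3) is that since \(\overline{N}^{w*}\) is a linear subspace, the separating functional \(\mathrm{ev}_x\) must vanish identically on it (otherwise the supremum over the subspace would be infinite), which is what licenses your conclusion \(x\in {}^{\perp}N\).
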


Using the previous result we show that weak-* closed submodules and quotients of coefficient modules are dual.\ The following result is classical \cite[Section 1.2]{Mon} but we give a complete proof for convenience of the reader.

\begin{lem}\label{G-mod}
    Let \(V\) be a coefficient \(\G\)-module, and let \(W\) be a weak-* closed \(\G\)-invariant submodule of \(V\).\ Then \(W\) and \(V/W\) have an induced coefficient \(\G\)-module structure. 
\end{lem}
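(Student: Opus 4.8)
The plan is to realize both $W$ and $V/W$ as topological duals of separable Banach $\G$-modules by applying Theorem \ref{rudindual} to the predual $X\coloneqq V^b$, for which $V=X'$. The separable Banach module playing the role of predual in each case will be built from the pre-annihilator $M\coloneqq {}^{\perp}W\subseteq V^b$.

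First I would check that $M$ is a Banach $\G$-submodule of $V^b$. It is norm-closed by the definition of an annihilator, and it is separable as a subspace of the separable space $V^b$. To see that it is $\G$-invariant, recall that the action of $\G$ on $V=(V^b)'$ is dual to the action on $V^b$, so $\langle g\cdot f,u\rangle=\langle f,g^{-1}\cdot u\rangle$ for all $f\in V$ and $u\in V^b$. Given $u\in M$ and $f\in W$, the $\G$-invariance of $W$ gives $g^{-1}\cdot f\in W$, whence $\langle f,g\cdot u\rangle=\langle g^{-1}\cdot f,u\rangle=0$; thus $g\cdot u\in M$. Consequently the action of $\G$ also descends to the separable Banach $\G$-module $V^b/M$.

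Next, since $W$ is weak-* closed, Theorem \ref{rudindual}(3) gives $W=({}^{\perp}W)^{\perp}=M^{\perp}$. Applying Theorem \ref{rudindual}(2) then yields an isometric isomorphism $W=M^{\perp}\cong (V^b/M)'$, exhibiting $W$ as the dual of the separable Banach module $V^b/M$; applying Theorem \ref{rudindual}(1) yields $V/W=X'/M^{\perp}\cong M'$, exhibiting $V/W$ as the dual of the separable Banach module $M$. In both cases the predual is separable, so all that remains is to check compatibility of the $\G$-actions.

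This last step is the only place that carries real content (the underlying functional analysis being supplied by Theorem \ref{rudindual}): I must verify that the two isometric isomorphisms intertwine the restriction, respectively quotient, actions on $W$ and $V/W$ with the dual actions on $(V^b/M)'$ and $M'$. The isomorphism $M^{\perp}\to (V^b/M)'$ sends $f$ to the functional $u+M\mapsto\langle f,u\rangle$, while $X'/M^{\perp}\to M'$ sends $f+M^{\perp}$ to the restriction $f|_M$. Both are defined purely by restricting the pairing, and the identity $\langle g\cdot f,u\rangle=\langle f,g^{-1}\cdot u\rangle$—together with the $\G$-invariance of $M$ established above—shows at once that each map is $\G$-equivariant. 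Hence the dual $\G$-module structures obtained from Theorem \ref{rudindual} agree with the structures induced from $V$, and both $W$ and $V/W$ are coefficient $\G$-modules.
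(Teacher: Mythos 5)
Your proof is correct and follows essentially the same route as the paper's: both realize \(W\cong(V^b/{}^{\perp}W)'\) and \(V/W\cong({}^{\perp}W)'\) via Theorem \ref{rudindual}, after checking that \({}^{\perp}W\) is a norm-closed, separable, \(\G\)-invariant submodule of the predual and that the resulting isometric isomorphisms are \(\G\)-equivariant. If anything, you are slightly more explicit than the paper in invoking part (3) of Theorem \ref{rudindual} to deduce \(W=({}^{\perp}W)^{\perp}\) from weak-* closedness, a step the paper's proof leaves implicit.
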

\begin{proof}
    Since \(^{\perp}W\) is the intersection of the kernels of the functionals in \(W\), the space \(^{\perp}W\) is closed in the predual \(V^b\) of \(V\) with respect to the Banach norm.\ Recall that, by definition, the predual of a coefficient module is required to be separable.\ Hence we have that also \(^{\perp}W\) and \(V^b/^{\perp}W\) are separable Banach modules as closed subsets and quotients by closed subsets of separable Banach modules.\ By the duality principle of Theorem
    \ref{rudindual} with \(M= {^{\perp}W}\) and \(X=V^b\), we obtain
    \[W\cong \Big(\frac{V^b}{^{\perp}W}\Big)'\quad\text{ and } \quad\frac{V}{W}\cong (^{\perp}W)'.\]
    This shows that \(W\) and \(V/W\) are Banach modules, dual to separable \(\G\)-modules.\ We are left to show that the \(\G\)-action on \(W\) and \(V/W\) induced by the \(\G\)-action on \(V\) coincides with the contragradient action.\ 
    The action of \(\G\) on \(V\) is the contragradient action to that of \(\G\) on \(V^b\) i.e.\ for all \(f\in V\) and \(g\in \G\), for every \(u\in V^b\), we have~\((g\cdot f)(u)=f(g^{-1}\cdot u)\).\\ Let \(f\in W\subseteq V\).\ Then, by definition, \(f(u)=0\) for all \(u\in {^{\perp}W}\), hence \(f\colon V^b\to \R\) passes to the quotient \(\overline{f}\colon V^b/{^{\perp}W}\to \R\).\ 
    Moreover, given that \(W\) is \(\G\)-invariant then also \(^{\perp}W\) is \(\G\)-invariant.\ Indeed if \(u\in {^{\perp}W}\) and \(f\in W\) then for all \(g\in \G\) we have \(f(g\cdot u)=(g^{-1}\cdot f)(u)=0\) since \(g^{-1}\cdot f\in W\) by \(\G\)-invariance.\ Now the \(\G\)-invariance of \(^{\perp}W\) implies that the \(\G\) action on \(V^b\) passes to the quotient.\ This shows that \(V^b/{^{\perp}W}\) is a separable Banach \(\G\)-module.\ We are left to show that the isomorphism \(W\cong (V^b/{^{\perp}W})'\) is \(\G\)-equivariant: for every \({u} \in V^b\) 
    \[g\cdot \overline{f}(u+{^{\perp}W})=\overline{f}(g^{-1}( u+{^{\perp}W}))=\overline{f}(g^{-1} u+{^{\perp}W})= \overline{g\cdot f}(u+{^{\perp}W}).\]
    Similarly, using the \(\G\)-invariance of \(^{\perp}W\), we can show that the map 
    \begin{eqnarray*}
        \phi\colon V/W\to (^{\perp}W)'\\
        f+W\mapsto f_{|^{\perp}W},
    \end{eqnarray*}
    is well defined and \(\G\)-equivariant.\ Indeed, if \(f,g\in V\) and \(f-g\in W\), we have \((f-g)_{|^{\perp}W}=0\) and for every \(g\in \G\) we have \(g\cdot \phi(f+W)=g\cdot f_{|^{\perp}W}=(g\cdot f)_{|^{\perp}W}.\)
\end{proof}

\begin{exa}[the module \(L^{\infty}_{{_W*}}(\Omega,V)\)]\label{Linfcoef}
        Given a standard Borel probability \(\G\)-~space \(\Omega\) and a coefficient \(\G\)-module \(V\),  we denote with \(L_{_W*}^{\infty}(\Omega,V)\) the space of classes of essentially bounded \(V\)-valued weak-* measurable
    maps on \(\Omega\) endowed with the essential supremum norm.\ 
    We endow this space with the \(\G\)-action defined by \[(g\cdot f ) (s) = gf(g^{-1}s) \; \text{for every}\; g \in \G \;\text{and}\; f\in L_{_W*}^{\infty}(\Omega,V).\]
    As proved by Monod, the module \(L_{_W*}^{\infty}(\Omega,V)\) is a coefficient \(\G\)-module, predual to \(L^1(\mu;V^b)\) where \(\mu\) is the probability measure on \(\Omega\) and \(V^b\) is the predual of \(V\) \cite[Corollary 2.3.2]{Mon}.\ 
    Using Lemma \ref{G-mod} we obtain that  the restriction to the \(\G\)-invariants \(L_{_W*}^{\infty}(\Omega,V)^{\G}\), is also a coefficient \(\G\)-module.\\
    We point out that the space \(L_{_W*}^{\infty}(\Omega,V)\) is \emph{not} separable (unless \(\Omega\) is discrete and finite).\ In order to consider a class of coefficient modules that allows us to include \(L_{_W*}^{\infty}(\Omega,V)\) we will define the larger class of semi-separable modules (Section \ref{2}).
\end{exa}

\subsection{Bounded cohomology}
In this section we recall the notions of \emph{bounded cohomology} and \emph{bounded acyclicity} for discrete groups.\ We refer the reader to the books by Monod \cite{Mon} and Frigerio \cite{fri} and the article by Moraschini and Raptis \cite{m.rap} for further details.

\begin{df}[{bounded cohomology}]
    Let \(\G\) be a group and let \(V\) be a Banach \(\G\)-module.\ For all \(n\geq 0\) we define the \emph{cochain complex of bounded functions}:
\begin{eqnarray*}
    C_b^n(\G;V)\coloneqq\{f\colon \G^{n+1}\to V\; |\; ||f||_{\infty}\coloneqq\sup_{g_0,\dots g_n}||f(g_0,\dots,g_n)||_V<+\infty\}
\end{eqnarray*}
endowed with the simplicial coboundary operator
\begin{eqnarray*}
    \delta^n \colon C_b^n(\G; V) \to C_b^{n+1}(\G; V)\end{eqnarray*}
    \begin{eqnarray*}
    \delta^n(f)(g_0, \dots , g_{n+1})\coloneqq\sum_{i=0}^{n+1}
(-1)^if(g_0, \dots, \hat{g_i},\dots , g_{n+1}).
\end{eqnarray*}
   The module \(C_b^n(\G; V)\) is endowed with the following \(\G\)-action: For all \(f\in C_b^n(\G; V)\) and \(\gamma \in \G\), we have
   \[(\gamma\cdot f)(g_0,\dots, g_n)\coloneqq\gamma f(\gamma^{-1}g_0,\dots,\gamma^{-1}g_n).\]
    We define the \(n^{th}\)-\emph{bounded cohomology} of \(\G\) \emph{with coefficients in} \(V\) as the quotient \[H_b^n(\G;V)\coloneqq Z_b^n(\G,V)/B_b^n(\G,V) \text{,    where   }\]
    \[Z_b^n(\G,V):=\ker\delta^n \cap  C_b^n(\G,V)^{\G} \; \; \; \text{and}  \;\;\; B_b^n(\G,V):=\delta^{n-1} ( C_b^{n-1}(\G,V)^{\G}).\]
    The \(\ell^{\infty}\)-norm on \(C_b^n(\G;V)\) descends to a semi-norm in bounded cohomology by taking the infimum of the norm of the representatives in the class, i.e.\ for all \(\alpha \in H_b^n(\G,V)\) we have
    \[ ||\alpha||_{\infty}=\inf\{||f||_{\infty}\;\big\vert\; f\in Z_b^n(\G,V) \; \mbox{and} \; [f]=\alpha \}.\]
   \end{df}

\begin{df}[comparison map]
The inclusion of the complex of bounded functions into the complex of (possibly) unbounded functions \(\iota\colon C_b^*(\G;V)\to C^*(\G;V)\), induces a map in homology
\[c^n\colon H^n_b (\G;V ) \to H^n(\G;V )\]
for all \(n\geq 0\)
called the \emph{comparison map}. This map is
neither injective nor surjective in general \cite[Chapter 1.6]{fri}. We denote the kernel of \(c^n\) by \(EH_b^n(\G;V )\).
\end{df}
In this paper we are interested in groups with vanishing bounded cohomology:

\begin{df}[boundedly acyclic module]
    Let \(\Gamma\) be a discrete group and let \(n \geq 1\) be an integer
or \(n = \infty\).\ A Banach \(\Gamma\)-module \(V\) 
 is \emph{boundedly \(n\)-acyclic} if
\(H^k_b(\Gamma; V )=0\) for every \(1 \leq k \leq n\).\ We say that \(V\) is \emph{boundedly acyclic} if \(V\) is
boundedly \(\infty\)-acyclic.
\end{df}

\begin{df}[\(\mathfrak{X}\)-boundedly acyclic group] Let \(\G\) be a discrete group and \(\mathfrak{X}\) be a subclass of the class of Banach modules and let \(n \geq 1\) be an integer or \(n =\infty\).\ We say that
\(\G\) is \emph{\(\mathfrak{X}\)-boundedly \(n\)-acyclic} if \(H_b^k(\G; V) = 0\) for every \(1 \leq k \leq n\) and every
Banach \(\G\)-module \(V\) in \(\mathfrak{X}\).\ The group \(\G\) is said to be \emph{\(\mathfrak{X}\)-boundedly acyclic}
if it is \(\mathfrak{X}\)-boundedly \(\infty\)-acyclic. 
\end{df}

See Section \ref{5} for a non-exhaustive list of examples of boundedly acyclic groups.

\section{Semi-separable modules}\label{2}

As anticipated, in this section we extend the definition of separable module, in order to define a subclass of coefficient modules that includes the Banach modules~\(L_{_W*}^{\infty}(\Omega,V)\) introduced in Example \ref{Linfcoef} for every standard Borel probability space~\(\Omega\) and every separable coefficient module \(V\).\ The main definition is due to Monod \cite{MonSemi}:

\begin{df}[semi-separable module]
    Let \(\G\) be a group and let \(V\) be a coefficient \(\G\)-module.\ We say that \(V\) is \emph{semi-separable} if there exists a separable coefficient \(\G\)-module \(U\) and an injective dual \(\G\)-morphism \(i\colon V\to U\).\
    We denote by \X{s} the class of all semi-separable coefficient \(\G\)-modules.
\end{df}

The following technical lemma shows that the class of semi-separable coefficient modules is closed with respect to taking the restriction of the action to a subgroup \(H\) of \(\G\) and to taking the submodule of \(H\)-invariants with the action of the quotient \(\G/H\) when \(H\) is normal.

\begin{lem}\label{ssep inv}
    Let us consider the following short exact sequence of discrete groups \[1 \to H\xrightarrow{i} \G\xrightarrow{p} Q\to 1.\] Let \(V\) be a semi-separable coefficient \(\G\)-module.\ Then \(V\) is a semi-separable module with the restricted \(H\)-action and \(V^H\) is semi-separable with the induced \(Q\)-action.
\end{lem}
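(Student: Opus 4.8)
The statement has two parts, and I will handle them in order. For the first part, let me unpack what I am given: $V$ is a semi-separable coefficient $\G$-module, so by definition there is a separable coefficient $\G$-module $U$ and an injective dual $\G$-morphism $i\colon V\to U$. The plan is to simply restrict the action along $i\colon H\hookrightarrow \G$. First I would note that restricting the $\G$-action on any coefficient module to $H$ yields a coefficient $H$-module: the predual is unchanged (still separable), and the contragredient $H$-action is again by isometries and is weak-* continuous because it was as a $\G$-action. The same restriction turns $U$ into a separable coefficient $H$-module, and the map $i$ remains an injective $H$-equivariant dual morphism, since $\G$-equivariance implies $H$-equivariance and duality is preserved under restriction (the predual map just gets its domain/codomain reinterpreted as $H$-modules). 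Hence $V$ with the restricted $H$-action is semi-separable. This part is essentially bookkeeping.

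The second part, that $V^H$ is $\X{s}$ as a $Q$-module, is where the real content lies. The strategy is to produce a separable coefficient $Q$-module together with an injective dual $Q$-morphism out of $V^H$. Since $i\colon V\to U$ is injective and $\G$-equivariant, it should map $V^H$ into $U^H$ and restrict to an injective map $i^H\colon V^H\to U^H$. I would first check that $V^H$ and $U^H$ are themselves coefficient modules: by Lemma \ref{closed} the invariants $V^H$ and $U^H$ are weak-* closed and $\G$-invariant (hence invariant submodules), so by Lemma \ref{G-mod} they carry induced coefficient $\G$-module structures, and because $H$ acts trivially on the invariants the action descends to $Q\cong \G/i(H)$ exactly as in Definition \ref{quotmod}. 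The key reduction is therefore: $U^H$ is a coefficient $Q$-module obtained as a weak-* closed invariant submodule of the separable coefficient module $U$, and the map $i^H$ is an injective dual $Q$-morphism. So it remains to show $U^H$ is \emph{separable}.

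The main obstacle is precisely this separability of $U^H$. Lemma \ref{G-mod} only guarantees that $U^H$ is a coefficient module, i.e. dual to a separable Banach space; it does not a priori say the dual space $U^H$ is itself separable in norm. Here I would use the concrete description from the proof of Lemma \ref{G-mod}: with $U = (U^b)'$ and $W = U^H$ weak-* closed, one has $U^H \cong (U^b/{}^{\perp}U^H)'$, and the predual $U^b/{}^{\perp}U^H$ is separable as a quotient of the separable space $U^b$. To conclude norm-separability of the dual I would invoke the standard functional-analytic fact that a coefficient module $V$ is separable if and only if its predual $V^b$ is separable \emph{in the weak-* topology considerations} — more carefully, I expect the paper intends separability of coefficient modules to be detected via the predual, so that a coefficient module with separable predual which embeds into a separable module is separable. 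The cleanest route, and the one I would adopt, is to bypass proving $U^H$ separable outright and instead compose: the inclusion $U^H \hookrightarrow U$ is itself an injective dual $Q$-morphism (viewing $U$ with its $\G$-action restricted, though $U$ is only a $\G$-module, not a $Q$-module, which is the snag).

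Because $U$ need not be a $Q$-module, the composition route fails and I am forced back to proving $U^H$ separable directly. The resolution I would pursue: since $U$ is a \emph{separable} coefficient module, its norm topology is separable, and $U^H$ is a norm-closed subspace of $U$ by Lemma \ref{closed}; a subspace of a norm-separable metric space is norm-separable. This is the crucial simplification — separability as a coefficient module for $U$ gives genuine norm-separability of $U$, hence of the subspace $U^H$, and $U^H$ is already known to be a coefficient $Q$-module by the Lemma \ref{G-mod} argument applied with the quotient action. Therefore $U^H$ is a separable coefficient $Q$-module, and $i^H\colon V^H\to U^H$ is the required injective dual $Q$-morphism, completing the proof. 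I expect the only delicate point the author must verify carefully is that $i^H$ is genuinely a \emph{dual} $Q$-morphism, i.e. weak-* continuous for the quotient structures, which by Remark \ref{dual} reduces to checking weak-* continuity and can be extracted from the predual map $U^b/{}^{\perp}U^H \to V^b/{}^{\perp}V^H$ induced by the predual of $i$.
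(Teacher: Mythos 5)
Your proposal is correct and follows essentially the same route as the paper: restrict the witnessing embedding \(\iota\colon V\to U\) to get the \(H\)-statement, then show \(\iota(V^H)\subseteq U^H\), deduce separability of \(U^H\) from norm-separability of \(U\) (separability of metric spaces passes to subspaces), and verify \(Q\)-equivariance and duality of the restriction. The two detours you flag and discard are indeed dead ends --- in particular, separability of a coefficient module is \emph{not} detected by separability of its predual (e.g.\ \(\ell^\infty=(\ell^1)'\)) --- and your final verification of duality via the induced predual map \(U^b/{}^{\perp}U^H\to V^b/{}^{\perp}V^H\) is an equivalent variant of the paper's direct weak-* continuity check, the two being interchangeable by Remark \ref{dual}.
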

\begin{proof}

    Let \(U\) be a separable coefficient \(\G\)-module and let \(\iota\colon V\to U\) be an injective dual \(\G\)-morphism witnessing the definition of semi-separability.\ 
    Then \(\iota\colon V\to U\) is also an injective dual morphism of \(H\)-modules if we restrict the action by \(\G\) to \(H\).\ This shows that \(V\) is a semi-separable Banach \(H\)-module.\\
    Let us show now that \(V^H\) is a semi-separable Banch \(Q\)-module. First notice that \(U^H\) is a weak*-closed subspace of \(U\) by Lemma \ref{closed}.\
    Recall that a metric space is separable if and only if it is second countable, and hence separability passes to subspaces.\ This shows that the submodule \(U^H\) is separable since \(U\) is a separable metric space.\  
    We want to show that the image of \(V^H\) under the injection \(\iota\) is contained in \(U^H\).\ This is true as for every
    \(v\in V^H\) and \(h\in H\) we have \[h\cdot \iota(v)=i(h)\cdot \iota(v)=\iota(i(h)\cdot v)=\iota(v).\]
    Recall that \(V^H\) and \(U^H\) can be considered as \(Q\)-modules as in Definition 
    \ref{quotmod}.\ We are going to show now that \(\iota_{|V^H}\colon V^H\to U^H\) is a \(Q\)-morphism.\ Let \(g H\) be a coset in \(Q\cong \G/H\) and \(v\in V^H\).\ Then, by definition of the action by \(Q\) and using the fact that \(\iota\) is a \(\G\)-morphism, we get \[gH\cdot \iota(v)=g\cdot \iota (v)=\iota(g\cdot v)=\iota(gH\cdot v).\]
    We are left to show that \(\iota_{|{V^H}}\) is a dual \(\G\)-morphism.\
    By Remark \ref{dual}, in order to prove the duality of the \(Q\)-morphism \(\iota_{|V^H}\colon V^H\to U^H\) it is sufficient to show that it is weak-* continuous.\ This is readily verified as \(\iota\) is dual and so weak-* continuous, whence the restriction \(\iota_{|V^H}\colon V^H\to U\) is still weak-* continuous. Moreover since \(U^H\) is weak-* closed, for every weak-* closed subset \(W\subset U^H\), \(W\) is also weak-* closed in \(U\) and so it has closed preimage in \(V^H\) by weak-* continuity of \(\iota_{|V^H}\).\
    This proves that \(\iota_{|V^H}\colon V^H\to U^H\) is a dual, injective, \(Q\)-morphism to a separable coefficient \(Q\)-module which concludes the proof.
\end{proof}

\begin{quest}\label{quest}
    By definition if a group is \X{s}-boundedly acyclic then it is also \X{}-boundedly acyclic.\ It is an open question whether the converse is also true.
\end{quest}
The class of \X{s}-boundedly acyclic groups is strictly larger than the class of amenable groups: just take \(\G\wr \Z\) with \(\G\) non-amenable, cf.\ Theorem \ref{A}.\

\section{Group actions}\label{3}

In this section we collect definitions and results on Zimmer-amenable actions and ergodic actions on standard Borel porobability spaces.\ In particular we will see that one can use \emph{non-singular} (i.e.\ measure class preserving) highly ergodic and Zimmer-amenable actions to compute bounded cohomology of groups.

\subsection{Zimmer-amenable actions}
\begin{df}[Zimmer amenable space]
    Let \(\G\) be a group.\ Let \(\Omega\) be a
standard Borel probability space, equipped with a non-singular \(\G\)-action.\
A \emph{conditional expectation}
\[m\colon L^{\infty}(\G \times \Omega, \R) \to L^{\infty}(\Omega, \R)\]
is a norm one linear map such that:
\begin{enumerate}
    \item \(m(\mathbbm{1}_{\G\times \Omega})= \mathbbm{1}_\Omega\);
    \item For all \(f \in L^{\infty}(\G\times \Omega, \R)\) and every measurable subset \(A \subset \Omega\), it
holds \(m(f \cdot \mathbbm{1}_{\G\times A}) = m(f) \cdot \mathbbm{1}_A\).
\end{enumerate}
We say that \(\Omega\) is a \emph{Zimmer-amenable} \(\G\)-space, if there exists a conditional
expectation \(m\colon L^{\infty}(\G\times \Omega,\R) \to L^{\infty}(\Omega,\R)\) that is moreover \(\G\)-equivariant (on
\(\G\times \Omega\) we consider the diagonal \(\G\)-action).
\end{df}

\begin{exa}[amenable groups]
    A discrete group \(\G\) is amenable if the single point \(\{\ast\}\) is a Zimmer-amenable \(\G\)-space.
\end{exa}

Burger and Monod proved a characterization of Zimmer-amenable actions that is based on the notion of relatively injective modules:

\begin{df}[relatively injective Banach module]
A Banach \(\G\)-module \(V\) is \emph{relatively injective} (with
respect to \(\G\)) if for every \(\G\)-morphism \(\iota \colon A \to B\) of Banach \(\G\)-modules \(A\) and \(B\), that admits a (non necessarily \(\G\)-equivariant) linear retraction
\(\sigma \colon B \to A\) with \(||\sigma||\leq 1\) and \(\sigma\circ\iota = Id_A\),
and for every \(\G\)-morphism \(\alpha\colon A \to V\),
there is a \(\G\)-morphism \(\beta\colon B \to V\) satisfying \(\beta \circ \iota=\alpha\) and \(||\beta||\leq||\alpha||\).
\end{df}

\begin{teo}[{\cite[Theorem 1]{BurMon}}]\label{relinj}
    Let \(\G\) be a countable group and let \(\Omega\) be a
standard Borel probability \(\G\)-space.\ The following assertions are equivalent\begin{enumerate}
    \item The \(\G\)-action on \(\Omega\) is Zimmer-amenable;
    \item The Banach \(\G\)-module \(L^{\infty}_{_{W^*}}(\Omega^{n+1}, V)\) is relatively injective for all
\(n \geq 0\) and every coefficient \(\G\)-module \(V\).
\end{enumerate}
\end{teo}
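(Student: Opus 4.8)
\section*{Proof proposal}

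The plan is to prove the two implications separately, starting with the converse $(2)\Rightarrow(1)$, which is shorter and already reveals which instance of relative injectivity carries the essential content. Here I would specialise to $n=0$ and $V=\R$, so that the hypothesis gives that $L^{\infty}(\Omega,\R)=L^{\infty}_{_{W^*}}(\Omega^{1},\R)$ is relatively injective. Consider the pullback map $\iota\colon L^{\infty}(\Omega,\R)\to L^{\infty}(\G\times\Omega,\R)$, $\iota(f)(g,s)\coloneqq f(s)$, where $\G\times\Omega$ carries the diagonal action; one checks directly that $\iota$ is an isometric $\G$-morphism onto the subalgebra of functions independent of the $\G$-coordinate. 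Evaluation at the identity, $\sigma(F)(s)\coloneqq F(e,s)$, is a norm-one linear retraction with $\sigma\circ\iota=\mathrm{Id}$, where discreteness of $\G$ is used so that $\{e\}\times\Omega$ is a genuine positive-measure slice. Applying relative injectivity to the data $(\iota,\sigma)$ and to the identity morphism $\alpha=\mathrm{Id}$ of $L^{\infty}(\Omega,\R)$ produces a $\G$-morphism $m\coloneqq\beta\colon L^{\infty}(\G\times\Omega,\R)\to L^{\infty}(\Omega,\R)$ with $m\circ\iota=\mathrm{Id}$ and $\|m\|\le 1$. This $m$ is the desired conditional expectation: $\G$-equivariance is built into $\beta$ being a $\G$-morphism, and $m(\mathbbm{1}_{\G\times\Omega})=m(\iota(\mathbbm{1}_\Omega))=\mathbbm{1}_\Omega$ gives the first axiom. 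The only nontrivial point is the module identity $m(f\cdot\mathbbm{1}_{\G\times A})=m(f)\cdot\mathbbm{1}_A$, which I would deduce from Tomiyama's theorem: the norm-one idempotent $\iota\circ m$ onto the subalgebra $\iota(L^{\infty}(\Omega,\R))$ is automatically a conditional expectation in the operator-algebraic sense, hence a module map over that subalgebra.

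For $(1)\Rightarrow(2)$ I would isolate two steps. The first is an absorption lemma: if the $\G$-action on $\Omega$ is Zimmer-amenable, then the diagonal action on $\Omega^{n+1}$ is Zimmer-amenable for every $n\ge 0$. Indeed, a conditional expectation $m\colon L^{\infty}(\G\times\Omega,\R)\to L^{\infty}(\Omega,\R)$ can be applied with the extra coordinates of $\Omega^{n}$ treated as inert parameters, producing a $\G$-equivariant conditional expectation $L^{\infty}(\G\times\Omega^{n+1},\R)\to L^{\infty}(\Omega^{n+1},\R)$; writing $\Omega^{n+1}=\Omega\times\Omega^{n}$ and verifying the diagonal equivariance is the content. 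It therefore suffices to prove the key lemma: whenever $S$ is a Zimmer-amenable $\G$-space, the module $L^{\infty}_{_{W^*}}(S,V)$ is relatively injective for every coefficient module $V$.

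The key lemma I would prove by the Burger--Monod averaging trick. Given $\iota\colon A\to B$ with norm-one retraction $\sigma$, and a $\G$-morphism $\alpha\colon A\to L^{\infty}_{_{W^*}}(S,V)$, form the non-equivariant lift $\phi\coloneqq\alpha\circ\sigma\colon B\to L^{\infty}_{_{W^*}}(S,V)$, which satisfies $\phi\circ\iota=\alpha$ and $\|\phi\|\le\|\alpha\|$. One then equivariantises $\phi$ by averaging the bounded $\G$-family $g\mapsto g\cdot\phi(g^{-1}b)$ against the conditional expectation $m$ on $L^{\infty}(\G\times S,\R)$: for $b\in B$ and $\xi$ in the separable predual $V^b$, the scalar function $(g,s)\mapsto\langle g\cdot[\phi(g^{-1}b)(g^{-1}s)],\,\xi\rangle$ lies in $L^{\infty}(\G\times S,\R)$, and applying $m$ defines the pairing $\langle\beta(b)(\cdot),\xi\rangle\in L^{\infty}(S,\R)$, hence an element $\beta(b)\in L^{\infty}_{_{W^*}}(S,V)$ through duality. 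Then $\|m\|\le 1$ gives $\|\beta\|\le\|\alpha\|$; the $\G$-equivariance of $m$ together with the diagonal action shows $\beta$ is a $\G$-morphism; and $\beta\circ\iota=\alpha$ holds because on the image of $\iota$ the family $g\mapsto g\cdot\phi(g^{-1}\iota a)=\alpha(a)$ is constant in $g$, so $m$ returns it by the first conditional-expectation axiom.

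I expect the main obstacle to be precisely this key lemma, and within it the verification that the averaged assignment $b\mapsto\beta(b)$ genuinely lands in $L^{\infty}_{_{W^*}}(S,V)$: that the pointwise pairings against the predual assemble into a weak-* measurable, essentially bounded $V$-valued map that is independent of the chosen representatives, and that $\beta$ is weak-* continuous, so that it is a \emph{dual} $\G$-morphism in the sense of Remark \ref{dual}. The separability of $V^b$ is what keeps the measurability bookkeeping tractable. The operator-algebraic input (Tomiyama's theorem) in the converse direction is a second, more self-contained, point where care is required.
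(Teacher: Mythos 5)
A preliminary caveat: the paper contains no proof of this statement — it is quoted verbatim from Burger--Monod — so your attempt can only be measured against the original argument, which it in fact reconstructs essentially correctly. Your direction \((2)\Rightarrow(1)\) is the standard one: specialising to \(n=0\), \(V=\R\), feeding the pair \((\iota,\sigma)\) with \(\sigma(F)(s)=F(e,s)\) into relative injectivity of \(L^{\infty}(\Omega,\R)\), and then upgrading the norm-one equivariant projection to a conditional expectation; invoking Tomiyama's theorem for the module identity is legitimate and clean. Two small corrections there: the identity \(m\circ\iota=\mathrm{Id}\), which you use again at the end of the key lemma, follows from \emph{both} axioms together with norm-density of simple functions in \(L^{\infty}(\Omega,\R)\), not from unitality alone; and in the key lemma you impose an unnecessary condition on yourself — relative injectivity, as defined in this paper, only asks for a bounded \(\G\)-morphism \(\beta\), so weak-* continuity of \(\beta\) need not be verified. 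Your key lemma itself (Zimmer-amenability of \(S\) implies relative injectivity of \(L^{\infty}_{_{W^*}}(S,V)\) for every coefficient module \(V\)) is exactly the Burger--Monod averaging argument, and you correctly locate the real work: extending \(m\) to \(V\)-valued functions by pairing against a countable dense subset of the separable predual \(V^b\), choosing representatives coherently on a conull set.

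The one genuine divergence from the original concerns the powers \(\Omega^{n+1}\). You prove an absorption lemma (amenability of \(\Omega\) implies amenability of the diagonal action on \(\Omega^{n+1}\)), whereas Burger--Monod reduce \(n\geq 1\) to \(n=0\) via the exponential-law identification \(L^{\infty}_{_{W^*}}(\Omega^{n+1},V)\cong L^{\infty}_{_{W^*}}\big(\Omega,\, L^{\infty}_{_{W^*}}(\Omega^{n},V)\big)\), noting that \(L^{\infty}_{_{W^*}}(\Omega^{n},V)\) is again a coefficient \(\G\)-module since its predual \(L^1(\Omega^{n};V^b)\) is separable (cf.\ Example \ref{Linfcoef}). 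Your absorption lemma is true, but as sketched it hides a real point: applying \(m\) with the extra coordinates ``inert'' already requires your vector-valued extension machinery (identify \(L^{\infty}(\G\times\Omega^{n+1},\R)\) with \(L^{\infty}_{_{W^*}}(\G\times\Omega,\,L^{\infty}(\Omega^{n},\R))\)), and the module identity \(\tilde m(f\cdot\mathbbm{1}_{\G\times B})=\tilde m(f)\cdot\mathbbm{1}_B\) must then be checked for \emph{arbitrary} measurable \(B\subset\Omega^{n+1}\), not only product sets — this needs a monotone-class argument or, once more, Tomiyama applied to the resulting norm-one projection. The exponential-law route avoids all of this and needs nothing beyond your key lemma applied with a varying coefficient module, so it is the more economical choice; with either route in place, the proposal is sound.
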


Relatively injective modules play an important role in the theory of bounded cohomology, for example they can be used to compute bounded cohomology via strong resolutions. In particular if \(\G\) is a discrete group
and \(V\) a relatively injective coefficient \(\G\)-module, then it holds that \(H^n_b(\G, V)=0\) for
all \(n\geq 1\) {\cite[Corollary 1.5.5]{BurMon}}. This result, together with the characterization of Zimmer-amenable actions (Theorem \ref{relinj}), gives the following corollary:

\begin{cor}\label{relinj0}
    Let \(H\) be a discrete group and let \(\Omega\) be a standard probability space with a Zimmer-amenable \(H\)-action.\ Let \(V\) be a coefficient \(H\)-module.\
    Then \[H_b^n(H,L^{\infty}_{_{W^*}}(\Omega^{k+1},V))=0\] for all \(n\geq 1\) and \(k\geq 0.\)
\end{cor}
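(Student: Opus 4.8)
The plan is to combine the two cited results directly. The key observation is that Corollary \ref{relinj0} follows by feeding the conclusion of Theorem \ref{relinj} into the vanishing statement \cite[Corollary 1.5.5]{BurMon} that a relatively injective coefficient module has vanishing bounded cohomology in all positive degrees.

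\begin{proof}
Fix $k \geq 0$ and a coefficient $H$-module $V$. Since the $H$-action on $\Omega$ is Zimmer-amenable, Theorem \ref{relinj} (applied with $n = k$) tells us that the Banach $H$-module $L^{\infty}_{_{W^*}}(\Omega^{k+1}, V)$ is relatively injective with respect to $H$. First I would note that $L^{\infty}_{_{W^*}}(\Omega^{k+1}, V)$ is itself a coefficient $H$-module: by Example \ref{Linfcoef} it is predual to $L^1(\mu^{\otimes(k+1)}; V^b)$, which is separable since $V^b$ is separable and $\Omega^{k+1}$ is standard Borel. Thus it is a relatively injective coefficient $H$-module, and \cite[Corollary 1.5.5]{BurMon} yields $H^n_b(H, L^{\infty}_{_{W^*}}(\Omega^{k+1}, V)) = 0$ for all $n \geq 1$.
\end{proof}

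There is essentially no obstacle here, as the statement is a formal consequence of the two results recalled immediately before it; the only point requiring a moment of care is the verification that $L^{\infty}_{_{W^*}}(\Omega^{k+1}, V)$ is a genuine coefficient module (so that \cite[Corollary 1.5.5]{BurMon} applies), which is exactly the content of Example \ref{Linfcoef}. One should also take care that the quantifiers match: Theorem \ref{relinj} produces relative injectivity for \emph{all} $n \geq 0$ simultaneously, so a single application suffices to cover every exponent $k+1$ appearing in the corollary, and the degree index $n$ of the bounded cohomology is handled uniformly by \cite[Corollary 1.5.5]{BurMon}.
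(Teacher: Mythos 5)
Your proposal is correct and follows exactly the route the paper takes: the corollary is presented there as an immediate consequence of Theorem \ref{relinj} combined with the vanishing statement \cite[Corollary 1.5.5]{BurMon} for relatively injective coefficient modules. Your additional check that \(L^{\infty}_{_{W^*}}(\Omega^{k+1},V)\) is itself a coefficient module (via Example \ref{Linfcoef}) is a sound piece of citation hygiene that the paper leaves implicit, but it introduces no new idea.
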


\subsection{Ergodic actions} 
\begin{df}[ergodicity with coefficients]
   Let \(\G\) be a discrete group and let \(\Omega\) be a standard Borel
probability space, equipped with a non-singular \(\G\)-action.\ We say that
the action is \emph{ergodic} if every \(\G\)-invariant subset of \(\Omega\) has either zero or full
measure.\ Equivalently,  the \(\G\)-action on \(\Omega\) is ergodic if and only if every essentially
bounded measurable \(\G\)-invariant function \(f\colon \Omega \to \R\) is essentially constant \cite[Proposition 2.1.11]{zimmerergodic}.\\
Let \(V\) be a Banach \(\G\)-module.\ 
We say that the action of \(\G\) on
\(\Omega\) is \emph{ergodic with coefficients} in \(V\) if every essentially bounded, weak*-measurable \(\G\)-equivariant function \(f\colon \Omega \to V \) is essentially constant, i.e.\  \(L^{\infty}_{_W*}(\Omega, V)^{\G}\cong V^{\G}\).
\end{df}

\begin{df}[high ergodicity] Let \(\G\) be a group and let \(\Omega\) be a standard
Borel probability space with a non-singular \(\G\)-action.\ We say that the action
of \(\G\) on \(\Omega\) is \emph{highly ergodic} if the diagonal action \(\G\curvearrowright \Omega^n\) is ergodic for all
integers \(n \geq 1\).
\end{df}

It is a classical fact that double ergodicity implies that all \(\G\)-invariant bounded functions to separable modules are essentially constant.\ Monod showed that this is true also for semi-separable modules \cite[Lemma 5.5]{MonSemi}:

\begin{prop}\label{erg semi-sep}
    Let \(\Omega\) be a standard probability space, and let \(\G\curvearrowright \Omega\) be a measure-class preserving action.\ Let \(V\) be a coefficient \(\G\)-module and let \(m\geq 1\) be an integer.\
    If the diagonal action \(\G\curvearrowright \Omega^{2m}\) is ergodic and \(V\in \) \X{s}, then \(L^{\infty}_{_W*}(\Omega^m;V)^{\G}\cong V^{\G}\).
\end{prop}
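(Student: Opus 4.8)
The plan is to reduce the semi-separable case to the separable one — where the statement is exactly the classical double-ergodicity fact recalled just before the proposition — and then to transport the conclusion back along the defining injection. Since \(V\) is semi-separable, fix a separable coefficient \(\G\)-module \(U\) together with an injective dual \(\G\)-morphism \(i\colon V\to U\). Being a dual morphism, \(i\) is the adjoint of some bounded map \(i_b\colon U^b\to V^b\) between the preduals, so \(\langle i(v),\psi\rangle=\langle v,i_b(\psi)\rangle\) for every \(\psi\in U^b\).

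First I would check that post-composition with \(i\) defines a \(\G\)-equivariant map \(i_*\colon L^{\infty}_{_W*}(\Omega^m;V)\to L^{\infty}_{_W*}(\Omega^m;U)\), \(f\mapsto i\circ f\). Essential boundedness is clear since \(\|i(f(x))\|\le \|i\|_{\textup{op}}\,\|f\|_\infty\), and weak-* measurability of \(i\circ f\) follows from the identity \(\langle i(f(x)),\psi\rangle=\langle f(x),i_b(\psi)\rangle\), which is measurable in \(x\) for every \(\psi\in U^b\) because \(i_b(\psi)\in V^b\) and \(f\) is weak-* measurable. Equivariance is immediate from the equivariance of \(i\) and of the two actions. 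Restricting to invariants gives \(i_*\colon L^{\infty}_{_W*}(\Omega^m;V)^{\G}\to L^{\infty}_{_W*}(\Omega^m;U)^{\G}\), and this restriction is injective: if \(i\circ f=i\circ f'\) almost everywhere then, \(i\) being injective, \(f=f'\) almost everywhere.

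Now take any \(f\in L^{\infty}_{_W*}(\Omega^m;V)^{\G}\). Since the diagonal action on \(\Omega^{2m}=\Omega^m\times\Omega^m\) is ergodic, the \(\G\)-space \(\Omega^m\) is doubly ergodic, so the classical fact applies to the separable module \(U\) and shows that the invariant function \(i\circ f\) is essentially equal to a constant \(u_0\), which is necessarily fixed, \(u_0\in U^{\G}\). Because \(i(f(x))=u_0\) for almost every \(x\) and \(i\) is injective, \(f(x)\) equals the unique preimage \(v_0\coloneqq i^{-1}(u_0)\) for almost every \(x\), so \(f\) is essentially constant; equivariance of \(f\) then forces \(g\cdot v_0=v_0\) for all \(g\in\G\), i.e.\ \(v_0\in V^{\G}\). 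This shows that every invariant element is constant; conversely constants are invariant, and since a constant function has sup-norm equal to the norm of its value, the evaluation map is the desired isometric isomorphism \(L^{\infty}_{_W*}(\Omega^m;V)^{\G}\cong V^{\G}\).

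The main obstacle is precisely what forces the detour through \(U\): for a genuinely semi-separable (non-separable) module the norm need not be weak-* measurable, so the distance function \((x,y)\mapsto\|f(x)-f(y)\|\) underlying the classical separable argument — which is \(\G\)-invariant on \(\Omega^{2m}\) by isometry and hence a.e.\ constant by ergodicity, with that constant forced to vanish by a support argument on the pushforward measure — cannot be formed directly on \(V\). Verifying that \(i\) transports everything faithfully (weak-* measurability via the predual adjoint \(i_b\), and injectivity at the level of equivalence classes of functions) is therefore the crux, and I would take care to note that the value \(u_0\) lies automatically in the image of \(i\), being \(i(f(x))\) a.e., which is what legitimizes the pullback \(v_0=i^{-1}(u_0)\). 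Once this reduction is in place, the separable statement does the real work.
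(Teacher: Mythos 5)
Your argument is correct, and it matches the route behind the paper's own treatment: the paper gives no proof of Proposition \ref{erg semi-sep}, citing it as \cite[Lemma 5.5]{MonSemi}, and your reduction --- push the invariant function forward along the defining injective dual morphism \(i\colon V\to U\), invoke the classical separable double-ergodicity fact for \(U\), and pull the essential value back through \(i\) after observing that it lies in \(i(V)\) --- is precisely that argument, with the two delicate points (weak-* measurability of \(i\circ f\) via the predual adjoint \(i_b\), and the legitimacy of the pullback \(v_0=i^{-1}(u_0)\)) correctly verified. One peripheral remark in your closing paragraph is inaccurate, though it does not affect the proof: since every coefficient module has separable predual, the function \((x,y)\mapsto\|f(x)-f(y)\|\) \emph{is} measurable (it is a supremum over a countable dense subset of the unit ball of the predual), and what actually breaks for norm-nonseparable \(V\) is the covering step of the classical argument --- finding a norm-ball of small radius whose preimage has positive measure, which is what forces the essential constant to vanish --- so that is the true reason the detour through the separable module \(U\) is needed.
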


\begin{rk}[ergodicity and subgroups]\label{subergo}
    Let \(\Gamma\) be a group and let \(\Omega\) be
a standard Borel probability \(\G\)-space.\ Suppose that there exists a subgroup
\(H\leq \G\) such that the restricted action \(H \curvearrowright \Omega\) is ergodic.\ Then, the action
\(\G \curvearrowright \Omega\) is also ergodic.\ Indeed, if all the \(H\)-invariant functions are essentially
constant also the \(\G\)-invariant functions must be so.
\end{rk}

The following example by Monod provides us with a large class of semi-separable coefficient modules.

\begin{lem}\label{linf ssep}
    Let \(\Omega\) be a standard probability space, let \(H\) be a discrete group with a measure preserving action on \(\Omega\) and let \(V\) be a semi-separable coefficient \(H\)-module.\ Then \(L^{\infty}_{_{W^*}}(\Omega;V)\) is a semi-separable coefficient \(H\)-module with the action given by \[g\cdot f(w)=gf(g^{-1}w)\] for every \(g\in H\), for every \(w \in \Omega.\)
\end{lem}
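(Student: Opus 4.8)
The plan is to show that $L^{\infty}_{_{W^*}}(\Omega;V)$ embeds, via an injective dual $H$-morphism, into a separable coefficient $H$-module built out of the witness for the semi-separability of $V$. By hypothesis there is a separable coefficient $H$-module $U$ and an injective dual $H$-morphism $i\colon V\to U$. The natural candidate is to postcompose pointwise, sending a class $f\in L^{\infty}_{_{W^*}}(\Omega;V)$ to the class $i\circ f\in L^{\infty}_{_{W^*}}(\Omega;U)$, and then to invoke the fact (Example \ref{Linfcoef}, via \cite[Corollary 2.3.2]{Mon}) that $L^{\infty}_{_{W^*}}(\Omega;U)$ is itself a coefficient $H$-module with predual $L^1(\mu;U^b)$. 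So the first step is to verify that $L^{\infty}_{_{W^*}}(\Omega;U)$ is not merely a coefficient module but a \emph{separable} one: its predual $L^1(\mu;U^b)$ is separable because $\mu$ is a standard (hence separable) probability measure and $U^b$ is separable, and $L^1$ of a separable measure space with values in a separable Banach space is separable. I would then note that the weak-* topology need not make $L^{\infty}_{_{W^*}}(\Omega;U)$ itself norm-separable, so strictly one must confirm that "separable coefficient module" in the paper's sense refers to separability of the \emph{predual}; reading the definition, a coefficient module is separable exactly when a countable norm-dense subset of the dual exists, so I would instead argue separability of the dual space $L^{\infty}_{_{W^*}}(\Omega;U)$ directly, which is the genuinely delicate point.

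Granting separability of $L^{\infty}_{_{W^*}}(\Omega;U)$, the second step is to check that the map $i_*\colon f\mapsto i\circ f$ is a well-defined, bounded, injective, $H$-equivariant, dual morphism. Well-definedness on equivalence classes and boundedness (indeed $\|i_*\|\le\|i\|$) follow since $i$ is bounded and linear, so $\|i\circ f\|_\infty\le\|i\|\,\|f\|_\infty$; weak-* measurability of $i\circ f$ follows because $i$ is weak-* continuous (Remark \ref{dual}) and $f$ is weak-* measurable. Injectivity follows from the pointwise injectivity of $i$: if $i\circ f=0$ almost everywhere then $f=0$ almost everywhere. Equivariance is a direct computation using the formula $(g\cdot f)(w)=g f(g^{-1}w)$ together with equivariance of $i$, namely $i(g f(g^{-1}w))=g\, i(f(g^{-1}w))$, giving $i_*(g\cdot f)=g\cdot i_*(f)$.

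The third step is to verify that $i_*$ is a \emph{dual} morphism, i.e.\ weak-* continuous, which by Remark \ref{dual} is what "dual" means. Here the cleanest route is to exhibit the predual map explicitly: since $i\colon V\to U$ is dual, it is $i=(j)'$ for some morphism $j\colon U^b\to V^b$ of the preduals, and one checks that $i_*$ is the dual of the map $L^1(\mu;U^b)\to L^1(\mu;V^b)$ given by postcomposition with $j$ (integrating pointwise). Identifying $i_*$ with the transpose of this $L^1$-level map simultaneously re-establishes boundedness and yields weak-* continuity, so I would fold this computation into the argument rather than checking weak-* continuity abstractly.

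I expect the main obstacle to be the first step: confirming that $L^{\infty}_{_{W^*}}(\Omega;U)$ qualifies as a \emph{separable} coefficient module in the precise sense the paper requires. The entire point of introducing semi-separability (Example \ref{Linfcoef} stresses that $L^{\infty}_{_{W^*}}(\Omega,V)$ is \emph{not} separable) is that these $L^\infty$-modules generally fail to be separable, so one must be careful about exactly which separability is being used. If the paper's operative condition is separability of the predual, then separability of $L^1(\mu;U^b)$ closes the argument cleanly; if it is norm-separability of the dual, the claim as literally stated would be false, and the intended reading must be the predual version. I would therefore pin down the definition and phrase the conclusion so that $L^{\infty}_{_{W^*}}(\Omega;U)$ plays the role of the separating separable target, with $i_*$ the witnessing injective dual $H$-morphism, thereby certifying $L^{\infty}_{_{W^*}}(\Omega;V)$ as semi-separable.
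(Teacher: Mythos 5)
Your step two and step three are fine, but your step one fails exactly where you feared, and your proposed fallback reading of the definition is not available. In this paper a coefficient module is \emph{by definition} the dual of a separable Banach module, so predual separability is automatic for every coefficient module; the Definition of \emph{separable module} therefore unambiguously demands a countable norm-dense subset of the module $V$ itself. Under your ``predual version'' reading, every coefficient module would be separable and semi-separability would be a vacuous notion, contradicting the entire point of Section \ref{2}. Consequently $L^{\infty}_{_{W^*}}(\Omega;U)$ cannot serve as the separable witness: as Example \ref{Linfcoef} stresses, it is norm-separable only when $\Omega$ is essentially finite, so your map $i_*\colon f\mapsto i\circ f$, while a perfectly good injective dual $H$-morphism, lands in a \emph{non-separable} coefficient module and does not certify semi-separability of $L^{\infty}_{_{W^*}}(\Omega;V)$.

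The missing idea is one further compression: compose $i_*$ with the injective dual morphism $L^{\infty}_{_{W^*}}(\Omega;U)\hookrightarrow L^2(\Omega;U)$, namely the transpose of the inclusion $L^2(\mu;U^b)\hookrightarrow L^1(\mu;U^b)$ (bounded since $\mu$ is a probability measure). The target $L^2(\Omega;U)$ \emph{is} a separable coefficient module: it is norm-separable because $\Omega$ is standard and $U$ is norm-separable, and it is dual to the separable module $L^2(\mu;U^b)$ because the separable dual $U=(U^b)'$ has the Radon--Nikodym property, giving $(L^2(\mu;U^b))'\cong L^2(\mu;U)$. Note that the measure-preserving hypothesis is precisely what makes the $H$-action on $L^2(\Omega;U)$ isometric; for a merely non-singular action one would need to twist by Radon--Nikodym derivatives. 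For comparison, the paper's own proof does none of this by hand: it simply invokes Monod's general result \cite[Lemma 3.13]{MonSemi}, which gives semi-separability of $L^{\infty}_{_{W^*}}(\Omega;V)$ for actions twisted by a measurable cocycle $\beta\colon H\times\Omega\to H$, and verifies that the action in the statement is the special case of the trivial cocycle $\beta(g,\omega)=g$; the $L^2$-embedding sketched above is essentially the content of that cited lemma. Your remaining verifications --- well-definedness and the bound $\|i_*\|\le\|i\|$, weak-* measurability of $i\circ f$ via weak-* continuity of $i$, almost-everywhere injectivity (which needs separability of $U^b$ to pick a countable norming set), equivariance, and duality of $i_*$ as the transpose of $f\mapsto j\circ f$ on $L^1(\mu;U^b)$ --- are all correct, but they do not close the hole in step one.
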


 \begin{proof}      
    Let be \(V\) is semi-separable coefficient \(H\)-module.\ This is a special case of of a result by Monod
    {\cite[Lemma 3.13]{MonSemi}} which states the semi-separability of \(L_{_W*}^{\infty}(\Omega,V)\) when seen as a coefficient \(H\)-module with the action given by \[g\cdot f(\omega)\coloneqq\beta(g^{-1},\omega)f(g^{-1}\omega)\] where \(g\in H\), \(f\in L_{_W*}^{\infty}(\Omega,V)\) and  \(\beta:H\times \Omega \to H\) is a measurable cocycle.
    In our case we consider the cocycle \begin{eqnarray*}
       \beta\colon H\times \Omega &\to& H\\
        (g,w)&\mapsto& g.
 \end{eqnarray*} Indeed \(\beta\) is measurable as \(\beta^{-1}(g)=g\times \Omega\) for all \(g\in H\) and \[\beta(g_1g_2,w)=g_1g_2=\beta(g_1,g_2\cdot w)\beta(g_2,w)\] for all \(g_1,g_2\in H\) and \(w\in \Omega\).\
 Moreover, the \(H\)-action on \(V\) obtained with this cocycle coincides with the usual action of \(H\) on \(L^{\infty}_{_{W^*}}(\Omega, V)\) as \[\beta(g^{-1},w)^{-1}f(g^{-1}\cdot w)=g \cdot f(g^{-1}\cdot w)\] for all \(g\in H\) and \(f\in L^{\infty}_{_{W^*}}(\Omega, V).\)
 \end{proof}

\section{Bounded acyclicity of permutational wreath product}\label{4}

The goal of this section is to prove Theorem~\ref{A} of the introduction, which states the \X{s}-bounded \(n\)-acyclicity of the permutational wreath product \(\G\wr_XA\) where \(A\) is \X{s}-boundedly \(n\)-acyclic and acts with infinite orbits on \(X\).\ This result extends a previous theorem by Monod that proved the \X{}-bounded acyclicity of wreath products of the form \(\G\wr \Z\) for any group \(\G\) \cite[Theorem 3]{lamp}.\ The extension of Monod's result to the case of permutational wreath product \(\G\wr_X A\) with \(A\) amenable was already mentioned by Monod and proved in detail by Campagnolo, Fournier-Facio, Lodha, and Moraschini \cite[Corollary 3.4]{criterion}.\\
The most relevant difference here is that we are able to remove the hypothesis of amenability and substitute it with the much more general notion of \X{s}-bounded acyclicity.\medskip

\begin{df}[Permutational wreath product]
    Let \(\Gamma\) and \(A\) be groups and let \(X\) be an \(A\)-set.\ The \emph{permutational
wreath product} \(\Gamma\wr_X A\), of \(\G\) with \(A\), is defined as the semi-direct product \((\bigoplus_{x\in X}\Gamma)\rtimes_{\alpha} A,\) where the action \(\alpha\) of \(A\) on the
group \(\bigoplus_{x\in X} \Gamma\) is by permutation of the factors.\ The case \(X=A\) with the
left multiplication action recovers the regular restricted wreath product, or simply \emph{wreath
product} \(\Gamma \wr A\).
\end{df}

Following the approach of Campagnolo, Fournier-Facio, Lodha, and Moraschini, we introduce a similar setup for working with permutational wreath products.

\begin{set}
    [Permutational wreath products with acyclic acting group]\label{setup}
We consider the following setup:
\begin{itemize}
    \item Let \(\Gamma\) and \(A\) be two countable groups, such that \(A\) is \X{s}-boundedly \(n\)-acyclic;
    \item Let \(X\) be a countable \(A\)-set such that all the \(A\)-orbits are infinite;
    \item Let \((Y_0, \mu_0)\) be the standard Borel probability \(\Gamma\)-space obtained by
    endowing \(\Gamma\) with a distribution of full support;
    \item Let \(Y \coloneqq Y_0^X\)
     be the standard Borel probability space endowed with
    the product measure;
    \item The permutational wreath product \(\Gamma \wr_X A\) acts on \(Y\) as follows:
    \(\bigoplus_{x\in X} \Gamma\) acts coordinate-wise and \(A\) acts by shifting the coordinates.
\end{itemize}

\end{set}

\begin{rk}\label{Hssep}
    Set \(H:=\bigoplus_{x\in X}\G\) and \(G\coloneqq \G\wr_X A\).\ In this setup the space \(L^{\infty}_{_W*}(Y;V)^H\) is a semi-separable coefficient \(A\)-module for every \(V\) semi-separable coefficient \(G\)-module.\ 
    Indeed first note that we can identify \(A\) with a subgroup of \(G\), so every coefficient Banach \(G\)-module is also a coefficient Banach \(A\)-module by restriction (Definition
    \ref{quotmod}).\
    Moreover, the action of \(A\) on \(Y\) is measure preserving as it acts by shifting the coordinates.\ By Lemma \ref{linf ssep} the space \(L^{\infty}_{_W*}(Y;V)\) is a semi-separable coefficient \(A\)-module.\ So we are left to show that the inclusion \(L^{\infty}_{_W*}(Y;V)^H\hookrightarrow L^{\infty}_{_W*}(Y;V)\) is dual and \(A\)-equivariant knowing that is dual and \(G\)-equivariant.\ 
    But the \(G\)-action on \(L^{\infty}_{_W*}(Y;V)\) descends to the quotient action of \(A=G/H\) on \(L^{\infty}_{_W*}(Y;V)^H\).\ Thus the inclusion is \(A\)-equivariant.\ 
    
\end{rk}
   
\begin{prop}[Highly ergodic]\label{erg}
    In the situation of Setup \ref{setup}, the action of \(\Gamma\wr_X A\) on \(Y\) is highly ergodic.
\end{prop}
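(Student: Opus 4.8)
The plan is to reduce high ergodicity of the full group action to a Bernoulli-shift ergodicity statement for the single subgroup $A \le \Gamma \wr_X A$, and then to promote it via Remark \ref{subergo}. Recall that high ergodicity means that the diagonal action of $G := \Gamma \wr_X A$ on $Y^n$ is ergodic for every $n \ge 1$. The first move is to reindex the product: since $Y = Y_0^X$, there is a measure-preserving identification $Y^n = (Y_0^X)^n \cong (Y_0^n)^X$, under which the diagonal shift action of $A$ becomes the Bernoulli shift of $A$ on $Z^X$ with base $Z := (Y_0^n, \mu_0^n)$. Crucially, under this identification $A$ acts \emph{trivially on the base} $Z$ and only permutes the $X$-indexed coordinates, via $a \cdot (\xi_x)_x = (\xi_{a^{-1}x})_x$. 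By Remark \ref{subergo} it then suffices to show that this Bernoulli shift of the subgroup $A$ alone is ergodic, and ergodicity of the full group $G$ on $Y^n$ follows for every $n$.

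The core step is thus the ergodicity of the Bernoulli shift $A \curvearrowright Z^X$ under the Setup \ref{setup} hypothesis that every $A$-orbit in $X$ is infinite. I would first record the combinatorial lemma that, when all point-orbits are infinite, every nonempty finite subset $S \subseteq X$ has an infinite setwise $A$-orbit: if the setwise stabilizer of $S$ had finite index, then its pointwise-stabilizing subgroup (of index at most $|S|!$ inside it, hence of finite index in $A$) would give each $x \in S$ a finite orbit, contradicting the hypothesis. Next I would use the orthogonal decomposition $L^2(Z^X) = \overline{\bigoplus_{S \subseteq X \text{ finite}} H_S}$, where $H_S = \bigotimes_{x \in S} L^2_0(Z)$ is the tensor product of the mean-zero parts, on which $A$ acts by $a \cdot H_S = H_{aS}$ (and $H_\emptyset$ is the constants). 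Writing an $A$-invariant $f \in L^2(Z^X)$ as $f = \sum_S f_S$ with $f_S \in H_S$, invariance forces $f_{aS} = a \cdot f_S$ and hence $\|f_{aS}\| = \|f_S\|$ for all $a$; if some $f_S \neq 0$ with $S \neq \emptyset$, the infinitely many distinct subsets in the (infinite) orbit of $S$ each contribute equal positive norm, contradicting $\sum_S \|f_S\|^2 < \infty$. Therefore $f = f_\emptyset$ is essentially constant, which is exactly ergodicity, since any bounded invariant function lies in $L^2$ over the probability space $Z^X$.

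Combining these steps, $A \curvearrowright Y^n$ is ergodic, so by Remark \ref{subergo} the diagonal action of $\Gamma \wr_X A$ on $Y^n$ is ergodic for every $n \ge 1$; that is, the action of $\Gamma \wr_X A$ on $Y$ is highly ergodic.

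I expect the main obstacle to be the Bernoulli ergodicity itself, and specifically the passage from ``all point-orbits infinite'' to ``all nonempty finite subsets have infinite setwise orbit.'' The naive mixing-type argument, which would try to produce $a \in A$ with $aF \cap F = \emptyset$ for a given finite $F$, is delicate for general non-free actions, whereas the tensor-product and square-summability argument sidesteps this cleanly and uses only the setwise-orbit lemma. A secondary point to verify carefully is the measure-theoretic identification $Y^n \cong (Y_0^n)^X$ and that it intertwines the diagonal $A$-shift with the Bernoulli shift on the base $Z$; this is routine via the product-measure structure.
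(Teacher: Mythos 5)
Your proposal is correct, and its outer skeleton coincides with the paper's: both reduce via Remark \ref{subergo} to ergodicity of the subgroup $A$ alone acting on $Y^n$. The difference is in how the Bernoulli-shift ergodicity is obtained. The paper's proof is a one-line citation: it invokes the high ergodicity of the generalized Bernoulli shift for actions with infinite orbits \cite[Proposition 2.15]{criterion} and then applies Remark \ref{subergo}. You instead prove that statement from scratch, and your argument is sound: the rebasing identification $Y^n=(Y_0^X)^n\cong (Y_0^n)^X$ correctly turns the diagonal $A$-action into the generalized Bernoulli shift with base $Z=(Y_0^n,\mu_0^n)$, handling high ergodicity degree by degree; the combinatorial lemma (infinite point-orbits imply infinite setwise orbits of nonempty finite sets, via the pointwise stabilizer of index at most $|S|!$ inside the setwise stabilizer) is correct; and the orthogonal decomposition $L^2(Z^X)=\overline{\bigoplus_S H_S}$ with $H_S=\bigotimes_{x\in S}L^2_0(Z)$, together with $f_{aS}=a\cdot f_S$ and square-summability along an infinite orbit of subsets, is the standard proof of the cited proposition. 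Two small points you handled implicitly and correctly: bounded invariant functions lie in $L^2$ over the probability space, so $L^2$-ergodicity suffices; and the degenerate case where $\Gamma$ is trivial (so $L^2_0(Z)=0$) is vacuously fine. What each approach buys: the paper's version is concise and modular, outsourcing the analytic content; yours is self-contained, makes transparent exactly where the infinite-orbit hypothesis enters (through the setwise-orbit lemma), and avoids any mixing-type argument requiring $aF\cap F=\emptyset$, which as you note is delicate for non-free actions.
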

\begin{proof}
    This follows from the high ergodicity of the generalized Bernoulli shift for actions with infinite orbits \cite[Proposition 2.15]{criterion} together with the high ergodicity of subgroups (Remark \ref{subergo}).
\end{proof}

\begin{prop}[{\cite[Proposition 2.10]{criterion}}]\label{zam}
    In the situation of Setup \ref{setup}.\ The action of \(\bigoplus_{x\in X}\Gamma\) on \(Y\) is Zimmer-amenable.
\end{prop}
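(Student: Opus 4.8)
The plan is to prove Zimmer-amenability of the action $H \curvearrowright Y$, where $H := \bigoplus_{x \in X}\G$ and $Y = Y_0^X$, by building it up from the amenability of the translation action on a single coordinate and then passing to the infinite restricted product via an increasing-union argument. Recall that $Y_0 = \G$ carries a probability measure $\mu_0$ of full support, that $H$ acts coordinate-wise, and that on each coordinate $\G$ acts on $Y_0 = \G$ by left translation. Since $\mu_0$ has full support this coordinate action is measure-class preserving, hence the whole $H$-action on $Y$ is non-singular, as required by the definition of a Zimmer-amenable space. Observe that \emph{no} use of Theorem \ref{relinj} is needed for this statement itself; relative injectivity enters only afterwards, when Zimmer-amenability is fed into Corollary \ref{relinj0} to compute bounded cohomology.

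First I would treat a single coordinate. The left translation action $\G \curvearrowright (\G, \mu_0)$ is transitive with trivial point stabilizers, so it is the homogeneous action on $\G/\{e\}$; as the stabilizer $\{e\}$ is amenable, this action is Zimmer-amenable. Next, for a finite subset $F \subseteq X$ set $H_F := \bigoplus_{x \in F}\G \cong \G^F$ and decompose $Y \cong \G^F \times \prod_{x \notin F} Y_0$. The subgroup $H_F$ acts by the translation action of the group $\G^F$ on itself in the first factor and trivially on the second. The translation action of $\G^F$ on itself is Zimmer-amenable (again transitive with trivial, hence amenable, stabilizer), and since the product of an amenable action with an arbitrary action remains amenable, the action $H_F \curvearrowright Y$ is Zimmer-amenable for every finite $F$.

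It remains to pass from the finitely supported pieces to all of $H$. Writing $H = \bigcup_F H_F$ as the directed union over finite $F \subseteq X$, the action $H \curvearrowright Y$ is an increasing union of the Zimmer-amenable actions $H_F \curvearrowright Y$, and I would conclude by the permanence of amenability under countable directed unions of subgroups. Concretely, let $M$ be the nonempty, convex, weak-* compact set of conditional expectations $L^{\infty}(H \times Y, \R) \to L^{\infty}(Y, \R)$, on which $H$ acts by $(g \cdot m)(f) := g \cdot m(g^{-1}\cdot f)$, so that $g$-equivariance of $m$ is equivalent to $g \cdot m = m$. For each finite $F$ the fixed-point set $M^{H_F}$ is weak-* closed, and it is nonempty: one promotes the $H_F$-equivariant conditional expectation supplied by the amenability of $H_F \curvearrowright Y$ to $L^{\infty}(H \times Y)$ by noting that $H_F$ fixes each coset in $H_F \backslash H$ setwise, applying the expectation coset-by-coset and averaging over the countable coset space with an invariant mean. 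The family $\{M^{H_F}\}_F$ is decreasing and has the finite intersection property, since $M^{H_{F_1}} \cap M^{H_{F_2}} = M^{H_{F_1 \cup F_2}}$; weak-* compactness then yields $M^H = \bigcap_F M^{H_F} \neq \emptyset$, and any element of $M^H$ is an $H$-equivariant conditional expectation, i.e.\ a witness of Zimmer-amenability.

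The main obstacle is precisely this last step: the single-coordinate and finite-product cases are standard permanence properties of amenable actions, whereas combining the infinitely many finite-support amenable actions into one $H$-equivariant conditional expectation is the delicate point, handled by the compactness-and-finite-intersection argument above (equivalently, by the stability of groupoid amenability under increasing unions). The only other care required is the routine measurability bookkeeping in the slice-wise and coset-wise constructions of the conditional expectations, which I would not spell out in detail.
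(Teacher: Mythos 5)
Your proposal is correct, but it is a genuinely different argument from the one the paper relies on: the paper itself gives no proof, quoting \cite[Proposition 2.10]{criterion}, and the proof there (following Monod's lamplighter argument) runs through measured equivalence relations --- the action of \(H=\bigoplus_{x\in X}\G\) on \(Y\) is free, its orbit equivalence relation is the increasing union over finite \(F\subseteq X\) of the smooth subrelations \(\{(y,y')\mid y_x=y'_x \text{ for all } x\notin F\}\), each amenable, and amenability of countable equivalence relations passes to increasing unions (Connes--Feldman--Weiss). You instead work directly with the paper's definition of conditional expectation: the finite stages \(H_F\curvearrowright Y\) follow from standard permanence (simply transitive actions are Zimmer-amenable, and extensions --- here the projection onto the \(\G^F\)-factor --- inherit amenability; indeed \(m(f)(a,z)\coloneqq f(a,(a,z))\) is an explicit witness), and the passage to \(H=\bigcup_F H_F\) is your compactness argument, which is sound: \(M\) sits in the unit ball of \(\big(L^{\infty}(H\times Y)\widehat{\otimes}_{\pi}L^1(Y)\big)'\), the unitality and module conditions are point-weak-* closed, the \(H\)-action is by homeomorphisms since the action on \(L^{\infty}(Y)\) is weak-* continuous, and the sets \(M^{H_F}\) form a directed family of nonempty closed subsets. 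Two minor simplifications: no invariance of the mean on \(H_F\backslash H\) is needed, since \(H_F\) acts trivially on that coset space, and the coset-by-coset averaging can be avoided entirely by precomposing the given expectation with the \(H_F\)-equivariant unital module map of restriction \(L^{\infty}(H\times Y)\to L^{\infty}(H_F\times Y)\). In effect you reprove by hand, at the level of conditional expectations, the stability of Zimmer-amenability under increasing unions of acting groups; this makes the proof self-contained relative to the paper's definitions, at the cost of redoing general theory that the cited reference dispatches with the equivalence-relation machinery.
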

\medskip
In order to prove the upcoming theorem, we will need the following result about spectral sequences in bounded cohomology:

\begin{teo}[Hochschild-Serre spectral sequence {\cite[Proposition 12.2.2]{Mon}}] \label{spec}
    Let \[1\to H \to G \to A \to 1\] be a short exact sequence of discrete groups and let \(V\) be a Banach \(G\)-module.\ Then there exist a first-quadrant spectral sequence \((E_*; d_*)\), called the \emph{Hochschild-Serre
spectral sequence}, converging to the bounded cohomology of \(G\) with coefficients in~\(V\).\\
Moreover, if for every \(q \in \N\) the semi-norm on \(H_b^q(H; V )\) is a
norm, then for each \(p \in \N\) the identification
\[E_2^{pq}\cong H_b^p(A;H_b^q(H;V))\] holds, where \(H_b^q(H;V)\) is a Banach \(A\) module with the conjugation action.
\end{teo}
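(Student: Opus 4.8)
This is a classical result of Monod, and I would establish it via a double complex in the category of Banach $G$-modules, taking care at each step of the non-abelian nature of that category; I record below precisely where the norm hypothesis is genuinely needed.

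The plan is to compute everything from the standard homogeneous resolution $E^\bullet := \ell^\infty(G^{\bullet+1}; V)$ with the usual coboundary. This is a strong resolution of $V$ by relatively injective $G$-modules, and, crucially, it remains a strong resolution by relatively injective modules when the $G$-action is restricted to $H$: each $\ell^\infty(G^{q+1};V)$ is coinduced, hence relatively injective over every subgroup, and the standard contracting homotopy is bounded and linear (not required to be equivariant), hence valid over $H$ as well. Therefore $H_b^q(H;V) = H^q((E^\bullet)^H)$, and since $(-)^G = ((-)^H)^A$ for $H \trianglelefteq G$ with $A = G/H$, the complex $(E^\bullet)^H$ carries a residual $A$-action inducing exactly the conjugation action on cohomology appearing in the statement.

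Next I would form the first-quadrant double complex $D^{p,q} := \ell^\infty(A^{p+1}; (E^q)^H)$, with the inhomogeneous $A$-bar differential in the $p$-direction and the differential of $E^\bullet$ in the $q$-direction, and pass to $A$-invariants. Two verifications are required: that each $(E^q)^H$ is a relatively injective $A$-module, and that $\ell^\infty(A^{\bullet+1}; (E^q)^H)$ is a strong $A$-resolution of it; both follow from the coinduced structure of the $\ell^\infty$-spaces and the explicit homotopy. Granting these, the total complex of $(D^{\bullet\bullet})^A$ admits its two filtration spectral sequences. Filtering so as to take the $A$-direction cohomology first collapses at $E_1$: by relative injectivity $H_b^{p\geq 1}(A;(E^q)^H)=0$ (relatively injective coefficient modules are boundedly acyclic, as recalled after Theorem~\ref{relinj}), while $H_b^0(A;(E^q)^H) = ((E^q)^H)^A = (E^q)^G$, so the total cohomology is $H^q((E^\bullet)^G) = H_b^q(G;V)$. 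This identifies the abutment.

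Filtering the other way, one takes the $E^\bullet$-direction cohomology first. The heart of the argument, and the step where I expect the real difficulty, is to commute the functor $\ell^\infty(A^{p+1}; -)$ past this cohomology so as to obtain $E_1^{p,q} \cong \ell^\infty(A^{p+1}; H_b^q(H;V))$ (equivalently the cochain group $C_b^p(A; H_b^q(H;V))$ after invariants), whence $E_2^{p,q} \cong H_b^p(A; H_b^q(H;V))$. This commutation holds precisely when the space of $q$-coboundaries is closed inside the $q$-cocycles of $(E^\bullet)^H$ --- equivalently when the semi-norm on $H_b^q(H;V)$ is an honest norm, which is exactly the hypothesis; without it the quotient is not even Hausdorff, so $H_b^q(H;V)$ is not a Banach $A$-module and the right-hand side is undefined. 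I therefore expect the main obstacle to be this closed-image/Hausdorffness issue rather than the combinatorics of the bicomplex: since Banach $G$-modules do not form an abelian category, each exactness-type step (relative injectivity of $(E^q)^H$ over $A$, strength of the resolutions, and above all the passage of cohomology through $\ell^\infty(A^{p+1};-)$) must be checked by hand, and it is the last of these that forces the norm assumption.
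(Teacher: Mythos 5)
The paper does not prove this theorem at all: it is quoted directly from Monod's monograph \cite[Proposition 12.2.2]{Mon}, and your double-complex argument is essentially the proof given there, specialized to discrete groups, where \(\ell^\infty\) of sets replaces \(L^\infty_{w*}\) and the measurable-selection issues disappear (pointwise choice of preimages suffices). Your outline is correct, and it correctly isolates the delicate point: the norm hypothesis is used exactly to make coboundaries closed in cocycles so that, via the open mapping theorem, the functor \(\ell^\infty(A^{p+1};-)\) commutes with the cohomology of \((E^\bullet)^H\) --- while the relative injectivity of \((E^q)^H\) over \(A\) follows from the standard inflation trick rather than from coinduction alone.
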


\begin{rk}\label{azioni}
    Recall that the conjugation action by \(A=G/H\) on \(H_b^q(H;V)\) is given by \(gHf(h)=gf(g^{-1}hg)\) for \(gH\in A\) and \(h\in H\).\ In degree zero if \(f\colon~H~\to~V~\in~Ker(\delta^0)\) then \(f\) is constant and \(H\)-invariant.\ Hence \(gH\cdot~ f(h)~=~gf(h)\) for every \(gH\in A\) and every \(h\in H\).\ In particular the \(A\)-action on \(H^0_b(H;V)=V^H\) coincides with the action induced by the \(G\)-action on \(V^H\) to the quotient \(G/H=A\).
\end{rk}

\medskip
We are now ready to prove the main result of this section:

\begin{teo}[Theorem \ref{A}]\label{acyclicwr}
    In the situation of Setup \ref{setup}, the wreath product \(\Gamma\wr_X A\) is \X{s}-boundedly \(n\)-acyclic.
\end{teo}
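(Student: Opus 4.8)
Write $H:=\bigoplus_{x\in X}\Gamma$ and $G:=\Gamma\wr_X A$, so that we have a short exact sequence $1\to H\to G\to A\to 1$ with $A\cong G/H$. Fix an arbitrary semi-separable coefficient $G$-module $V$; the goal is to show $H_b^k(G;V)=0$ for $1\le k\le n$. The whole argument is organised around the single complex $C^q:=L^{\infty}_{_W*}(Y^{q+1},V)$ with the homogeneous coboundary $\delta$. By Proposition \ref{zam} the action $H\curvearrowright Y$ is Zimmer-amenable, so Theorem \ref{relinj} makes each $C^q$ relatively injective over $H$; since $C^\bullet$ is a strong resolution of $V$, its $H$-invariants $M^q:=(C^q)^H=L^{\infty}_{_W*}(Y^{q+1},V)^H$ compute $H_b^*(H;V)$, and by Remark \ref{Hssep} each $M^q$ is a semi-separable coefficient $A$-module.

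Next I would consider the first-quadrant double complex $D^{p,q}:=C_b^p(A;M^q)$, with the bounded $A$-cochain differential in the $p$-direction and the differential induced by $\delta$ in the $q$-direction. Computing vertical ($q$-)cohomology first recovers the Hochschild--Serre spectral sequence of Theorem \ref{spec}, so the total cohomology of $D^{\bullet,\bullet}$ is $H_b^*(G;V)$. I would instead run the \emph{complementary} filtration, taking the bounded $A$-cohomology (the horizontal, $p$-direction) first. Since each $M^q$ is semi-separable and $A$ is \X{s}-boundedly $n$-acyclic, this gives $E_1^{p,q}=H_b^p(A;M^q)=0$ for all $1\le p\le n$, so in the range of interest only the column $p=0$ can survive; there $E_1^{0,q}=(M^q)^A=L^{\infty}_{_W*}(Y^{q+1};V)^G$.

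This is where high ergodicity enters. By Proposition \ref{erg} every diagonal power $G\curvearrowright Y^{m}$ is ergodic, so Proposition \ref{erg semi-sep} (applied with $m=q+1$, using $V\in$ \X{s}) identifies $L^{\infty}_{_W*}(Y^{q+1};V)^G\cong V^G$ for every $q$. Under these identifications the surviving column is the complex $(C^\bullet)^G$, on which the differential is the simplicial coboundary evaluated on constant maps, i.e. $V^G\xrightarrow{0}V^G\xrightarrow{\mathrm{id}}V^G\xrightarrow{0}\cdots$, which is exact in every positive degree. Hence $E_2^{0,q}=H^q\big((C^\bullet)^G\big)=0$ for $q\ge 1$, while $E_2^{0,0}=V^G$. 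Combining the two collapses, every entry $E_2^{p,q}$ with $1\le p+q\le n$ vanishes (for such $(p,q)$ one has $p\le n$, so either $p=0$, killed by high ergodicity, or $1\le p\le n$, killed by $A$-acyclicity), and convergence forces $H_b^k(G;V)=0$ for $1\le k\le n$.

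The main obstacle is not this bookkeeping but the identification in the second paragraph: one must justify that $\mathrm{Tot}\,D^{\bullet,\bullet}$ computes $H_b^*(G;V)$ even though $C^\bullet$ is relatively injective only over $H$ and not over $G$, i.e. that building the Hochschild--Serre spectral sequence from this $H$-injective resolution is legitimate. This forces one to control the functional-analytic fine print: one needs the semi-norm on $H_b^q(H;V)$ to be a norm, so that $H^q(M^\bullet)$ is a genuine coefficient module and $\ell^\infty(A^{p+1},-)$ commutes with vertical cohomology --- precisely the hypothesis under which Theorem \ref{spec} identifies the $E_2$-page --- and one must check that Proposition \ref{erg semi-sep} applies uniformly in $q$, which needs ergodicity of $G\curvearrowright Y^{2(q+1)}$ and is supplied by high ergodicity. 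Once this reduction is secured, the $A$-acyclicity and high-ergodicity collapses are immediate.
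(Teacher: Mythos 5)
Your ingredients are exactly those of the paper's proof --- Zimmer-amenability of \(H\curvearrowright Y\) (Proposition \ref{zam}), semi-separability of \(M^q=L^{\infty}_{_W*}(Y^{q+1};V)^H\) as an \(A\)-module (Remark \ref{Hssep}), the \X{s}-bounded \(n\)-acyclicity of \(A\), and high ergodicity via Propositions \ref{erg} and \ref{erg semi-sep} --- and both of your filtration computations are correct in content. But the gap you flag at the end is genuine, not ``fine print'' that can be controlled: nothing in your argument shows that the total complex of \(D^{p,q}=C_b^p(A;M^q)^A\) computes \(H_b^{*}(G;V)\). Theorem \ref{spec} asserts the existence and convergence of a spectral sequence built from a \emph{specific} complex of Monod's; it does not license substituting a double complex built from the \(Y\)-resolution merely because its vertical-first page would formally resemble \(H_b^p(A;H_b^q(H;V))\) --- and even an agreement of \(E_2\)-pages would not transfer the abutment. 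Worse, the hypothesis you propose to lean on, that the semi-norm on \(H_b^q(H;V)\) is a norm, is unverifiable in the required generality: here \(H=\bigoplus_{x\in X}\Gamma\) with \(\Gamma\) completely arbitrary and \(V\) an arbitrary semi-separable coefficient module, so nothing rules out non-Hausdorff \(H_b^q(H;V)\). And even granting it, that hypothesis is stated in Theorem \ref{spec} only to identify the \(E_2\)-page of Monod's sequence; making \(\ell^{\infty}(A^{p+1};-)^A\) commute with the vertical cohomology of your complex would additionally require bounded linear splittings of cocycles and coboundaries, which you do not have.

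The paper's proof is structured precisely to avoid this identification: it decouples your two filtrations into two separate steps, each covered by a citable black box. First, it applies Theorem \ref{spec} not to \(V\) but to each resolution module \(C^k=L^{\infty}_{_W*}(Y^{k};V)\): by Zimmer-amenability these are relatively injective over \(H\) (Theorem \ref{relinj}), so \(H_b^q(H;C^k)=0\) for \(q\geq 1\) and \(H_b^0(H;C^k)=(C^k)^H\) is Banach (Corollary \ref{relinj0}); the norm hypothesis of Theorem \ref{spec} is then automatic, the spectral sequence has a single non-zero row, and it collapses to \(H_b^p(G;C^k)\cong H_b^p(A;(C^k)^H)=0\) for \(1\leq p\leq n\) --- this is your horizontal collapse, but performed where \(H_b^q(H;-)\) is genuinely computable. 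Second, in place of the total-complex identification, it invokes the general principle that a resolution of \(V\) by boundedly \(n\)-acyclic \(G\)-modules computes \(H_b^{p}(G;V)\) for \(p\leq n\) via \(G\)-invariants \cite[Proposition 2.5.4]{m.rap}, and high ergodicity then identifies \((C^{\bullet})^G\) with \(V^G\xrightarrow{0}V^G\xrightarrow{\mathrm{id}}V^G\to\cdots\) --- your \(p=0\) column. To salvage your one-complex architecture you would essentially have to reprove that proposition; as written, the abutment identification is missing and the norm condition you appeal to is not available.
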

\begin{proof}
    Set \(G:=\Gamma\wr_X A\).\ We need to show that for every semi-separable coefficient \(G\)-module \(E\) and for every \(n\geq 1\) we have \(H_b^n(G;E)=0\). \medskip \\
    \textbf{Claim:} The \(G\)-module \(L^{\infty}_{_W*}(Y^{k},E)\) is Banach and  boundedly \(n\)-acyclic for every \(k\geq 1\).\\
    \textit{Proof of Claim.} From Example \ref{Linfcoef} we know that \(L^{\infty}_{_W*}(Y^{k},E)\) and \(L^{\infty}_{_W*}(Y^{k},E)^G\) are Banach \(G\)-modules for every \(k\geq 1\).\ We have to show that the bounded cohomology \(H_b^n(G;L^{\infty}_{_W*}(Y^{k},E))\) vanishes for all \(n\geq1\) and \(k\geq 1\).\ 
    To this end we use the Hochschild-Serre spectral sequence described in Theorem \ref{spec}, with \(V=L^{\infty}_{_W*}(Y^{k},E)\) for any \(k\in \N_{>0}\) and \(H=\bigoplus_{x\in X} \Gamma\).\
    First we check that \(H_b^q(H;V)\) is Banach, where \(V\) is endowed with the restriction \(H\)-action.\ This is always true.\ Indeed, since the action of \(H\) on \(Y\) is Zimmer-amenable (Proposition \ref{zam}), Theorem \ref{relinj} implies that the \(H\)-module \(L^{\infty}_{_W*}(Y^{k},E)\) is relatively injective for all \(k\geq 1\).\ Hence by Corollary \ref{relinj0} we have for every \(k\geq 1\) \[H_b^q(H;L^{\infty}_{_W*}(Y^{k},E))=\begin{cases}
        0, \quad \text{ if } q> 0\\
        L^{\infty}_{_W*}(Y^{k},E)^{H}, \quad \text{ if } q=0.
    \end{cases}\] 
    Hence \(H_b^q(H;V)\) is Banach for every \(q\geq 0\).\ This shows that we can identify the second page of the 
    Hochschild-Serre spectral sequence as follows (Theorem \ref{spec}):
    \[E_2^{pq}=H_b^p(A;H_b^q(H;V)).\]
    By Remark \ref{Hssep}  have that each \(L^{\infty}_{_W*}(Y^{k},E)^{H}\) is a Banach semi-separable coefficient \(A\)-module for all \(k\geq1\) (Remark \ref{azioni}).\ 
  Now by hypothesis \(A\) is \X{s}-boundedly \(n\)-acyclic and hence 
  \begin{eqnarray*}
    E_2^{pq}&=&H_b^p(A;H_b^q(H;L^{\infty}_{_W*}(Y^{k},E)))\\&=&\begin{cases}
      H_b^p(A;L^{\infty}_{_W*}(Y^k;E)^H), \; \text{ if } q=p=0 \text{ or if } q=0 \text{ and } p>n \\
      0,  \; \text{ if } q>0 \text{ or if } 0< p\leq n \text{ and } q= 0   
  \end{cases}
  \end{eqnarray*}
   Since the spectral sequence has only one non-zero row, the convergence of the spectral sequence results in the isomorphism \begin{eqnarray*}
  E^{p0}=H_b^p(A;H_b^0(H;L^{\infty}_{_W*}(Y^{k},E)))\cong H_b^p(G;L^{\infty}_{_W*}(Y^{k},E))
  \end{eqnarray*} for all \(p\geq1 \) and \(k\geq 1\).\ In particular \[H_b^p(G;L^{\infty}_{_W*}(Y^{k},E))\cong 0\] for \(0<p\leq n.\)
  This concludes the proof of the bounded \(n\)-acyclicity of \(L^{\infty}_{_W*}(Y^{k},E)\) as \(G\)-module for every \(k\in \N_{>0}\), whence we proved the claim.\
    \medskip\\ 
    We can now prove that for every semi-separable coefficient \(G\)-module \(E\), we have \(H_b^p(G;E)=0\) for all \(0<p\leq n.\) \\
    Given a resolution of a coefficient \(G\)-module \(E\) \[0\to E\to I^0\to I^1\to I^2\to \dots\] 
    such that \(I^0,\dots, I^n\) are boundedly \(n\)-acyclic \(G\)-modules, it is known that the cochain complex of \(G\)-invariants
    \[0\to (I^0)^G\to (I^1)^G\to \dots\] 
    computes the bounded cohomology of the group \(G\) with \(E\) coefficients in degrees \(0\leq p\leq n\): 
    \(H^p_b(G;E)\cong H^p(I^{\bullet,G})\) \cite[Proposition 2.5.4]{m.rap}.\
    Consider the exact sequence 
    \[0\to E\to L^{\infty}_{_W*}(Y,E) \to L^{\infty}_{_W*}(Y^2,E)\to \dots.\]
    By Claim, this resolution consists of boundedly \(n\)-acyclic modules and so we have the isomorphism \[H_b^p(G;E)\cong H^p(L^{\infty}_{_W*}(Y^{\bullet};E)^G) \mbox{  for  }  0\leq p\leq n.\]
    Moreover, since the action of \(G\) on \(Y\) is highly ergodic, by Proposition \ref{erg semi-sep} we have \[L^{\infty}_{_W*}(Y^{k};E)^G\cong E^G \mbox{ for all}\; k\geq 1.\] Hence the bounded cohomology of \(G\) with \(E\)-coefficients can be computed in degrees \(0\leq p\leq n\) via the resolution \(0\to E^G\to E^G\to \dots\) where the boundary operators are the identity in odd degrees and identically \(0\) in even degrees.\ This proves that \(H_b^p(G;E)=0\) for all \(1\leq p\leq n\), whence \(G\) is \X{s}-boundedly \(n\)-acyclic.  
\end{proof}

\begin{rk}[less coefficients]\label{sololinf}
By following the proof of Theorem \ref{acyclicwr}, one sees that, given a semi-separable coefficient (\(\G\wr_X A\))-module \(E\), in order to have the vanishing \(H_b^k(\G\wr_XA;E)=0\) for every \(1\leq k\leq n\), it is sufficient to suppose the group \(A\) to be boundedly \(n\)-acyclic only with respect to the coefficient \(L_{_W*}^{\infty}(Y^{k};E)^H\) for every \(k\in \N_{>0}\), where \(H\coloneqq ~\bigoplus_{x\in X}\G\).
\end{rk}

\begin{cor}[modules with no invariants]\label{invariants}
    Let \(A\) be a countable group acting on a countable set \(X\) with infinite orbits and let \(\G\) be any group. Suppose that \(H_b^k(A;V)=0\) for all \(1\leq k\leq n\) and for every semi-separable coefficient \(\G\wr_X A \)-module with \(V^A=0\).\ Then,  \(H_b^k(\G\wr_XA;V)=0\) for all \(1\leq k\leq n\).
\end{cor}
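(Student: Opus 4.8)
The plan is to re-run the proof of Theorem \ref{acyclicwr} verbatim and observe that the only place where the full \X{s}-bounded \(n\)-acyclicity of \(A\) is invoked is in the computation of the \(E_2\)-page of the Hochschild--Serre spectral sequence. Set \(H\coloneqq\bigoplus_{x\in X}\G\) and \(G\coloneqq\G\wr_XA\). Following Remark \ref{sololinf}, to obtain \(H_b^k(G;V)=0\) for \(1\le k\le n\) it suffices to establish that the modules \(L^{\infty}_{_W*}(Y^{k};V)^H\), for \(k\in\N_{>0}\), are boundedly \(n\)-acyclic \(A\)-modules, where \(Y\) is the Bernoulli space of Setup \ref{setup}. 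These are semi-separable coefficient \(A\)-modules by Remark \ref{Hssep}, so by the hypothesis of the corollary it is enough to check that they have \emph{trivial \(A\)-invariants}.

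First I would verify the vanishing of invariants. The key point is that the action of \(G\) on \(Y\) is highly ergodic (Proposition \ref{erg}), hence in particular the diagonal action \(G\curvearrowright Y^{2k}\) is ergodic for every \(k\). Since \(V\) is semi-separable, Proposition \ref{erg semi-sep} gives the isometric identification \(L^{\infty}_{_W*}(Y^{k};V)^{G}\cong V^{G}\). Now \(V^{G}\subseteq V^{A}=0\) because \(A\) embeds in \(G\), so \(V^{G}=0\), and therefore
\[
\big(L^{\infty}_{_W*}(Y^{k};V)^H\big)^A
= L^{\infty}_{_W*}(Y^{k};V)^{G}
= V^{G}
= 0
\]
for every \(k\ge 1\), using that the \(A=G/H\) invariants of the \(H\)-invariants are exactly the \(G\)-invariants (Remark \ref{azioni}).

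With this in hand, the hypothesis of the corollary applies to each module \(L^{\infty}_{_W*}(Y^{k};V)^H\), yielding \(H_b^p(A;L^{\infty}_{_W*}(Y^{k};V)^H)=0\) for all \(1\le p\le n\) and all \(k\ge 1\). This is precisely the input needed to collapse the \(E_2\)-page exactly as in Theorem \ref{acyclicwr}: the Zimmer-amenability of \(H\curvearrowright Y\) (Proposition \ref{zam}) together with Corollary \ref{relinj0} concentrates \(H_b^q(H;L^{\infty}_{_W*}(Y^{k};V))\) in the single row \(q=0\), where it equals \(L^{\infty}_{_W*}(Y^{k};V)^H\); the spectral sequence then gives \(H_b^p(G;L^{\infty}_{_W*}(Y^{k};V))\cong H_b^p(A;L^{\infty}_{_W*}(Y^{k};V)^H)=0\) for \(1\le p\le n\), so the resolution \(0\to V\to L^{\infty}_{_W*}(Y,V)\to L^{\infty}_{_W*}(Y^2,V)\to\cdots\) is by boundedly \(n\)-acyclic modules. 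A final application of high ergodicity collapses the complex of \(G\)-invariants to \(0\to V^G\to V^G\to\cdots\) with alternating identity and zero maps, giving \(H_b^p(G;V)=0\) for \(1\le p\le n\). I do not expect a genuine obstacle here; the only subtlety is bookkeeping of the two distinct roles played by high ergodicity — once to kill the invariants of the coefficient modules feeding into the hypothesis, and once to compute the final resolution — but both reduce to Proposition \ref{erg semi-sep}.
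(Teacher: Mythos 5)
Your proposal is correct and follows essentially the same route as the paper's proof: both reduce via Remark \ref{sololinf} to showing \(H_b^k(A;L^{\infty}_{_W*}(Y^{i};V)^{H})=0\) and establish that these semi-separable coefficient modules have trivial \(A\)-invariants by the ergodicity of \(\G\wr_X A\curvearrowright Y^{i}\) (Propositions \ref{erg} and \ref{erg semi-sep}), via \((L^{\infty}_{_W*}(Y^{i};V)^{H})^{A}=L^{\infty}_{_W*}(Y^{i};V)^{\G\wr_X A}\cong V^{\G\wr_X A}\subseteq V^{A}=0\), so the hypothesis applies. Your write-up merely makes explicit the spectral-sequence collapse that the paper delegates wholesale to Remark \ref{sololinf}, and is in fact slightly more careful than the paper in citing Proposition \ref{erg semi-sep} (rather than bare ergodicity) for the identification \(L^{\infty}_{_W*}(Y^{i};V)^{\G\wr_X A}\cong V^{\G\wr_X A}\) with semi-separable coefficients.
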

\begin{proof}
    By Remark \ref{sololinf} it is sufficient to show that \(H_b^k(A;L_{_W*}^{\infty}(Y^{i};V)^H)=0\), for all \(i\geq 1\) and for all \(1\leq k\leq n\), where \(H\coloneqq ~\bigoplus_{x\in X}\G\), and \(Y\) as in Setup \ref{setup}.\ By ergodicity of the action of \(\G\wr_XA\) on \(Y^i\) (Proposition \ref{erg}), we have \[(L_{_W*}^{\infty}(Y^{i};V)^H)^A=L_{_W*}^{\infty}(Y^{i};V)^{\G\wr_XA}=V^{\G\wr_XA}=(V^H)^A=0.\] This shows that \(L_{_W*}^{\infty}(Y^{i};V)^H\) has no non-trivial \(A\)-invariants, and so by assumption  \(H_b^k(A;L_{_W*}^{\infty}(Y^{i};V)^H)=0\). 
\end{proof}

\begin{rk}[uncountable case]
    As stated in the introduction, Theorem \ref{acyclicwr} can be extended to the case of permutational wreath products \(\G\wr_XA\) where \(\G\) is uncountable via standard techniques \cite[Proposition 12]{lamp}.
\end{rk}

\section{Examples}\label{5}
We present large classes of new examples arising from our main theorem.\ We begin with a straightforward example obtained iterating Theorem \ref{A} (Example \ref{iteratedex}) and continue with more sophisticated examples such as products of groups acting without invariants (Example \ref{noinvariantsex}) where we use Remark \ref{sololinf}, and wreath products with lattices (Example \ref{lattices}), where we use Corollary \ref{invariants}.

\begin{exa}[iteration]\label{iteratedex} The conditions of Theorem \ref{A} allow for iteration.\\ Indeed let \(A\) be a \X{s}-boundedly \(k\)-acyclic group for some \(k\in \N\).\ Let \(X\) be an \(A\)-set with infinite \(A\)-orbits and let \(\{\G_i\}_{1\leq i\leq n}\) be a family of groups.\  
        Then the wreath product \[\G_n\wr_X(\G_{n-1}\wr_X(\cdots\wr_X (\G_2\wr_X(\G_1\wr_XA))\cdots))\]
        is still \X{s}-boundedly \(k\)-acyclic.\ The action of the wreath products on \(X\) is the action of \(A\), that is, for \((g_i,\dots(g_1,a))\in \G_i\wr_X\cdots( \G_1\wr_X~A)\) and \(x\in X\) we have \((g_i,\dots(g_1,a))x=ax\).\ So it has infinite orbits.\ 
\end{exa}

\begin{exa}[wreath product with Thompson F]\label{Thompson F}
     Let us consider Thompson group \(F\) and the set \(X\coloneqq(0,1)\cap \Z[1/2].\) The group \(F\) is \X{s}-boundedly acyclic as indicated by Monod \cite[Section 4.3]{lamp}, and \(F\) acts diagonally on \(X^i\) with finitely many orbits for every \(i\in \N\).\ 
     We can use Theorem \ref{A} to obtain that, for every group \(\G\), the wreath product \(\G\wr_XF\) is \X{s}-boundedly acyclic. 
\end{exa}

\begin{exa}[wreath product with groups of homeomorphisms of the line]
   Let \(H\) be a boundedly supported group of orientation-preserving
    homeomorphisms of the line and assume that \(H\) is proximal (i.e. for all reals \(a < b\) and \(c < d\) there exists \(h \in H\) such that \(h \cdot a < c < d < h \cdot b\) ).\ Then, as proved by Fournier-Facio and Rangarajan \cite[Corollary 5.2]{Ulamff}, there exists a subgroup \(H_0\leq H\) for which 
    \begin{enumerate}
        \item there exists an element \(h \in H\) such that the groups \({h^i
    H_0h^{-i}\colon i \in \Z}\) pairwise commute;
    \item every finite subset of \(H\) is contained in some conjugate of \(H_0\). 
    \end{enumerate} 
    These conditions imply that the group H is boundedly acyclic~\cite[Corollary 5 and Section 4.3]{lamp}.\ Using Theorem \ref{A} we obtain that, for every group \(\G\) and every countable \(H\)-set \(X\) where \(H\) acts with infinite orbits, the wreath product \(\G\wr_XH\) is \X{s}-boundedly acyclic.
\end{exa}

The following remark allows to check the \X{s}-bounded acyclicity of a group just by looking at its bounded cohomology with real coefficients and with invariant coefficients.

\begin{rk}[coefficients with no-invariants or trivial]\label{noinvR}
Let \(H\) be a group and let \(V\) be a coefficient \(H\)-module, the short exact sequence \[0\to V^{H}\to V\to V/V^{H}\to 0\] induces a long exact sequence in bounded cohomology \cite[Theorem 2.31]{m.rap}:
\[0\to H_b^0(H;V^{H})\to H_b^0(H;V) \to H_b^0(H;V/V^{H})\to\cdots\]
Notice that \(V^{H}\) is a closed sub-module and hence the quotient is still a coefficient module (Lemma \ref{closed}).\
Using this long exact sequence we get that the bounded cohomology of \(H\) with coefficients in \(V\) vanishes if it vanishes for \(V^{H}\) and \(V/V^{H}\) coefficients.\ Now using a result by Moraschini and Raptis we have that the vanishing of bounded cohomology with trivial real coefficients implies the vanishing of bounded cohomology for any trivial coefficient module \cite[{Proposition 2}]{addendum}.\ Hence, any \(n\)-boundedly \(\R\)-acyclic group is also \(n\)-boundedly \(V^{H}\)-acyclic.\ Moreover, as shown by Monod, the quotient \(V/V^{H}\) has no \(H\)-invariants \cite[Lemma 1.2.10]{Mon}.\ Hence, a boundedly \(\R\)-acyclic group has trivial bounded cohomology with coefficients in \(V\) if and only if it has trivial bounded cohomology with coefficients in \(V/V^H\).
\end{rk}
From this observation we can obtain new interesting examples of \(n\)-boundedly \X{s}-acyclic groups.

\begin{exa}[wreath product with lattices]\label{lattices}
     Let \(\{G_{i}\}_{i\in I}\) be connected, simply connected, almost \(k_{i}\)-simple algebraic groups, where \(I\) is a finite non-empty set and \(\{k_{i}\}_{i\in I}\) are local fields.\ Set \(n=\min\{\mbox{rank}(G_{i})\vert i\in I\}\) and let \(H<\prod_{i\in I} G_{i}\) be a lattice.\ 
     If \(V\) is a semi-separable coefficient \(H\)-module without \(H\)-invariants, then using a result by Monod \cite[Corollary 1.8]{MonSemi} we have \[H_b^j(H;V)=0 \mbox{ for every } 1\leq j<2n.\]
     By Corollary \ref{invariants}, if \(H\) acts on a countable set \(X\) with infinite orbits, and \(\G\) is any group then for every coefficient \(\G\wr_XH\)-module \(V\) such that \(V^H=0\), we have \begin{equation}\label{eq1}
         [H_b^j(\G\wr_XH;V)=0 \mbox{ for every } 1\leq j<2n.\end{equation}
     
     In degree two something more can be said for semi-separable coefficients.\ 
    For example, if \(k_i\) is non-Archimedean, Monod showed that the continuous bounded cohomology of any algebraic group \(G_{i}\) over \(k_i\) with trivial real coefficients vanishes in every positive degree \cite[Theorem A]{monodflat}.\ 
    Using a result by Monod and Shalom \cite[Corollary 2.11]{MonShal} we can compute the second bounded cohomology of an irreducible lattice \(H<\prod_{i\in I}G_i\) from the continuous second bounded cohomology of the factors \(G_{i}\) when each \(G_i\) is connected and locally compact.\ More precisely we have 
     \begin{equation}\label{eq2}  
     H_b^2(H;\R)\cong \bigoplus_{i\in I}H_{bc}^2(G_{i};\R)=0.\end{equation}
     Now, when \(H<\prod_{i\in I}G_i\) with \(G_i\) connected, simply connected, almost \(k_i\)-simple algebraic group with \(n\geq2\) and \(k_i\) non-Archimedean local fields, we can combine the result for real coefficients and for semi-separable coefficient without \(H\)-invariants.\
     Indeed from Equation \ref{eq1} we have that, if \(n\geq2 \) and \(V\) is a semi-separable coefficient \(H\)-module,
     \[H_b^2(H;V/V^{H})=0.\] Combining this with Equation \ref{eq2}, Remark \ref{noinvR} shows that \(H_b^2(H;V)=0\) for every semi separable coefficient module \(V\).\\
     Theorem \ref{A} in this situation proves the following: for every group \(\G\) and every \(H\)-set \(X\) with infinite \(H\)-orbits, the wreath product \(\G\wr_XH\) is \(V\)-boundedly 2-acyclic.
\end{exa}

Using Remark \ref{sololinf} we show a construction of wreath products \(\G\wr_XA\) that have vanishing bounded cohomology in low degree for some coefficients \(V\), even if \(A\) is not required to be \X{s}-boundedly \(n\)-acyclic (nor \(V\)-boundedly \(n\)-acyclic).

\begin{exa}[direct product]\label{noinvariantsex}
     Let \(\{A_i\}_{1\leq i\leq n} \) be a family of countable groups and let \(A\coloneqq A_1\times \dots \times A_n\).\ 
        Let \(\{X_i\}_{1\leq i\leq n}\) be a family of countable sets such that each \(A_i\) acts on \(X_i\) with infinite orbits.\ The group \(A\) acts component-wise on the product \(X\coloneqq X_1\times \dots \times X_n\) and the factors \(A_i\) act on \(X\) by restriction of the action of \(A\).\
        Let \(\G\) be a countable group endowed with a probability measure of full support and let \(V\) be a semi-separable coefficient \(\G\wr_X A\)-module.\ 
         Set \(H\coloneqq \bigoplus_{x\in X} \G\) with the component-wise action on \(\G^X\).\
        The module \(V^H\) is semi-separable with the induced \(A\)-action (Lemma \ref{ssep inv}).\      
        The restriction of the action of \(A\) on \(V^H\) to each factor \(A_i\curvearrowright V^H\) turns \(V^H\) into a semi-separable coefficient \(A_i\)-module for every \(1\leq i\leq n\).\ Assume that the \(A_i\)-invariants of \(V^H\) consist only of the neutral element, \((V^H)^{A_i}=\{0\}\) for all \(1\leq i\leq n\).
         Since for each action \(A\curvearrowright X\) and \(A_i\curvearrowright X\) the orbits are infinite, the generalized Bernoulli shift actions 
         \(A\curvearrowright \G^X\) and \(A_i\curvearrowright \G^X\) are highly ergodic for every \(1\leq i\leq n\)  \cite[Proposition 2.15]{criterion} and hence also the actions of the wreath products \(\G\wr_XA_i\) on \(\G^X\) are highly ergodic (the action is defined as in Setup \ref{setup}).\ By ergodicity of the action for each \(A_i\), we obtain: 
         \begin{eqnarray*}
             (L^{\infty}_{_W*}((\G^X)^k;V)^H)^{A_i}=L^{\infty}_{_W*}((\G^X)^k;V)^{\G\wr_XA_i}\\=V^{\G\wr_XA_i}=(V^H)^{A_i}=\{0\}.
         \end{eqnarray*}
         
        The triviality of the invariants for each term of the product in \(A\) allows to apply a theorem by Monod stating that \[H_b^k(A;L^{\infty}_{_W*}(\G^X;V)^H)=0\]
        for every \(0< k< 2n\) \cite[Theorem 1.9]{MonSemi}.\\
        Following the proof of Theorem \ref{acyclicwr} as described in Remark
        \ref{sololinf} we obtain the \(V\)-bounded \((2n-1)\)-acyclicity for the wreath product:
        \[H_b^k(\G\wr_XA;V)=0 \mbox{ for all } 0<k<2n.\]
\end{exa}

\section{Verbal products}\label{6}

Moran defined the \emph{verbal products}: a new class of group operations that behave similarly to direct sums and free products and extend Golovin's definition of nilpotent products \cite{mor}.\ This notion provides new ways of constructing groups.\ It is of interest to study whether the properties of the free product and the direct sum are carried over to these more
general operations.\ More precisely we look at \emph{verbal wreath product} and bounded acyclicity. In this section we prove Corollary \ref{B}, which provides classes of verbal wreath products for which the bounded cohomology vanishes in all positive degrees and all semi-separable coefficients.

\begin{df}[verbal subgroup]
    Let \(\mathbb{F}_{\infty}\) be the free group in countable generators \(\{x_i\}_{i\in \N}\), we call \emph{words} the elements \(w\in \mathbb{F}_{\infty}\).\ A word is in \emph{n-letters} if it requires at most \(n\) distinct \(x_i\)'s to be written in a reduced form.\\ Let \(\G\) be a group and let \(w(x_{i_1},\dots, x_{i_n})\) be a word in \(n\)-letters, the \emph{evaluation} of \(w\) at \((g_1,\dots,g_n)\in \G^n\) is the element \(w(g_1,\dots,g_n)\in \G.\)\\ 
    For a non-empty subset \(W\) of words of \(\mathbb{F}_{\infty}\), the associated \emph{verbal subgroup} \(W(\G)\) is the subgroup of \(\G\) generated by the evaluation of all the elements of \(W\) by the
elements of \(\G\).
\end{df}

\begin{df}[verbal product]
    Let \(\{\G_i\}_{i\in I}\) be a countable family of groups.\ Denote by \([\G_i]^{\mathcal{F}}\) the normal closure in the free product \(\mathcal{F}\coloneqq\bigast_{i\in I}\G_i\) of the set \[\big\{[\G_i,\G_j]\big\vert i\neq j \in I\big\}.\]
    Let \(W\subseteq \mathbb{F}_{\infty}\) be a set of words and \(W(\mathcal{F})\) the corresponding verbal subgroup of \(\mathcal{F}\).\
    The \emph{verbal product} of \(\{\G_i\}_{i\in I}\) with respect to \(W\) is the quotient group
    \[\bigast_{i\in I}^{_W}\G_i\coloneqq {\frac{\mathcal{F}}{[\G_i]^{\mathcal{F}}\cap W(\mathcal{F})}} \ \ \Bigg(={\frac{\bigast_{i\in I}\G_i}{[\G_i]^{\mathcal{F}}\cap W(\bigast_{i\in I}\G_i)}}\Bigg). \]
    
\end{df}
 
\begin{exa}
Let \(A\) and \(B\) be groups, 
    \begin{enumerate}
        \item If \(W=\{\}\) is the empty word, then the verbal subgroup \(W(A\ast B)\) associated to \(W\) is the identity of \(A\ast B\).\ The verbal product \(A \ast^{_W} B\) coincides with the free product \(A \ast B\).
        \item Let \(n_1\coloneqq [x_2, x_1]\).\ If \(W = \{n_1\}\) the verbal subgroup associated is the commutator subgroup.\ Then the verbal product gives the direct product of groups \(A\ast^{_W}B = A \times B\).
        \item The words \(n_k 
        \coloneqq[x_{k+1}, n_{k-1}]\) with \(k \in \N_{\geq2}\) recursively yield the lower central series as associated verbal subgroups.\
        If \(W=\{n_k\}\), then the verbal product \(A \ast^{_W} B\) was introduced by Golovin in 1950 \cite{Gol} and it is called \(k\)-\emph{nilpotent product}.\
        \item The words
    \[s_1(x_1, x_2) \coloneqq [x_1, x_2],\]
    \[s_k(x_1,\dots , x_{2^k}) \coloneqq [s_{k-1}(x_1,\dots, x_{2^{k-1}}), s_{k-1}(x_{2^{k-1}+1}, \dots , x_{2^k})],\] recursively yield the derived series as associated verbal subgroup. When \(W=\{s_k\}\), the verbal product \(A \ast^{_W} B\) is called the \(k\)-\emph{solvable product}.
        \item The word \(x^k\) gives the \(k\)-Burnside verbal subgroup, namely the group generated by the \(k^{th}\) power of the elements of the group. If \(W = \{x^k\}\), the verbal product \(A\ast^{_W}B\) is called the \emph{k-Burnside product}.
    \end{enumerate}
\end{exa}

\begin{rk}[projection onto the direct sum]
    Let \(\{\G_i\}_{i\in I}\) be a countable family of groups, and let \(W\) be a set of words in \(\mathbb{F}_{\infty}\).\ The verbal product of the \(\G_i\)'s projects onto the direct sum.\ Indeed the direct sum of countable groups can be defined as the quotient of the free product \(\F\coloneqq \bigast_i\G_i\) by the normal closure of the commutators.\ In symbols \[\bigoplus_{i\in I}\G_i=\frac{\F}{[\G_i]^{\F}}.\] The group homomorphism 
    \begin{eqnarray*}
         p\colon  \frac{\F}{[\G_i]^{\F}\cap W(\F)} &\to& \frac{\F}{[\G_i]^{\F}} 
    \end{eqnarray*}
    that sends the equivalence class of an element \(\gamma\in \F\) to the coset of \(\gamma\) in the codomain, is well defined and surjective as \([\G_i]^{\F}\cap W(\F)\subseteq [\G_i]^{\F}\).\
    
\end{rk}

Here we are interested in verbal wreath products that have been recently extensively studied by Brude and Sasyk \cite{Brude}:

\begin{df}[verbal wreath product]\label{wrw}
    Let \(\G\) and \(A\) be countable groups.\ Let \(W \subseteq \mathbb{F}_{\infty}\) be a set of words and let \(X\) be a countable \(A\)-set.\ Let us denote by \(\F\), the free product of \(|X|\)-copies of \(\G\).\
    There is an action \(\alpha\colon A \curvearrowright \F\) , given by permuting the copies of \(\G\). That
is, if \((g)_{x_1}\) denotes the element \(g\) in the copy \(x_1\) of \(\G\) in \(\F\), then 
    \[\alpha(a)((g)_{x_1}) = (g)_{a\cdot x_1}.\]
Since \(\alpha(a)\) is an automorphism for every \(a\in A\), \([\G_x]^{\F}\) is \(A\)-invariant and \(W(\F)\) is fully invariant, we have that \(W (\F) \cap [\G_x]^{\F}\) is invariant under the action of \(A\).\ Hence, there is a well-defined action \(A \curvearrowright \bigast_X^{_W}\G\). 
The semi-direct
product 
\[\G\wr^{_W}_XH\coloneqq \bigast_{x\in X}^{_W} \G\rtimes_{\alpha} A\]
is called \emph{restricted permutational verbal wreath product} of \(\G\) and \(A\).
\end{df}

We call \(k\)-nilpotent, \(k\)-solvable and \(k\)-Burnside wreath products the restricted permutational wreath products associated to \(k\)-nilpotent, \(k\)-solvable and \(k\)-Burnside products, respectively.

\begin{prop}\label{amker}
    In the setting of Definition \ref{wrw}, the kernel of the projection 
    \[p:\G\wr_X^{_W}A\to \G\wr_XA\]
    is amenable whenever \(W\) is solvable, nilpotent or \(k\)-Burnside for \(k=2,3,4,6\). 
\end{prop}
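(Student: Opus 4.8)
The plan is to identify the kernel explicitly and then exhibit it as a subgroup of a group that is amenable by a classical argument depending on the type of $W$. Write $\F\coloneqq\bigast_{x\in X}\G$, and recall that $B^{_W}\coloneqq\bigast_X^{_W}\G=\F/([\G_x]^{\F}\cap W(\F))$ while $B\coloneqq\bigoplus_X\G=\F/[\G_x]^{\F}$. Since $W(\F)$ is fully invariant and $[\G_x]^{\F}$ is $A$-invariant, the map $p$ is the product of the $A$-equivariant map $q\colon B^{_W}\to B$ induced by $\mathrm{id}_{\F}$ with the identity on $A$; hence $\ker p\cong\ker q$ lies entirely in the base group. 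Computing $\ker q$ from the two presentations yields
\[K\coloneqq\ker q=\frac{[\G_x]^{\F}}{[\G_x]^{\F}\cap W(\F)}.\]

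First I would reduce the amenability of $K$ to that of $\F/W(\F)$. Since $W(\F)$ is normal in $\F$ (being fully invariant), the set $[\G_x]^{\F}W(\F)$ is a subgroup of $\F$ containing $W(\F)$, and the second isomorphism theorem gives
\[K=\frac{[\G_x]^{\F}}{[\G_x]^{\F}\cap W(\F)}\cong\frac{[\G_x]^{\F}W(\F)}{W(\F)}\leq\frac{\F}{W(\F)}.\]
As amenability passes to subgroups, it therefore suffices to prove that $\F/W(\F)$ is amenable in each of the three families.

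The three cases are then handled by the structure of $\F/W(\F)$, which by construction satisfies the law $W$. When $W$ is $j$-nilpotent one has $W(\F)=\gamma_{j+1}(\F)$, so $\F/W(\F)$ is nilpotent; when $W$ is $j$-solvable one has $W(\F)=\F^{(j)}$, so $\F/W(\F)$ is solvable. In both cases $\F/W(\F)$ is amenable, since nilpotent and solvable groups are amenable, and the claim follows. For the $k$-Burnside case with $k\in\{2,3,4,6\}$, the quotient $\F/W(\F)$ is a group of exponent $k$; invoking the positive solution of the Burnside problem for these exponents (Burnside for $k=2,3$, Sanov for $k=4$, Hall for $k=6$), every finitely generated group of exponent $k$ is finite, whence $\F/W(\F)$ is locally finite and therefore amenable.

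The kernel identification and the two reductions via the isomorphism theorems are routine bookkeeping. The genuinely nontrivial input, and the step I would flag as the crux, is the Burnside case: amenability there rests entirely on the deep finiteness theorems for free Burnside groups of exponent $2,3,4,6$, and it is precisely the restriction to these four exponents that makes the statement true — for general $k$ the quotient $\F/W(\F)$ need not be amenable.
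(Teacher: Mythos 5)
Your proof is correct, and it takes a genuinely different and more self-contained route than the paper's. The paper identifies the kernel as \([\G_x]^{\F}/([\G_x]^{\F}\cap W(\F))\) exactly as you do, but then relies on two results of Brude and Sasyk: first, that this kernel embeds into the verbal product \(\bigast_{x\in X}^{_W}\G_x/W(\G_x)\) of the quotients, and second, that countable nilpotent, solvable, or \(k\)-Burnside (\(k=2,3,4,6\)) verbal products of amenable groups are themselves amenable; amenability of each factor \(\G/W(\G)\) then closes the argument. You bypass both of these nontrivial inputs: since \(W(\F)\) is fully invariant, hence normal, the second isomorphism theorem places the kernel directly inside \(\F/W(\F)\), which is a verbal quotient and therefore satisfies the law \(W\) — so it is nilpotent, solvable, or of exponent dividing \(k\), and amenable by classical facts, with the solved cases of the Burnside problem (Burnside, Sanov, Hall) as the only deep ingredient, exactly as you flag. (Note that subgroups of an exponent-\(k\) group have exponent dividing \(k\), and the divisors of \(2,3,4,6\) all remain in the solved range, so local finiteness does hold.) Your route is shorter and makes the logical dependencies transparent; the paper's route yields sharper structural information — the kernel lives in a verbal product of the amenable groups \(\G_x/W(\G_x)\) — which is of independent interest but not needed for the amenability conclusion. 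A minor streamlining of your write-up: you do not actually need the exact identifications \(W(\F)=\gamma_{j+1}(\F)\) and \(W(\F)=\F^{(j)}\); the automatic fact that \(\F/W(\F)\) satisfies the law \(W\) already forces nilpotency of class at most \(j\), solvability of derived length at most \(j\), or exponent dividing \(k\), respectively.
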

\begin{proof}
    Note that the kernel of the projection \(p\) is the subgroup \[[\G_x]^{_W}\coloneqq \frac{[\G_x]^{\F}}{[\G_x]^{\F}\cap W(\F)}.\] As proved by Brude and Sasyk \cite[Theorem 2.16 and Corollary 2.17]{Brude} we have that the quotient \([\G_x]^{_W}\) is isomorphic to a subgroup of the verbal product 
    \[\bigast_{x\in X}^{_W}\frac{\G_x}{W(\G_x)}.\]
    Now if \(W\) is nilpotent, solvable or \(k\)-Burnside for \(k=2,3,4,6\), then the quotient \(\frac{\G}{W(\G)}\) is a nilpotent, solvable or \(k\)-Burnside group, respectively.\ In particular \(\frac{\G}{W(\G)}\) is amenable (\(k\)-Burnside groups are known to be finite when \(k=2,3,4,6\) \cite{Burnside}).\ Since countable verbal products of amenable groups are amenable when the verbal product is nilpotent, solvable or \(k\)-Burnside for \(k = 2, 3, 4, 6\) \cite[Corollary 1.6]{Brude}, the kernel of the projection is amenable. 
\end{proof}

In the theory of bounded cohomology \emph{amenable morphisms} are important:

\begin{df}[amenable group homomorphism]
    A surjective homomorphism \(\phi\colon G\to H\) of groups with kernel
\(K\) is \emph{amenable} if the inflation map
\[H^{\bullet}_b(\phi;I_V)\colon H_b^{\bullet}(H; V^K) \to H_b^{\bullet}(G; V)\]
is an isometric isomorphism for all dual normed \(G\)-modules \(V\). 
\end{df}

\begin{cor}[Corollary \ref{B}]\label{verbalcase}
    Let \(W\subseteq \mathbb{F}_{\infty}\) be nilpotent, solvable or \(k\)-Burnside for \(k=2,3,4,6\).\ Then the projection map \(p:\G\wr_X^{_W}A\to \G\wr_XA\) is an amenable homomorphism.\ In particular if \(A\) is \X{s}-boundedly \(n\)-acyclic and the \(A\)-orbits of elements of \(X\) are infinite, then also \(\G\wr_X^{_W}A \) is \X{s}-boundedly \(n\)-acyclic.
\end{cor}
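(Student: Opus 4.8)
The plan is to prove the two assertions in turn, reducing each to a result already in hand. For the first, write \(G\coloneqq\G\wr_X^{_W}A\) and let \(K\coloneqq\ker p\). Proposition \ref{amker} tells us that \(K\) is amenable in every case under consideration. I would then invoke the classical principle that a normal amenable subgroup induces an isometric isomorphism in bounded cohomology through inflation: for every dual normed \(G\)-module \(V\) the inflation map \(H_b^{\bullet}(G/K;V^K)\to H_b^{\bullet}(G;V)\) is an isometric isomorphism \cite[Corollary 8.5.3]{Mon}. Since by construction \(G/K\cong \G\wr_X A\), this is exactly the statement that \(p\) is an amenable homomorphism. (Alternatively, the isomorphism of graded groups can be produced from the Hochschild--Serre spectral sequence of Theorem \ref{spec}, which collapses to the bottom row because amenability of \(K\) forces \(H_b^q(K;V)=0\) for \(q\geq 1\) and all dual \(K\)-modules; the isometry, however, is cleanest to cite.)

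For the second assertion, set \(H\coloneqq\G\wr_X A\) and let \(V\) be an arbitrary semi-separable coefficient \(G\)-module. A semi-separable coefficient module is in particular a dual module, so the amenability of \(p\) just established yields
\[H_b^k(G;V)\cong H_b^k(H;V^K)\qquad\text{for all }k\geq 0,\]
and it suffices to show the right-hand side vanishes for \(1\leq k\leq n\). Applying Lemma \ref{ssep inv} to the short exact sequence \(1\to K\to G\to H\to 1\), the invariant submodule \(V^K\) is a semi-separable coefficient module for the quotient \(H\cong G/K\). Since \(A\) is \X{s}-boundedly \(n\)-acyclic and acts on \(X\) with infinite orbits, Theorem \ref{acyclicwr} applies to \(H=\G\wr_X A\) and gives \(H_b^k(H;V^K)=0\) for every \(1\leq k\leq n\). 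Feeding this back into the displayed isomorphism yields \(H_b^k(G;V)=0\) for \(1\leq k\leq n\); as \(V\) was arbitrary among semi-separable coefficient modules, \(\G\wr_X^{_W}A\) is \X{s}-boundedly \(n\)-acyclic.

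The only genuine input here is Proposition \ref{amker}: the amenability of the kernel, which rests on the Brude--Sasyk identification of \([\G_x]^{_W}\) with a subgroup of a verbal product of amenable quotients, together with the amenability of nilpotent, solvable and \(k\)-Burnside (\(k=2,3,4,6\)) verbal products. Everything after that is a formal assembly. The point demanding a little attention is the compatibility of the coefficient classes across the two reductions: the inflation isomorphism for amenable kernels must be read in the category of dual modules, while the vanishing supplied by Theorem \ref{acyclicwr} lives in the category of semi-separable coefficient modules. These interlock precisely because every semi-separable coefficient module is dual and, by Lemma \ref{ssep inv}, \(V^K\) remains semi-separable over the quotient---so I expect no genuine obstruction beyond checking that the cited amenable-kernel theorem is stated for the class of dual coefficients used here.
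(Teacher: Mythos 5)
Your proof is correct and follows essentially the same route as the paper: amenability of the kernel via Proposition \ref{amker}, the amenable-kernel inflation isomorphism to identify \(H_b^{\bullet}(\G\wr_X^{_W}A;V)\) with \(H_b^{\bullet}(\G\wr_X A;V^K)\), semi-separability of \(V^K\) over the quotient via Lemma \ref{ssep inv}, and vanishing from Theorem \ref{acyclicwr}. The only difference is cosmetic — you cite Monod's book for the inflation isomorphism where the paper invokes Gromov's mapping theorem with coefficients \cite[Theorem 3.1.4]{m.rap} — and your explicit check that the dual and semi-separable coefficient classes interlock correctly is a point the paper passes over more quickly.
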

\begin{proof}
    By Proposition \ref{amker}, the kernel of the projection \(p\colon \G\wr_X^{_W}A\to \G\wr_XA\) is amenable. A surjective group homomorphism with amenable kernel is an amenable morphism by Gromov's mapping theorem for coefficients \cite[Section 3.1]{Grom82} \cite[Theorem 3.1.4]{m.rap}.\ Now whenever \(A\) is \X{s}-boundedly \(n\)-acyclic the permutational wreath product of \(\G\wr_XA\) is \X{s}-boundedly \(n\)-acyclic (Theorem \ref{A}).\ Since the submodule of invariants of a semi-separable coefficient module is semi-separable for the quotient (Lemma \ref{ssep inv}), using the fact that the projection is an amenable morphism, we obtain \(H_b^p(\G\wr_X^{_W}A;V)=0\) for every semi-separable coefficient module \(V\).
\end{proof}

\section{Application to stable commutator length}\label{7}

\emph{Stable commutator length} (\(\scl\)) is a real-valued invariant of groups
that allows for algebraic, topological, and analytic descriptions \cite{Cal}.\ 
In this section, we prove Theorem \ref{C} of the introduction, which shows the vanishing of \(\scl\) for a large class of permutational verbal wreath products.

\begin{df}[(stable) commutator length]
Given a group \(\G\), the \emph{commutator length} of an element
\(g\) in the commutator subgroup \([\G,\G]\), denoted as \(\cl(g)\), is the least number of commutators in \(\G\) whose product is \(g\).\ 
The \emph{stable commutator length} of \(g\), denoted \(\scl(g)\), is the limit \[\scl(g) \coloneqq \lim_{n\to \infty} \frac{\cl(g^n)}{n}.\] 
\end{df}
Stable commutator length is closely related to quasimorphisms (and second bounded cohomology) via Bavard's Duality Theorem \cite[Theorem 2.70]{Cal}.\ Calegari showed that groups satisfying a law have vanishing stable commutator length \cite{cal2}.\ We are going to use this fact to prove the vanishing of stable commutator length for all verbal wreath products of discrete groups with \X{s}-boundedly \(2\)-acyclic groups.
\begin{df}[law]
    A group \(\G\) obeys a \emph{law} if there is a free group \(F\) and a non-trivial element \(\omega \in F\) such that for every homomorphism
\(\rho\colon F \to \G\), we have \(\rho(\omega)= id\).
\end{df}

\begin{exa}
    Every Abelian group satisfies the law \(xyx^{-1}y^{-1}\), on the other hand, free groups do not satisfy any non-trivial law.
\end{exa}

\begin{rk}\label{prodlaw}
    If the groups \(\G\) and \(H\) satisfy a law then the extension \[1\to \G\to \G'\to H\to 1\] also satisfies a law.\
    Indeed if \(v\) is a law for  \(H\) and \(w\) is a law for \(\G\), then the valuation \(w(v)\), of \(w\) in \(v\) is a law for the extension \(\G'\).\ Moreover, the word \(w(v)\) is non trivial.\ This result comes from the theory of varieties of groups and in particular from the fact that if a product variety is the trivial variety of all groups then one term must be the trivial variety \cite[Chapter 2, 21.21]{HNeu}.
\end{rk}

\begin{df}[(homogeneous) quasimorphism]
    Let \(\G\) be a group, a map \(f \colon\G\to\R\) is a \emph{quasimorphism} if there exists a
constant \(D \geq 0\) such that
\[|f(g_1) + f(g_2) - f(g_1g_2)| \leq D\]
for every \(g_1, g_2 \in \G\).\ The least \(D \geq 0\) for which the above inequality is
satisfied is called the \emph{defect} of \(f\).\\
A quasimorphism \(f \colon \G \to \R\) is \emph{homogeneous} if \(f(g^n) =
n f(g)\) for every \(g \in\G\) and for all \(n\in \Z\). The space of homogeneous quasimorphisms is denoted by \(Q^h(\G; \R)\).
\end{df}

We rewrite Calegari's result on stable commutator length of groups admitting a law in terms of second bounded cohomology as follows:

\begin{prop}\label{law0}
    Let \(\G\) be a group that satisfies a non-trivial law \(\omega\), then we have~\(H_b^2(\G;\R)=0\).
\end{prop}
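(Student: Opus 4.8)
The plan is to connect Calegari's theorem on laws with Bavard's duality, passing through the space of homogeneous quasimorphisms. Calegari's result \cite{cal2} asserts that any group $\G$ obeying a non-trivial law has vanishing stable commutator length, i.e.\ $\scl(g)=0$ for every $g\in[\G,\G]$. The goal is to translate this vanishing of $\scl$ into the vanishing of $H_b^2(\G;\R)$, so the heart of the proof is to relate these two invariants through Bavard duality \cite[Theorem 2.70]{Cal}.

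First I would recall the precise structure of $H_b^2(\G;\R)$. The comparison map $c^2\colon H_b^2(\G;\R)\to H^2(\G;\R)$ has kernel $EH_b^2(\G;\R)$, and a standard fact is that $EH_b^2(\G;\R)\cong Q^h(\G;\R)/H^1(\G;\R)$, the space of homogeneous quasimorphisms modulo genuine homomorphisms. So to kill $H_b^2(\G;\R)$ I must show two things: that $\G$ admits no nontrivial homogeneous quasimorphisms beyond homomorphisms (so $EH_b^2=0$), and that the image of $c^2$ also vanishes. The cleaner route, and the one I expect the author takes, is to observe that a group satisfying a law is in particular a \emph{uniformly perfect}-like or \emph{boundedly generated by commutators} situation on its commutator subgroup; more directly, Bavard duality states that $\scl(g)=\sup_{\phi}\frac{|\phi(g)|}{2D(\phi)}$ where the supremum ranges over homogeneous quasimorphisms $\phi$ of nonzero defect $D(\phi)$. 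If $\scl\equiv 0$ on $[\G,\G]$, then every homogeneous quasimorphism must vanish on $[\G,\G]$, which forces $Q^h(\G;\R)=H^1(\G;\R)$ (every homogeneous quasimorphism is then a homomorphism), whence $EH_b^2(\G;\R)=0$.

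The remaining point is the surjectivity/image part: vanishing of $EH_b^2$ only gives injectivity of the comparison map, so I would still need that the image in $H^2(\G;\R)$ is zero, or argue directly. Here I expect the argument to use that a group obeying a law is \emph{amenable}-adjacent only in a limited sense, so one cannot simply invoke amenability. Instead, the natural move is to use that laws pass to the abelianization-type structure: Calegari's theorem actually yields more than pointwise $\scl=0$; combined with the fact that a law implies the stable commutator length vanishes identically, one deduces that the second \emph{bounded} cohomology with real coefficients is exhausted by $EH_b^2$ when $\G$ has, for instance, vanishing $H^2(\G;\R)$ in the relevant sense, or one invokes that for real coefficients the relevant reduced bounded cohomology $\overline{H}_b^2(\G;\R)$ is isometric to the space of homogeneous quasimorphisms modulo homomorphisms and that $H_b^2$ injects into this via the absence of coboundary issues in degree two for real coefficients.

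The main obstacle I anticipate is precisely this last step: disentangling $H_b^2$ from its exact part $EH_b^2$ and the image under the comparison map. The vanishing of $\scl$ controls quasimorphisms and therefore $EH_b^2$ cleanly via Bavard, but to conclude $H_b^2(\G;\R)=0$ outright one needs the comparison map $c^2$ to be not only injective but zero, i.e.\ $H_b^2=EH_b^2$. I would handle this by noting that for trivial real coefficients in degree two, the relevant object is the space of homogeneous quasimorphisms: more precisely, I would argue that $\scl\equiv0$ together with Bavard duality forces the defect (pseudo-)norm on $H_b^2(\G;\R)$ to be identically zero, and then invoke that this norm is in fact a genuine norm on $H_b^2(\G;\R)$ for real coefficients (Bavard's theorem is an exact duality between $\scl$ and the $H_b^2$ norm restricted to the image of $EH_b^2$), so that vanishing of the norm yields vanishing of the whole group. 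Making this norm-theoretic argument airtight — ensuring no nonzero class survives with zero seminorm — is the delicate part, and I would lean on the precise statement of Bavard duality in \cite{Cal} to close it.
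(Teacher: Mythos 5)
Your first step is correct and matches the paper: Calegari's theorem gives $\scl_{\G}\equiv 0$, Bavard duality then forces every homogeneous quasimorphism to be a homomorphism, and so $EH_b^2(\G;\R)=0$. You also correctly identify the real difficulty: this only makes the comparison map $c^2$ injective, and you must still kill classes with nonzero image in $H^2(\G;\R)$. But your proposed resolution of that difficulty fails. Bavard duality is a duality between $\scl$ and \emph{quasimorphisms}, i.e.\ it controls the norm only on the subspace $EH_b^2(\G;\R)$; the vanishing of $\scl$ gives no control whatsoever on the seminorm of a class with nonzero comparison image, so your claim that ``$\scl\equiv 0$ forces the defect (pseudo-)norm on $H_b^2(\G;\R)$ to be identically zero'' is false. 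Concretely, the implication ``$\scl\equiv 0 \Rightarrow H_b^2(\G;\R)=0$'' that you are trying to establish is wrong in general: a cocompact irreducible lattice $\G$ in a higher-rank Hermitian Lie group (e.g.\ in $Sp(4,\R)$) has $EH_b^2(\G;\R)=0$ by Burger--Monod, hence $\scl_{\G}\equiv 0$ by Bavard, and yet $H_b^2(\G;\R)\neq 0$, since it contains the bounded K\"ahler class. So the law hypothesis must be used a second time, beyond the single application of Calegari's theorem, and your sketch never does this.

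The missing idea, which is how the paper closes exactly this gap, is a central extension trick. Given $[\sigma]\in H_b^2(\G;\R)$, let $1\to\R\to\G'\xrightarrow{p}\G\to 1$ be the central extension associated to the ordinary class $c^2([\sigma])$. Pulling back an extension class to the total group of the extension trivializes it in ordinary cohomology, so $H_b^2(p)([\sigma])\in EH_b^2(\G';\R)$. The crucial point is that $\G'$ \emph{again} satisfies a non-trivial law: if $v$ is a law for $\G$ and $w$ a law for the abelian kernel $\R$, then $w(v)$ is a non-trivial law for the extension (Remark \ref{prodlaw}, which rests on the theory of varieties of groups). Now your first, correct, step applied to $\G'$ gives $EH_b^2(\G';\R)=0$, hence $H_b^2(p)([\sigma])=0$; since surjections induce injective maps on second bounded cohomology, $[\sigma]=0$. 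Without some device of this kind --- one that re-imports the law hypothesis into a group where the given class becomes exact --- your argument cannot be completed.
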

\begin{proof}

    Let \([\sigma]\) be a bounded class in \(H^2(\G;\R)\).\ Then there exists a  central extension 
 \[1\to \R\to \G'\xrightarrow{p} \G \to1 \] associated to \(\sigma\) \cite[Lemma 2.1]{fri}.\ Moreover, it holds \(H^2(p)([\sigma])=0\in H^2(\G';\R)\) \cite[Lemma 2.3]{fri}.\ 
 This implies that the map \[H^2_b(p)\colon H_b^2(\G;\R)\to H_b^2(\G';\R)\] sends \([\sigma]\) to an element of the kernel of the comparison map of \(\G'\), i.e.\ \(H_b^2(p)([\sigma])\in EH_b^2(\G';\R)=\ker c^2_{\G'}\).\ 
 Recall that \cite[Corollary 2.11]{fri} \[EH_b^2(\G';\R)\cong \frac{Q^h(\G';\R)}{\text{Hom}(\G';\R)}.\]
 Now since both \(\G\) and \(\R\) satisfy a non-trivial law, by Remark \ref{prodlaw}
 also the extension \(\G'\) satisfies a non trivial law.\
 By Calegari's theorem, we know that groups satisfying a non-trivial law have vanishing stable commutator length, so in particular \(\scl_{\G'}(g)=0\) for every \(g\in [\G',\G']\) \cite{cal2}.\ 
 By Bavard's Duality Theorem the following equality holds \cite[Theorem 2.70]{Cal}
 \[\scl_{\G}(g)=\frac{1}{2}\sup\Big\{\frac{|\phi(g)|}{D(\phi)}\;\Big\vert \;\phi\in \frac{Q^h(\G';\R)}{\text{Hom}(\G';\R)}\Big\}.\]
 In particular the vanishing of \(\scl_{\G'}\) implies that every homogeneous quasimorphism in \(\G'\) is a homomorphism and hence \(EH^2_b(\G';\R)\cong0\).\ We have shown that the image of \([\sigma]\) under \(H^2_b(p)\) is trivial.\ However, every epimorphism induces an injective map in second bounded cohomology \cite[Theorem 2.17]{fri}. This implies that \([\sigma]~=~0\).
\end{proof}

\begin{teo}[Theorem \ref{C}]\label{sclcase}
    Let \(A\) be a countable group, and let \(X\) be an \(A\)-set such that all the \(A\)-orbits are infinite. 
    Let \(\G\) be any group and let \( W\subseteq \mathbb{F}_{\infty}\) be any non-empty set.\
    If \(A\) is \X{s}-boundedly \(2\)-acyclic, then the stable commutator length of the permutational verbal wreath product \(\G\wr_X^{_W}A\) vanishes. 
\end{teo}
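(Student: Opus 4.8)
The plan is to deduce the vanishing of \(\scl\) on \(G\coloneqq\G\wr_X^{_W}A\) from the vanishing of its second bounded cohomology with real coefficients. As recalled in the proof of Proposition~\ref{law0}, one has \(EH_b^2(G;\R)\cong Q^h(G;\R)/\mathrm{Hom}(G;\R)\), and by Bavard's Duality Theorem \(\scl_G\) vanishes identically on \([G,G]\) if and only if every homogeneous quasimorphism on \(G\) is a homomorphism, i.e.\ \(EH_b^2(G;\R)=0\). Since \(EH_b^2(G;\R)\subseteq H_b^2(G;\R)\), it is enough to prove \(H_b^2(G;\R)=0\). Write \(B\coloneqq\bigast_{x\in X}^{_W}\G\) for the base group, so that \(G=B\rtimes A\), and let \(p\colon G\to\G\wr_X A=:G_0\) be the projection onto the direct sum, with kernel \(K\coloneqq[\G_x]^{_W}=\ker\bigl(B\to\bigoplus_{x\in X}\G\bigr)\). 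Applying Theorem~\ref{A} with real (hence semi-separable) coefficients already gives \(H_b^2(G_0;\R)=0\); it remains to control the kernel.

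The heart of the argument is to show that \(K\) satisfies a non-trivial law. Fix a non-trivial word \(w\in W\). By the Brude--Sasyk structure theorem used in Proposition~\ref{amker}, \(K\) embeds into \(\widehat B\coloneqq\bigast_{x\in X}^{_W}\bigl(\G/W(\G)\bigr)\). Each factor \(\G/W(\G)\) lies in the variety \(\mathrm{var}(W)\) cut out by \(W\), that is \(W(\G/W(\G))=\{e\}\). I claim \(\widehat B\in\mathrm{var}(W)\) as well. Writing \(\overline{\F}\coloneqq\bigast_{x\in X}(\G/W(\G))\) and letting \(N\) denote the normal closure of the mutual commutators of the factors, verbal subgroups are compatible with surjections and direct sums, so the image of \(W(\overline{\F})\) under the projection \(\overline{\F}\to\overline{\F}/N=\bigoplus_{x\in X}\G/W(\G)\) equals \(\bigoplus_{x\in X}W(\G/W(\G))=\{e\}\). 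Hence \(W(\overline{\F})\subseteq N\), whence \(N\cap W(\overline{\F})=W(\overline{\F})\) and \(\widehat B=\overline{\F}/\bigl(N\cap W(\overline{\F})\bigr)=\overline{\F}/W(\overline{\F})\in\mathrm{var}(W)\). Thus \(\widehat B\) satisfies the law \(w\), and therefore so does its subgroup \(K\). By Proposition~\ref{law0} we conclude \(H_b^2(K;\R)=0\).

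Finally I would feed these two vanishing statements into the Hochschild--Serre spectral sequence (Theorem~\ref{spec}) of the extension \(1\to K\to G\to G_0\to1\) with coefficients in \(\R\). In the degrees \(q\le2\) that contribute to \(H_b^2(G;\R)\) we have \(H_b^0(K;\R)=\R\) and \(H_b^1(K;\R)=H_b^2(K;\R)=0\), on which the semi-norm is trivially a norm, so the relevant entries of the second page are identified as \(E_2^{pq}=H_b^p(G_0;H_b^q(K;\R))\). This gives
\[E_2^{2,0}=H_b^2(G_0;\R)=0,\qquad E_2^{1,1}=H_b^1(G_0;H_b^1(K;\R))=0,\qquad E_2^{0,2}=H_b^2(K;\R)^{G_0}=0.\]
As the three graded pieces of the degree-\(2\) filtration of \(H_b^2(G;\R)\) are subquotients of these, \(H_b^2(G;\R)=0\), and hence \(\scl_G\equiv0\) by the reduction of the first paragraph. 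When \(\G\) (or \(X\)) is uncountable one first runs the argument on countable subgroups, on which \(\scl\) is computed, and passes to the colimit.

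The main obstacle is the middle step. For a general \(W\) the kernel \(K\) is \emph{not} amenable, so one cannot simply invoke Gromov's mapping theorem as in the nilpotent, solvable and Burnside cases of Corollary~\ref{verbalcase}; instead one must extract exactly the degree-\(2\) vanishing \(H_b^2(K;\R)=0\), and it is Calegari's law theorem in the form of Proposition~\ref{law0} that supplies it, once \(K\) is shown to satisfy a law via the variety computation above. That this degree matches the degree in which \(\scl\) lives (through Bavard's duality) is precisely what makes the whole scheme close up; note also that a genuine law requires a non-trivial \(w\in W\), which is where the hypothesis on \(W\) enters. A secondary, purely technical point is that the clean identification of the second page in Theorem~\ref{spec} requires the semi-norm to be a norm; this is automatic in the only degrees \(q\le2\) that matter here, since \(H_b^q(K;\R)\) is then either \(\R\) or \(0\), and if desired one can replace the spectral sequence by the low-degree inflation--restriction exact sequence.
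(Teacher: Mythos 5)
Your proposal is correct and follows essentially the same route as the paper: reduce via Bavard duality to showing \(H_b^2(\G\wr_X^{_W}A;\R)=0\), get \(H_b^2(\G\wr_XA;\R)=0\) from Theorem~\ref{acyclicwr}, show the kernel of \(p\) satisfies a non-trivial law so that Proposition~\ref{law0} (Calegari) gives its bounded \(2\)-acyclicity, and transfer across the extension --- where the paper simply cites the mapping theorem \cite[Theorem 4.1.1]{m.rap} in place of your Hochschild--Serre computation, i.e.\ exactly the inflation--restriction alternative you yourself propose. Your detour through the Brude--Sasyk embedding and the variety computation is valid but unnecessary: the second isomorphism theorem gives the law directly, since \(K\cong [\G_x]^{\F}W(\F)/W(\F)\leq \F/W(\F)\), and \(\F/W(\F)\) satisfies every law in \(W\) (your observation that one needs a non-trivial word \(w\in W\) applies equally to the paper's statement).
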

\begin{proof}
    Consider the projection 
    \(p:\G\wr_X^{_W}A\to \G\wr_XA.\) 
    Its kernel is the quotient
    \[[\G_x]^{_W}\coloneqq \frac{[\G_x]^{\F}}{[\G_x]^{\F}\cap W(\F)}.\]
     If \(W\) is non empty, then \([\G_x]^{_W}\) satisfies a non-trivial law and by Proposition \ref{law0}, we know that \[H_b^2([\G_x]^{_W};\R)=0.\] 
     Since the map \(p\) is surjective and has boundedly \(2\)-acyclic kernel, the map
     \[H^p_b(p;\R)\colon H_b^p(\G\wr_XA; \R ) \to H_b^p(\G\wr_X^{_W}A;\R )\]
is an isomorphism for \(p=2\) and injective for \(p=3\) \cite[Theorem 4.1.1]{m.rap}.
In particular whenever \(A\) is \X{s}-boundedly \(2\)-acyclic we obtain \(H_b^2(\G\wr_XA; \R )=0\). In particular the kernel of the comparison map will also be equal to 0: 
\[EH_b^2(\G\wr^{_W}_XA; \R )=0.\] 
This implies that every homogeneous quasimorphism is a group homomorphism \cite[Corollary 2.11]{fri}.
By Bavard's Duality Theorem a group that has no non-trivial homogeneous quasimorphisms has vanishing stable commutator length \cite[Theorem 2.70]{Cal}.
\end{proof}

\section{Application to embeddings}\label{8}

In a recent article by Wu, Wu, Zhao, and Zhou \cite{embed2}, the authors showed that every group of type \(F_n\) embeds quasi-isometrically into a boundedly acyclic group of type \(F_n\) as an offspring of the study of labeled Thompson groups.\ This result extends a previous embedding theorem by Fournier-Facio, L\"oh and Moraschini \cite[Theorem 2]{embed1}.\
In this section we prove Theorem \ref{D} of the introduction in which we use Theorem \ref{A} and the properties of Thompson's group \(F\) to generalize points 1, 4 and 5 of Wu, Wu, Zhao, and Zhou's embedding result \cite[Theorem 0.1]{embed2}.\medskip

We recall that, for any group \(\G\) and for every \(n\in \N\) we say that \(\G\) is of type \(F_n\) if it admits a classifying space with a finite \(n\)-skeleton.\
The group \(\G\) is of type \(FP_n\) if the \(\G\)-module \(\Z\) admits
a projective resolution which is finitely generated in all dimensions \(\leq n\).

\begin{teo}[Theorem \ref{D}]\label{finitenesscase}
    Let \(\G\) be a group of type \(FP_m\).\ Then \(\G\) injects isometrically into a  \X{s}-boundedly acyclic group of type \(FP_m\).
\end{teo}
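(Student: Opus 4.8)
The plan is to take a group $\G$ of type $FP_m$ and embed it into a permutational wreath product of the form $\G \wr_X A$, where $A$ is chosen to be a suitable \X{s}-boundedly acyclic group of type $FP_\infty$ acting on a countable set $X$ with infinite orbits, and then verify both that the embedding is isometric and that the resulting wreath product retains type $FP_m$ while being \X{s}-boundedly acyclic. The natural candidate for $A$ is Thompson's group $F$ acting on $X = (0,1)\cap \Z[1/2]$, as in Example \ref{Thompson F}; $F$ is \X{s}-boundedly acyclic, of type $F_\infty$, and its diagonal action on powers of $X$ has finitely many orbits, so by Theorem \ref{A} (more precisely its iterated/permutational form) the wreath product $\G \wr_X F$ is \X{s}-boundedly acyclic. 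The inclusion $\G \hookrightarrow \bigoplus_{x\in X}\G \hookrightarrow \G\wr_X F$ placing $\G$ in a single coordinate gives the embedding.

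First I would record the bounded acyclicity: by Example \ref{Thompson F} the group $\G\wr_X F$ is \X{s}-boundedly acyclic for every $\G$, so this requirement is immediate once $A=F$ is fixed. Second I would address the finiteness property. Here one uses that type $FP_m$ (respectively $F_m$) is inherited by wreath products $\G\wr_X A$ under suitable hypotheses on $A$ and the action: since $F$ is of type $F_\infty$ and the action on $X$ has finitely many orbits on each $X^k$, the standard results on finiteness properties of wreath products (of the kind used by Wu–Wu–Zhao–Zhou \cite{embed2}, building on the theory of finiteness properties of permutational wreath products) give that $\G\wr_X F$ is of type $FP_m$ whenever $\G$ is. Third I would verify that the embedding $\G\hookrightarrow \G\wr_X F$ is \emph{isometric} in the sense of the inclusion inducing an isometry on the relevant invariant (here the word metric, or the relevant $\scl$/quasimorphism-level isometry that "isometrically" refers to in Theorem \ref{D}): because $\G$ sits as a retract-type subgroup in a single free factor and the conjugates of this copy by elements of $F$ pairwise commute (the orbits being infinite), distances and norms are not distorted.

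The main obstacle I expect is the finiteness-property step rather than the bounded-acyclicity step. Showing that $\G\wr_X F$ is of type $FP_m$ (or $F_m$) requires controlling the homological finiteness of the infinite direct sum $\bigoplus_{x\in X}\G$ as a module over the group ring of $F$, and this is where the combinatorial structure of the $F$-action on $X$ (finitely many orbits on each $X^k$, together with the $F_\infty$ property of $F$) must be leveraged carefully; the delicate point is that $\bigoplus_X \G$ is not finitely generated, so one must use that the wreath product's finiteness follows from a spectral-sequence or Bieri–Eckmann–style argument combining the finiteness of $A=F$ with that of $\G$, exactly as in the embedding results being generalized. A secondary subtlety is pinning down precisely what "isometrically" means in this context and checking it for the chosen embedding; I would handle this by noting that the copy of $\G$ in one coordinate is undistorted and that no nontrivial homogeneous quasimorphism or length can be created by the embedding, so the embedding preserves the relevant metric exactly. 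Once these two points are in place, the conclusion that $\G$ injects isometrically into the \X{s}-boundedly acyclic group $\G\wr_X F$ of type $FP_m$ follows.
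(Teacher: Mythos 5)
Your proposal follows the paper's proof essentially verbatim: the paper likewise takes \(A=F\) (Thompson's group) acting on \(X=(0,1)\cap \Z[1/2]\), obtains the \X{s}-bounded acyclicity of \(\G\wr_X F\) from Example \ref{Thompson F}, deduces type \(FP_m\) from the Bartholdi--de Cornulier--Kochloukova criterion for finiteness properties of permutational wreath products \cite[Proposition 5.1]{fini}, and concludes with the coordinate embedding \(\G\hookrightarrow \G\wr_X F\). The one hypothesis of that criterion you gloss over with ``standard results'' is the finiteness of the point stabilizers, which the paper checks explicitly (\(F_x\cong F\times F\) is of type \(F_{\infty}\)); together with the finitely many orbits on each \(X^i\) this is exactly what makes the finiteness step go through, so your argument is correct modulo that verification.
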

\begin{proof}
    Let us consider Thompson's group \(F\) and the set \(X\coloneqq(0,1)\cap \Z[1/2].\) The group \(F\) acts diagonally on \(X^i\) with finitely many orbits for every \(i\in \Z\).\ Moreover the stabilizers of this action are of type \(F_{\infty}\).\ Indeed for every \(x\in X\), the stabilizer \(F_x\) consists in the set of maps in \(F\) that fix the dyadic point \(x\).\ This group can be identified with the product \(F\times F\),
     which is of type \(F_{\infty}\) \cite[Proposition 1.4.4]{belk}.\ In this setting a result by Bartoldi, de Cornulier and Kochlouva shows that the wreath product \(\G\wr_XF\) is also of type \(FP_m\) \cite[Proposition 5.1]{fini}.\ Moreover, as seen in Example \ref{Thompson F}, the wreath product \(\G\wr_XF\) is \X{s}-boundedly acyclic.\ The isometric embedding \[\G\hookrightarrow \G\wr_XF\] concludes the proof.
\end{proof}

The same result holds for groups of type \(F_m\):

\begin{cor}
    Let \(\G\) be a group of type \(F_m\).\ Then \(\G\) injects isometrically into a  \X{s}-boundedly acyclic of type \(F_m\).
\end{cor}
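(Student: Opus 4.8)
The plan is to recycle verbatim the construction from the proof of Theorem \ref{finitenesscase}. I would again take Thompson's group \(F\) together with the \(F\)-set \(X\coloneqq(0,1)\cap \Z[1/2]\), and consider the permutational wreath product \(\G\wr_X F\). Two of the three properties we need are already established and require no change: the map \(\G\hookrightarrow \G\wr_X F\) is an isometric embedding, and the target \(\G\wr_X F\) is \X{s}-boundedly acyclic by Example \ref{Thompson F} (which only used that \(F\) is \X{s}-boundedly acyclic and acts on each \(X^i\) with finitely many orbits, neither of which depends on the finiteness type of \(\G\)). So the entire content of the corollary reduces to upgrading the finiteness conclusion from \(FP_m\) to \(F_m\).

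For this last point I would invoke the type-\(F_m\) version of the criterion of Bartholdi, de Cornulier and Kochloukova \cite{fini} in place of the \(FP_m\) version used in the theorem. The hypotheses are exactly the ones verified in the proof of Theorem \ref{finitenesscase}: the acting group \(F\) is of type \(F_\infty\); the diagonal action \(F\curvearrowright X^i\) has finitely many orbits for every \(i\); and each point stabilizer \(F_x\) is isomorphic to \(F\times F\), hence of type \(F_\infty\) \cite[Proposition 1.4.4]{belk}. Under these conditions the wreath product \(\G\wr_X F\) inherits type \(F_m\) from \(\G\), which is precisely what is needed to conclude.

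The only genuinely new step, and thus the expected obstacle, is confirming that the geometric finiteness property \(F_m\) — rather than merely the homological property \(FP_m\) — is preserved by this wreath-product construction. For \(m\geq 2\) this is the difference between merely controlling homology and additionally preserving finite presentability; the criterion in \cite{fini} is stated in both forms under the same hypotheses on the orbits and stabilizers, so no extra work beyond citing the correct version is required. Once that is in place, the isometric embedding \(\G\hookrightarrow \G\wr_X F\) into the \X{s}-boundedly acyclic group \(\G\wr_X F\) of type \(F_m\) finishes the argument exactly as before.
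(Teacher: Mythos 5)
Your construction is identical to the paper's --- the same wreath product \(\G\wr_X F\) with \(X=(0,1)\cap\Z[1/2]\), the same isometric embedding \(\G\hookrightarrow\G\wr_X F\), and the same appeal to Example \ref{Thompson F} for \X{s}-bounded acyclicity --- so the entire content of the corollary is, as you say, the upgrade from \(FP_m\) to \(F_m\). But that is precisely the step you do not actually prove. You flag finite presentability as ``the expected obstacle'' and then dispose of it by asserting that the criterion of \cite{fini} ``is stated in both forms under the same hypotheses.'' That assertion is the whole question: the result invoked in Theorem \ref{finitenesscase} is \cite[Proposition 5.1]{fini}, a homological (\(FP_m\)) statement, and preservation of type \(F_m\) does not follow formally from preservation of \(FP_m\). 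For \(m\geq 2\) one must separately establish that \(\G\wr_X F\) is finitely presented when \(\G\) is, and finite presentability of permutational wreath products is an independent and subtler matter: by de Cornulier's characterization it requires, beyond finite presentability of the factors, finitely many orbits for the diagonal action on \(X\times X\) and finitely generated point stabilizers. These conditions do hold here (the \(F\)-action on every \(X^i\) has finitely many orbits, and stabilizers are \(F\times F\), of type \(F_\infty\)), but they must be fed into the right theorem, not into the \(FP_m\) criterion.

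The paper closes the gap as follows, and your proof should too unless you can verify your citation claim: for \(m=1\) the statement is the elementary fact that a wreath product of finitely generated groups over an action with finitely many orbits is finitely generated; for \(m\geq 2\) it uses the characterization that a group is of type \(F_m\) if and only if it is finitely presented and of type \(FP_m\), obtains finite presentability of \(\G\wr_X F\) from \cite[Theorem 1.1]{finpres}, and obtains \(FP_m\) from Theorem \ref{finitenesscase}. So your argument is correct in substance and in all the surrounding steps, but as written the one genuinely new step of the corollary rests on an unchecked attribution to \cite{fini}; either verify that an \(F_m\)-version of Proposition 5.1 really appears there under these hypotheses, or replace that sentence with the splitting \(F_m=\) (finitely presented) \(+\) \(FP_m\) and the citation of \cite{finpres}.
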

\begin{proof}
    If \(m=1\) the statement is trivial as wreath product of finitely generated groups with finitely many orbits is finitely generated.\ If \(m\geq 2\) we use the fact that a group is of type \(F_m\) if and only if it is finitely generated and of type \(FP_m\).\ With the notation used in the proof of Theorem \ref{finitenesscase}, if \(\G\) is finitely presented, then \(\G\wr_XF\) is also finitely presented \cite[Theorem 1.1]{finpres}.\ Using Theorem \ref{finitenesscase} we get that \(\G\wr_XF\) is of type \(FP_m\), hence the statement.
\end{proof}
\begin{rk}
The above results hold for any \(FP_m\) and \(F_m\) with \(m\in \N\cup \{\infty\}\).\ As pointed out to me by Xiaolei Wu, the cases of \(FP_m\) and \(F_m\) with finite \(m\) could be obtained using Monod's result \cite[Theorem 3]{lamp} and the Houghton group \(\mathcal{H}_{m+1}\) in place of \(F\).   
\end{rk}

\begin{rk}
    If the group \(\G\) has infinite abelianization then the wreath product \(\G\wr_XF\) is of type \(FP_m\) (respectively \(F_m\)) if and only if \(\G\) is of type \(FP_m\) (respectively \(F_m\)) \cite[Theorem A]{fini}.
\end{rk}

\bibliographystyle{alpha}
\bibliography{ref_project}

\end{document}